\documentclass[11pt, a4paper]{amsart}
\usepackage[english]{babel}
\usepackage{amssymb,amsmath,amsthm,mathrsfs,bm}
\usepackage{color}
\usepackage[margin=1in]{geometry}
\usepackage{stmaryrd}

\newtheorem{theorem}[subsection]{Theorem}
\newtheorem{conjecture}[subsection]{Conjecture}

\newtheorem{lemma}[subsection]{Lemma}
\newtheorem{remark}[subsection]{Remark}
\newtheorem{proposition}[subsection]{Proposition}
\newtheorem{corollary}[subsection]{Corollary}

\newtheorem*{assumption*}{Assumption}
\newtheorem*{claim*}{Claim}
\newtheorem*{theorem*}{Theorem}
\newenvironment{example}{\smallskip \refstepcounter{subsection}
{\bf Example \thesubsection.}}

\def\bal{\begin{aligned}}
\def\eal{\end{aligned}}
\def\be{\begin{equation}}
\def\ee{\end{equation}}
\def\bcs{\begin{cases}}
\def\ecs{\end{cases}}
\def\={\;=\;}
\def\+{\,+\,}
\def\-{\,-\,}
\def\b{\backslash}
\def\C{{\mathbb C}}
\def\Z{{\mathbb Z}}

\def\Q{{\mathbb Q}}

\def\pow#1{\llbracket #1\rrbracket}



\def\v#1{{\bf #1}}

\def\is{\equiv}

\def\mod#1{({\rm mod}\ #1)}

\def\ceil#1{\lceil #1\rceil}
\def\floor#1{\lfloor #1\rfloor}

\parindent=0pt

\title{Supercongruences using modular forms}
\author{Frits Beukers}

\address{Utrecht University }
\email{f.beukers@uu.nl}

\date{\today}
\begin{document}
\maketitle

\begin{abstract}
Let $F(t)$ be the generating series of some combinatorially interesting numbers,
like the Ap\'ery numbers.
For any prime $p$ we let $F_p(t)$ be the sum of the terms of degree $<p$ (the $p$-truncation
of $F$). It is a classical fact that for many $p$ and algebraic arguments $\alpha$
we have the congruence $F_p(\alpha)\is\theta\mod{p}$, where $\theta$ is a zero of the
characteristic $p$ $\zeta$-function of an algebraic variety associated to $F$ and $\alpha$.
 
Surprisingly, very often these congruences turn out to hold modulo
$p^2$ or even $p^3$. We call such congruences supercongruences and in the past 15 years
an abundance of them have been discovered. In this paper we show that a large proportion
of them can be explained by the use of modular functions and forms.
\end{abstract}

\section{Introduction}
A motivating example for this paper is the so-called \emph{Legendre example}.
Let $p$ be an odd prime and let $\Z_p$ be the ring of $p$-adic integers. Define
the \emph{Hasse polynomial}
$$
H_p(t):=\sum_{k=0}^{p-1}\binom{-1/2}{k}^2t^k.
$$
It is well-know, see \cite{Deu41}, that for any $t_0\in\Z_p$ we have
the congruence $H_p(t_0)\is a_p\mod{p}$ where $a_p=p-N_p$ and
$N_p$ is the number of solutions $x,y\mod{p}$ to the congruence
$$
y^2\is x(x-1)(x-t_0)\mod{p}.
$$
The family of elliptic curves $y^2=x(x-1)(x-t)$ parametrized by $t$ is known as
the \emph{Legendre family}. It is the toy example of many books on algebraic geometry. 
When $t_0=0,1$ we have $a_p=1$ and $a_p=(-1)^{(p-1)/2}$ respectively.
Let us assume $t_0\not\is0,1\mod{p}$.
A famous result of Hasse states that $|a_p|<2\sqrt{p}$, which implies that the
local $\zeta$-function $X^2-a_pX+p$ has two complex zeros of absolute value $\sqrt{p}$. 
Suppose $a_p\not\is0\mod{p}$. Then $X^2-a_pX+p$ has a unique $p$-adic zero $\vartheta_p\in\Z_p$
with $|\vartheta_p|_p=1$, which is called the \emph{($p$-adic) unit root} of $X^2-a_pX+p$. In this case
our congruence can be rephrased as $H_p(t_0)\is\vartheta_p\mod{p}$. To avoid cluttering of the notation
we have suppressed the dependence of $a_p$ and $\vartheta_p$ on $t_0$ in our notation.

It turns out that for certain values of $t_0$ we get $H_p(t_0)\is\vartheta_p\mod{p^2}$
for infinitely many primes $p$. This is a congruence modulo $p^2$ instead of $p$. Here is an example,

\begin{theorem}\label{legendre-example}
We have
\begin{itemize}
\item[--]
For all odd primes $p$: $H_p(1)\is(-1)^{(p-1)/2}\mod{p^2}$.
\item[--]
For all primes $p\is1\mod4$: $H_p(-1)\is a+bi\mod{p^2}$, where $a,b$ are integers such that
$p=a^2+b^2$, $a\is(-1)^{(p-1)/4}\mod{4}$ and the sign of $b$ is such that $a+bi$ is a $p$-adic unit. 
\end{itemize}
\end{theorem}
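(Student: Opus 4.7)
My plan begins by rewriting the Hasse polynomial via the identity $\binom{-1/2}{k}=(-1)^k\binom{2k}{k}/4^k$, giving
\[ H_p(t) = \sum_{k=0}^{p-1} \binom{2k}{k}^2 \frac{t^k}{16^k}, \]
the $p$-truncation of $F(t):={}_2F_1(1/2,1/2;1;t)$, a period of the Legendre family. Under the classical modular parametrization $t=\lambda(\tau)$, this period equals $\theta_3(\tau)^2$, a weight-one modular form on $\Gamma(2)$; this is the bridge to modular forms I would exploit.

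\textbf{Case $t_0=1$ (cusp).} A Lucas-theorem computation shows $p\mid\binom{2k}{k}$ for $(p-1)/2<k<p$, hence $\binom{2k}{k}^2\equiv 0\pmod{p^2}$ there, so modulo $p^2$ the truncation collapses to $\sum_{k=0}^{(p-1)/2}\binom{-1/2}{k}^2$. I would then prove this equals $(-1)^{(p-1)/2}\pmod{p^2}$ by applying Vandermonde's identity $\sum_{k=0}^{n}\binom{-1/2}{k}\binom{-1/2}{n-k}=\binom{-1}{n}=(-1)^n$ with $n=(p-1)/2$, and then controlling the error term $\binom{-1/2}{k}^2-\binom{-1/2}{k}\binom{-1/2}{(p-1)/2-k}$ via the reflection symmetry $\binom{-1/2}{k}\equiv\binom{-1/2}{(p-1)/2-k}\pmod p$ (from $-1/2\equiv(p-1)/2\pmod p$) together with a pairing of indices $k\leftrightarrow(p-1)/2-k$. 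Modularly this corresponds to $\theta_3^2$ at the cusp $\lambda=1$ having $q$-expansion $1+O(q)$, whose associated unit root is the Dirichlet value $(-1)^{(p-1)/2}$.

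\textbf{Case $t_0=-1$ (CM).} The fibre $y^2=x(x-1)(x+1)=x^3-x$ is the CM elliptic curve with $\mathrm{End}\cong\Z[i]$ and $j=1728$. For $p\equiv 1\pmod 4$, write $p=\pi\bar\pi$ in $\Z[i]$ with $\pi=a+bi$ the Frobenius eigenvalue that is a $p$-adic unit; then $a_p=2a$ and the mod-$p$ congruence $H_p(-1)\equiv\pi\pmod p$ is classical. To upgrade to mod $p^2$ I would use that $t_0^p=(-1)^p=-1=t_0$, so Dwork's iterative unit-root congruences degenerate at this fixed point and pin $H_p(-1)\equiv\vartheta_p\pmod{p^2}$. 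The modular-form interpretation then forces $\vartheta_p=\pi$ exactly: at the CM point $\tau_0=(1+i)/2$ the form $\theta_3(\tau_0)^2$ is a special value of a weight-one CM modular form attached to the Gr\"ossencharakter of $\Q(i)$, hence lies in $\Z[i]$. Finally the sign normalization $a\equiv(-1)^{(p-1)/4}\pmod 4$ is pinned down by an explicit evaluation of $\theta_3(\tau_0)$ through Jacobi's triple product.

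\textbf{Main obstacle.} The cusp case reduces to essentially elementary binomial manipulation. The genuine work lies in the CM case: the jump from mod $p$ to mod $p^2$ truly uses the modular interpretation, either through an Atkin-Swinnerton-Dyer congruence for the weight-one theta series of $\Q(i)$, or through the analytic behavior of Dwork's unit-root Frobenius in a $p$-adic disk around $t=-1$. Fixing the $4$-adic sign of $a$ is a secondary but delicate final calculation.
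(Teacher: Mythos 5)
Your cusp case $t_0=1$ is essentially correct and is a genuinely different, more elementary route than the paper's: the paper derives $H_p(1)\is(-1)^{(p-1)/2}\mod{p^2}$ from the functional congruence $F(t)\is F_p(t)F(t^\sigma)\mod{p^2}$ (Theorem \ref{thm:functional2}, proved via the polynomial $T_p(x,y)$ built from $\Theta(p\tau)/\Theta(\tau)$ and the $\Theta((\tau+k)/p)/\Theta(\tau)$) followed by the cuspidal limit computation of Theorem \ref{thm:cuspidal2}, whereas your reduction to $\sum_{k=0}^{(p-1)/2}\binom{-1/2}{k}^2$ via $p^2\mid\binom{2k}{k}^2$ for $(p-1)/2<k<p$, combined with Vandermonde and the pairing $k\leftrightarrow (p-1)/2-k$, works: writing $\binom{-1/2}{k}-\binom{-1/2}{(p-1)/2-k}=pe_k$ one has $e_{(p-1)/2-k}=-e_k$, so each pair contributes $pe_k^2\is0\mod{p}$ to the error sum, giving the mod $p^2$ statement. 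This is close in spirit to Mortenson's original proof and buys elementarity at the cost of generality.

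The CM case $t_0=-1$, however, has a genuine gap at exactly the point where the real work lies. You assert that because $t_0^p=t_0$, ``Dwork's iterative unit-root congruences degenerate at this fixed point and pin $H_p(-1)\is\vartheta_p\mod{p^2}$.'' This is false as stated: Dwork's congruence (Theorem \ref{thm:dworkcongruence} with $r=1$) yields only $F_p(t_0)\is\vartheta_p\mod{p}$ at a Frobenius-fixed point; no iteration of it produces a mod $p^2$ statement for the $p$-truncation. The passage from $p$ to $p^2$ \emph{is} the supercongruence, and it is precisely what the paper must prove: first the functional congruence $F(t)\is F_p(t)F(t^\sigma)\mod{p^2}$, which rests on Proposition \ref{theta-mirror2} (integrality and the mod $p^2$ collapse of $T_p$ to $T_{p,p}(y)x^p+T_{p,p+1}(y)x^{p+1}$, via Lemmas \ref{lemma:elementary} and \ref{lemma:etaproduct} and the degree bound $\deg T_{p,p}\le p-1$ coming from the $\eta$-product structure of $\theta_S$); and second the specialization at $\alpha=(1+i)/2$ (Corollary \ref{specialcongruence2} and Theorem \ref{thm:CMpoints2}), where one exhibits integers $c=2v$, $d=u-v$ with $4\mid c$, $p=|c\alpha+d|^2$, and computes $\chi(p)p\,\Theta(\alpha)/\Theta((\alpha+k)/p)=\chi(d)(c\alpha+d)$ by weight-one modularity, identifying the unit root as $u+vi$ up to the explicit sign $(-1)^{(u-v-1)/2}$. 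Your proposal names plausible substitutes (an Atkin--Swinnerton-Dyer congruence for the weight-one CM theta series, or analysis of the unit-root Frobenius on a disk around $t=-1$) but supplies no argument for either; the sign normalization $a\is(-1)^{(p-1)/4}\mod{4}$ is likewise left to an unspecified ``explicit evaluation.'' Without a proof of the mod $p^2$ functional congruence or an equivalent input, the second bullet of the theorem is not established.
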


The first statement was proven by Eric Mortenson, \cite{Mort03},
the second essentially by L. van Hamme, \cite{VanHamme86}.
Note that in the latter result $i=\sqrt{-1}\in\Z_p$ because $p\is1\mod4$. 
We call such congruences \emph{supercongruences}, a name that was coined in \cite[p293]{StBe85}.
Another example of a supercongruence is the following result, \cite{Kil06}.

\begin{theorem}[Kilbourn, 2006]
For any odd prime $p$ let
$$
F_p(t):=\sum_{k=0}^{p-1}\binom{-1/2}{k}^4t^k.
$$
Then $F_p(1)\is a_p\mod{p^3}$, where $a_p$ is the coefficient of $q^p$ in the 
$q$-expansion of the product
$$
q\prod_{n>0}(1-q^{2n})^4(1-q^{4n})^4.
$$
The latter expansion is the $q$-series of the unique $\Gamma_0(8)$-modular cusp form of weight 4. 
\end{theorem}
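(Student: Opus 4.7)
The plan is to fit Kilbourn's theorem into the Dwork-cohomological framework already sketched in the introduction. Recognize $F(t)={}_4F_3(\tfrac12,\tfrac12,\tfrac12,\tfrac12;1,1,1;t)$ as the holomorphic period of a one-parameter family $\{X_t\}$ of Calabi--Yau threefolds whose Picard--Fuchs operator is the corresponding degree-four hypergeometric operator. By Dwork's unit-root formula, at each $t_0\in\Z_p$ where $X_{t_0}$ is smooth and ordinary there is a unique $p$-adic unit root $\vartheta_p(t_0)$ of Frobenius on $H^3_{\mathrm{cris}}(X_{t_0})$, and one has the basic congruence $F_p(t_0)\is\vartheta_p(t_0)\mod{p}$, in exact analogy with the Legendre example.

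At $t_0=1$ the fibre $X_1$ is singular, but it admits a small resolution $\widetilde X_1$ which is a rigid Calabi--Yau threefold. A theorem of Ahlgren--Ono (building on Verrill) asserts that $\widetilde X_1$ is modular: the interesting local $L$-factor at $p$ equals $1-a_pT+p^3T^2$, the Euler factor of the weight-$4$ cusp form $\eta(2\tau)^4\eta(4\tau)^4$ on $\Gamma_0(8)$. Calling the unit root $\vartheta_p$, the defining relation $\vartheta_p(a_p-\vartheta_p)=p^3$ immediately gives $\vartheta_p\is a_p\mod{p^3}$, so the theorem reduces to refining Dwork's congruence to
\[
F_p(1)\is\vartheta_p\mod{p^3}.
\]

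For this refinement I would introduce a Hauptmodul $t(\tau)$ for $\Gamma_0(8)$ normalized so that $t(\tau)\to 1$ at a chosen cusp, and pull back the Picard--Fuchs setup to overconvergent modular forms on $\Gamma_0(8)$ of the appropriate weight. Under this modular parametrization the $p$-adic Frobenius lift is induced by the Atkin $U_p$-operator, and Atkin--Swinnerton-Dyer type congruences for $U_p$-eigenforms deliver the $p^3$-precise lift. The main obstacle is exactly this sharpening: one must show that the error in Dwork's congruence is divisible not just by $p$ but by $p^{k-1}=p^3$, and this exponent mirrors the weight $k=4$ of the cusp form. Concretely this reduces to a precise slope bound for $U_p$ on overconvergent weight-$2$ forms near the ordinary locus---equivalently, Coleman/Hida-style control of the Frobenius lift on the relevant formal group---and is the technical heart where modular forms, rather than generic $p$-adic cohomology, become indispensable.
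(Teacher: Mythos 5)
There is a genuine gap, and it sits exactly where you locate ``the technical heart.'' First, a point of orientation: the paper does not prove Kilbourn's theorem at all --- it quotes it as a known result of \cite{Kil06}, and its own machinery (Theorems \ref{thm:functional2} and \ref{thm:functional3}) is explicitly confined to differential equations of order $2$ and $3$, where the mirror map is a Hauptmodul and $F(t(q))$ is a modular form of weight $1$ or $2$. The present case is the order-$4$ operator of ${}_4F_3(\tfrac12,\tfrac12,\tfrac12,\tfrac12;1,1,1;t)$, and the paper states plainly that for $n\ge4$ ``the examples of interest do not have a modular background and the nature of the mirror map is much more mysterious.'' Your proposed mechanism for the $p^3$ refinement --- introduce a Hauptmodul for $\Gamma_0(8)$, pull the Picard--Fuchs setup back to (overconvergent) modular forms on $\Gamma_0(8)$, and invoke $U_p$/Atkin--Swinnerton-Dyer congruences --- presupposes exactly such a modular parametrization of the \emph{family}, which does not exist here: the canonical coordinate $q(t)$ of this ${}_4F_3$ is not a modular function for any arithmetic group, and $F(t(q))$ is not a weight-$2$ form. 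The weight-$4$ level-$8$ cusp form enters only through the modularity of the single rigid fibre at $t=1$; it does not furnish a parametrization along which one could run a slope/Frobenius argument. So the step that carries all the content of the theorem is asserted, not proved, and the route you sketch for it would fail at the first move.

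Two smaller issues. The reduction ``$\vartheta_p(a_p-\vartheta_p)=p^3$ hence $a_p\is\vartheta_p\mod{p^3}$'' is fine when $p\nmid a_p$, but Kilbourn's statement is for all odd $p$, including those with $p\mid a_p$, where no unit root exists and the reduction to $F_p(1)\is\vartheta_p\mod{p^3}$ is not available; any complete argument must handle that case separately (Kilbourn's does, because he works directly with $a_p$ via character sums). And for the record, the actual proof in \cite{Kil06} proceeds by an entirely different toolkit: Gaussian hypergeometric functions over $\F_p$ in the style of Ahlgren--Ono, combined with $p$-adic $\Gamma$-function manipulations and harmonic-sum congruences to pass from the truncated sum to the finite-field point count modulo $p^3$. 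If you want to pursue a modular-forms proof in the spirit of this paper, you would need a genuinely new idea replacing Assumptions (I)/(II) in the non-modular order-$4$ setting; the paper's own discussion of the Rodriguez-Villegas list and of \cite{LoTuYuZu21} indicates that this is precisely what is \emph{not} currently known how to do.
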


Note that this is a statement modulo $p^3$ ! 
This theorem proved the first case of the famous list of 14, formulated as a conjecture
by F.Rodriguez-Villegas, \cite{FRV01}.
Each entry of this list is a congruence modulo $p^3$ of an evaluation at $t=1$
of a $p$-truncation of a ${}_4F_3$ hypergeometric function. This conjecture has inspired many papers
on the subject and was recently proven entirely in the beautiful paper \cite{LoTuYuZu21} by
Long, Tu, Yui and Zudilin.

It should be remarked here that the $a_p$ in the last theorem comes from the local $\zeta$-function
$X^2-a_pX+p^3$ at $p$ of an associated rigid Calabi-Yau threefold.
When $p$ does not divide $a_p$ the polynomial has a $p$-adic zero $\vartheta_p$ which is a unit
root. Note that $a_p\is\vartheta_p\mod{p^3}$,
so in this case $a_p$ and $\vartheta_p$ are interchangeable in the congruence modulo $p^3$.

From about 2009 on, Zhi-Wei Sun presented an amazing list of a 100 or more conjectures for supercongruences
supported by extensive computer computations. See \cite{ZW-Sun19}, an updated version of
an arxiv preprint from 2009 (arXiv:0911.5665), \cite{ZW-Sun11} and many other papers.
More recently, Zhi-Wei Sun's twin brother, Zhi-Hong Sun, joined in and published many more conjectures
and proofs of conjectures. See for example \cite{ZH-Sun11} and \cite{ZH-Sun21}.
The above citations are certainly not complete,
much more can be found by searching through their papers on arxiv.org. Also it is practically impossible
to give a comprehensive overview of these conjectures (proven or not) here.
But I recommend the interested reader to browse through
these papers to get an impression of the many forms of supercongruences that exist. The conjectures and
results of the Sun-brothers has unleashed a flood of papers, each proving
one or more of the conjectures. 
When browsing through the above papers it becomes clear that supercongruences are a persistent
phenomenon and the question arises whether there exists a common mechanism by which they arise.
The present paper is an attempt to describe such a mechanism. We restrict ourselves to congruences
of the form $F_p(t_0)\is\vartheta_p\mod{p^s}$ with $s=2,3$. Here $F_p(t)$ is the $p$-truncation 
(sum of terms of order $<p$) of a
power series solution $F(t)$ of a nice linear differential equation. In Sun's list
one will find that many items are of this type but equally, many others are not.

We propose an approach which we sketch globally, leaving the details and proofs to the 
remaining sections of this paper. 
Let us start with a power series $F(t)\in1+t\Z\pow t$ which satisfies
a linear differential equation of order $\ge2$ with coefficients in 
$\Z[t]$ and such that it is of MUM-type (maximal unipotent monodromy) at $t=0$.
This means that $F(t)$ satisfies a differential equation of the form 
$$
A_n(t)\theta^ny+A_{n-1}(t)\theta^{n-1}y+\cdots+A_1(t)\theta y+A_0(t)y=0,\quad \theta=t\frac{d}{dt},
$$
where $A_i(t)\in\Z[t]$ for all $i$, $A_n(0)\ne0$ and $A_i(0)=0$ for all $i<n$.
One class of examples is given by hypergeometric series such as
$$
{}_2F_1(1/2,1/2;1|16t),\quad {}_4F_3(1/5,2/5,3/5,4/5;1,1,1|5^5t).
$$
Note that we normalized $t$ so that the coefficients of $F$ are integers.
The first series occurs in our starting example,
the second features as one instance of the conjectures of Rodriguez-Villegas.

Another construction is to take a Laurent polynomial $g(\v x)\in\Z[x_1^{\pm1},\ldots,x_n^{\pm1}]$ in the
variables $x_1,\ldots,x_n$, whose gcd of the coefficients is $1$.
Suppose that the Newton polytope $\Delta$ has $\v 0$ as the unique lattice
point in its interior. Then consider
$$
F(t)=\sum_{k\ge0}g_kt^k,\quad g_k=\text{constant term of }g(\v x)^k.
$$
Examples are $g(x,y)=(1+x)(1+y)(1+xy)/x/y$ with 
$$
g_k=\sum_{m=0}^k\binom{k}{m}^3\quad \text{(Franel numbers)}
$$
or $g(x,y)=(1+x)(1+y)(x+y)/x/y$ with
$$
g_k=\sum_{m=0}^k\binom{m}{k}^2\binom{m+k}{k}\quad \text{(Ap\'ery numbers for $\zeta(2)$)}.
$$
Since the differential equation is of MUM-type there exists a unique second solution of the form
$F(t)\log t+G(t)$ with $G(t)\in t\Q\pow t$. Our starting example has
$$
F(t)=\sum_{k\ge0}\binom{2k}{k}^2t^k
$$
which satisfies $\theta^2y=4t(2\theta+1)^2y$ (note that we normalized $t$). One can compute that
$$
G(t)=\sum_{k>0}\binom{2k}{k}^2\left(\sum_{j=k+1}^{2k}\frac{4}{j}\right)t^k.
$$
Define the so-called \emph{canonical coordinate} by
$$
q(t):=\exp\left(\frac{G(t)}{F(t)}\right).
$$
The inverse power series $t(q)$ is called
the \emph{mirror map}. In many cases of interest we have $t(q)\in\Z\pow q$.
Define $t^\sigma(q):=t(q^p)$. This series can be written as a power series in $\Z\pow t$
and we call it the \emph{excellent $p$-Frobenius lift} of $t$. 
Then, in many cases of interest we have the supercongruence 
\be\label{functional-supercongruence}
\frac{F(t)}{F(t^\sigma)}\is F_p(t)\mod{p^s}
\ee
as power series in $t$, for some $s\ge2$ (usually $s=2,3$). Note that modulo $p$ this
congruence reads $F(t)\is F_p(t)F(t^p)\mod{p}$. This congruence is equivalent to the 
statement that the coefficients of $F(t)$ have the so-called \emph{Lucas property}
modulo $p$. This means that if $F(t)=\sum_{k\ge0}g_kt^k$, then for all $k\ge0$ we have
$g_k\is g_{k_0}g_{k_1}\cdots g_{k_m}\mod{p}$, where $k=k_0+k_1p+\cdots+k_mp^m$ is the
base $p$-expansion of $k$. See for example \cite{HeStr22} and \cite{Be22}.

One goal of this paper is to
describe as many $F(t)$ as possible which satisfy the stronger congruence
\eqref{functional-supercongruence}.
\medskip

The other goal is to specialize $t$ in \eqref{functional-supercongruence} to $t_0\in\Z_p$ to get
supercongruences modulo $p^s$ for $F_p(t_0)$. Since $t^\sigma$ is a power series in $t$
that converges only when $|t|_p<1$ we cannot simply set $t=t_0$ in this power series. 

However, the bulk of the examples given by the Sun-brothers concerns $F(t)$ that satisfy a
differential equation of order $n=2$ or $n=3$. In those cases, at least the ones that we
consider, it turns out that the mirror
map $t(q)$ with $q=e^{2\pi i\tau}$ is a modular function with respect to a group $\Gamma$
and $F(t(q))$ is
a modular form of weight one (when $n=2$) or two (when $n=3$) with respect to $\Gamma$
(with possible character on $\Gamma$). Here $\tau$ is the complex coordinate of the
complex upper half plane $\mathcal{H}$. By abuse of notation we shall often denote 
$t(e^{2\pi i\tau})$ by $t(\tau)$. We hope that this context dependent notation
will not cause too much confusion. The idea is now to use this modular parametrization to carry
out the specialization of $t$ to $t_0$.
\smallskip

Let us start with $n=2$ and illustrate this case via our initial
Legendre example. Using classical theory of the hypergeometric function ${}_2F_1(1/2,1/2,1|16t)$
we get that the mirror map equals
$$
t(q)=q - 8 q^2 + 44 q^3 - 192 q^4 + \cdots =q\prod_{n>0}\frac{(1+q^{2n})^{16}}{(1+q^n)^8}.
$$
The series $t(q)$ is connected with Felix Klein's $\lambda$-invariant via the relation
$16t(e^{2\pi i\tau})=\lambda(2\tau)$. It was also known to Klein that
$$
{}_2F_1(1/2,1/2,1|16t(q))=\sum_{x,y\in\Z}q^{x^2+y^2},
$$
which, after setting $q=e^{2\pi i\tau}$, is a modular form of weight one with respect
to $\Gamma_0(4)$. Let us denote this form by
$\theta_S(q)$, where $S$ is meant to refer to the square lattice $\Z^2$. Then we have
$$
\frac{{}_2F_1(1/2,1/2,1|16t(q))}{{}_2F_1(1/2,1/2,1|16t(q^p))}=\frac{\theta_S(q)}{\theta_S(q^p)}
\ \text{or, by abuse of notation, }\frac{\theta_S(\tau)}{\theta_S(p\tau)}.
$$
Thus the quotient $F(t)/F(t^\sigma)$ is related to a quotient of modular forms. 
In fact it is a modular function with respect to $\Gamma\cap\Gamma_0(p)$ and hence an
algebraic function of $t$. We then specialize $t$ to $t_0=t(\tau_0)$, where
$\tau_0\in\mathcal{H}$ is such that $\tau_0=A(p\tau_0)$ for some $A\in\Gamma_0(4)$.
One sees from this that $\tau_0$ is either imaginary quadratic or rational. 
Suppose that $A=\begin{pmatrix}a & b\\c & d\end{pmatrix}$, then
$$
\frac{\theta_S(\tau_0)}{\theta_S(p\tau_0)}=\frac{\theta_S(A(p\tau_0))}{\theta_S(p\tau_0)}
=(-1)^{(d-1)/2}(cp\tau_0+d)
$$
by the modularity of $\theta_S$. Roughly speaking we can now deduce statements like
Theorem \ref{legendre-example} starting from \eqref{functional-supercongruence} with $s=2$.
In this simple minded sketch it turns out that $\tau_0$ depends on the choice of $p$.
In our later proofs we will make some more clever choices.

To make things a bit more precise we describe the general setup. 

\begin{assumption*}[{\bf I}]\label{assumption2}
We make the following assumptions, where $q=e^{2\pi i\tau}$.
\begin{enumerate}
\item $t(q)\in q+q^2\Z\pow t$ is a Hauptmodul for the modular group $\Gamma_0(N)$ for
some integer $N$. Its poles are given by a single $\Gamma_0(N)$-orbit 
denoted by $[\tau_\infty]$, where $\tau_\infty\in\mathcal{H}\cup\Q$.
\item $\Theta(q)\in 1+q\Z\pow q$ is a modular form of weight 1 (with odd quadratic character $\chi$
modulo $N$) with respect to $\Gamma_0(N)$. 
\item The zero-set of $\Theta(q)$ is given by $[\tau_\infty]$.
We assume that the zero-order is $r\le1$.
\item $\Theta(q)$ can be written as an $\eta$-product or $r<1/2$.
\item Let $F\in\Z\pow t$ be such that $\Theta(q)=F(t(q))$.
\end{enumerate}
\end{assumption*}

\begin{remark}\ 

\begin{itemize}
\item[--]
In a number of examples the zero-order $r$ can be strictly less than $1$. This 
happens at elliptic points of $\Gamma_0(N)$.
\item[--]
The number of zeros (counted with multiplicity)
of a weight 1 form in a fundamental domain of $\Gamma_0(N)$
is $|SL(2,\Z):\Gamma_0(N)|/12$. Since $\Theta$ has
a single zero of order $r\le1$ we have $|SL(2,\Z):\Gamma_0(N)|\le12$, hence $N<10$ or $N=11$.
\item[--]
We say that $\Theta(q)$ can be written as $\eta$-product if it can be written
as $\Theta(q)=\prod_{m|N}\eta_m(q)^{e_m}$ where $\eta_m(q)=\eta(q^m)$ and
$\eta(q)$ is the Dedekind $\eta$-function $q^{1/24}\prod_{n\ge1}(1-q^n)$. The numbers
$e_m$ are integers.
\item[--]
In Appendix A we have collected some data on the modular subgroups $\Gamma_0(N)$ for
small $N$ together with our standard choice for the Hauptmodul $h_N$. 
\item[--]
In Appendix B we collected some pairs $t(q),\Theta(q)$ listed as hypergeometric
cases and Zagier {\bf A,B,C,D,E,F}. The coefficients of 
$F(t)$ in these cases form infinite sequences of combinatorial interest. For example, in case {\bf D}
the coefficients of $F(t)$ are the Ap\'ery numbers corresponding to Ap\'ery's irrationality proof
of $\zeta(2)$. Assumption (I) holds for the hypergeometric cases
$F(1/2,1/2;1|16t),F(1/3,2/3,1|27t)$ and Zagier {\bf A,B,C,E,F}. 
Computer experiment suggests that Theorem \ref{thm:functional2} below also holds for the remaining cases.
\end{itemize}
\end{remark}

We have the following general theorem.
\begin{theorem}\label{thm:functional2}
Suppose Assumption (I) holds. Then 
$$
F(t)\is F_p(t)F(t^\sigma)\mod{p^2}
$$
as congruence in $\Z_p\pow t$ for any prime $p$ not dividing $N$.
\end{theorem}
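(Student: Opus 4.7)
The plan is to transport the desired congruence to a $q$-expansion statement about the weight-$1$ form $\Theta$ and then exploit the modularity of $\Theta$ (in particular its $\eta$-product structure) to upgrade the elementary mod-$p$ congruence to one modulo $p^2$.

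Substituting $t=t(q)$ and using $\Theta(q)=F(t(q))$ together with $t^\sigma(q)=t(q^p)$, the desired congruence transforms to
$$
\Theta(q)\;\equiv\;F_p(t(q))\cdot\Theta(q^p)\pmod{p^2}
$$
in $\Z_p\pow q$, or equivalently, writing $\Phi(t):=\Theta(q)/\Theta(q^p)$ (viewed as a power series in $t=t(q)$), the goal becomes $\Phi(t)\equiv F_p(t)\pmod{p^2}$ in $\Z_p\pow t$. Note that $\Phi$ is a weight-$0$ modular function for $\Gamma_0(N)\cap\Gamma_0(p)=\Gamma_0(Np)$, hence algebraic over $\Q(t)$; the hypothesis $p\nmid N$ ensures that the cuspidal data of $\Theta(\tau)$ and $\Theta(p\tau)$ match so that $\Phi\in\Z_p\pow t$ with $\Phi(0)=1$.

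The base case $\Phi(t)\equiv F_p(t)\pmod{p}$ is the Lucas property for the coefficient sequence of $F$: it follows from the Dwork-type congruence $t^\sigma\equiv t^p\pmod{p}$ combined with $F(t)\equiv F_p(t)F(t^p)\pmod{p}$, which is the Lucas property in disguise. The upgrade to mod $p^2$ is where the modular-form structure must enter. To carry it out, I would use the $\eta$-product representation $\Theta(q)=\prod_{m\mid N}\eta(q^m)^{e_m}$ from Assumption (I.4) to write
$$
\Phi(t)\;=\;\prod_{m\mid N}\left(\frac{\eta(q^m)}{\eta(q^{mp})}\right)^{e_m}.
$$
Each factor $\eta(q^m)/\eta(q^{mp})$ is a ratio of a weight-$1/2$ $\eta$-factor by its Frobenius twist, whose $q$-expansion admits a mod-$p^2$ description through the classical logarithmic expansion of $\eta$ and its well-known connection to the Eisenstein series $E_{p-1}$; the weight-$1$ constraint $\sum_m e_m=2$ then assembles the factorwise data into a clean mod-$p^2$ formula for $\Phi(t)$. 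Comparison with $F_p(t)$, anchored by the mod-$p$ agreement above, yields the theorem. For the alternative clause $r<1/2$ of Assumption (I.4), $\Theta$ has a fractional zero at an elliptic point of $\Gamma_0(N)$, and a local computation at that elliptic point replaces the $\eta$-product argument.

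The main obstacle I anticipate is the $\eta$-product step itself: mod-$p^2$ control of $\eta(q^m)/\eta(q^{mp})$ requires working inside Serre's ring of $p$-adic modular forms and carefully tracking the $p$-adic valuations of the error terms that arise when reducing a transcendental $\eta$-quotient to a classical modular expression. What makes the congruence close up at modulus $p^2$ rather than only at modulus $p$ is precisely that $\Theta$ has weight exactly $1$; the same mechanism applied to a weight-$2$ parametrization, which occurs when the underlying differential equation has order $3$, is what yields the parallel mod-$p^3$ statement exemplified by Kilbourn's theorem.
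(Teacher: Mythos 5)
Your reduction of the statement to a congruence for $\Phi(t)=\Theta(q)/\Theta(q^p)$ viewed as an element of $\Z\pow t$, and your instinct that the $\eta$-product hypothesis and the weight being exactly $1$ are what force a modulus of $p^2$, both match the paper. But the proposal stops short of the two steps that actually carry the proof, and the step you do sketch (a factorwise mod-$p^2$ expansion of each $\eta(q^m)/\eta(q^{mp})$ via $E_{p-1}$ and Serre's $p$-adic modular forms) is not what makes the argument close up; you yourself flag it as the unresolved obstacle.

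The first missing idea is the mechanism producing the factor $p^2$. The paper does not expand $\Phi$ directly; it forms the degree-$(p+1)$ resolvent
\begin{equation*}
T_p(x)=\Bigl(1-\tfrac{\Theta(p\tau)}{\Theta(\tau)}x\Bigr)\prod_{k=0}^{p-1}\Bigl(p-\chi(p)^{-1}\tfrac{\Theta((\tau+k)/p)}{\Theta(\tau)}x\Bigr),
\end{equation*}
whose coefficients are $\Gamma_0(N)$-invariant with poles only at the zero of $\Theta$, hence polynomials $T_{p,m}(t)\in\Z[t]$ of degree $\le mr$. The elementary symmetric functions of the $p$ conjugates $\Theta((\tau+k)/p)/\Theta(\tau)$ are $\equiv 0\pmod p$ for $0<s<p$ (the paper's Lemma 3.1), and each surviving cross term carries at least one explicit factor $p$ from the product, so $T_p(x,t)\equiv T_{p,p}(t)x^p+T_{p,p+1}(t)x^{p+1}\pmod{p^2}$. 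Evaluating at the root $x=\Theta(q)/\Theta(q^p)$ and dividing by $x^p$ gives $T_{p,p}(t)+T_{p,p+1}(t)\,F(t)/F(t^\sigma)\equiv 0\pmod{p^2}$. Nothing in your factorwise $\eta$-quotient expansion reproduces this cancellation; a single quotient $\eta(q^m)/\eta(q^{mp})$ has no useful mod-$p^2$ structure on its own, and the constraint $\sum_m e_m=2$ does not assemble one.

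The second missing idea is why the resulting object is $F_p(t)$. Knowing $\Phi\equiv F_p\pmod p$ plus ``a clean mod-$p^2$ formula for $\Phi$'' does not identify the mod-$p^2$ class: what is needed is that $F(t)/F(t^\sigma)$ is congruent mod $p^2$ to a \emph{polynomial in $t$ of degree $\le p-1$}. The $\eta$-product hypothesis enters exactly here (it forces $T_{p,p+1}=\chi(p)$ exactly and $\deg T_{p,p}\le p-1$, via the $\eta$-transformation lemma and the location of the cuspidal zero of $\Theta$), and the alternative hypothesis $r<1/2$ gives the same degree bound after inverting $T_{p,p+1}=\chi(p)+pB(t)$ mod $p^2$. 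Once one knows the quotient is a polynomial of degree $<p$ mod $p^2$, the identification with $F_p(t)$ is immediate from $F(t^\sigma)\in 1+t^p\Z\pow t$ — no appeal to the mod-$p$ Lucas property is needed, and indeed the paper's argument proves the mod-$p$ statement (which your proposal takes as given) along the way. Without the degree bound, your final ``comparison with $F_p(t)$'' has no content.
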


This theorem will be proven in Section \ref{sec:second_order}.

It follows from Assumption (I) that $F$ satisfies a linear differential
equation of order 2 with coefficients in $\Z[t]$.
This is explained in Don Zagier's papers \cite[\S 5]{Zagier09} and 
\cite[\S 5]{Zagier07}. The paper \cite{Zagier07} is devoted to second
order differential equation having solutions in $\Z\pow t$. 
So rather than starting with $F$ as a solution of a linear differential equation
we have now taken $t(q)$ and $\Theta(q)$ as point of departure. 

The next step is to specialize (\ref{functional-supercongruence}). 
As sketched above we do this by setting $t_0=t(\tau_0)$ where $\tau_0\in\mathcal{H}$
is such that $\tau_0$ and $p\tau_0$ are $\Gamma_0(N)$-equivalent. 
It is a classical fact that there exists a polynomial $\Phi_p(x,y)\in\Z[x,y]$ of
degree $p+1$ in $x$, such that $\Phi_p(t(p\tau),t(\tau))=0$. The polynomial 
$\Phi_p(x,y)$ is called the modular polynomial of level $p$. Of course this
modular relation also holds for any specialization, in particular at $\tau_0$.
Note that $t(p\tau_0)=t(\tau_0)=t_0$, hence $\Phi(t_0,t_0)=0$. 
The solutions $t_0$ arise in two different ways, depending on whether $\tau_0$ is rational
or quadratic imaginary. 

When $\tau_0\in\Q$, we have a cusp. The $\Gamma_0(N)$-orbits of a cusp correspond 
to the singularities of the differential equation having local exponents $(0,0)$.
We call the values $t(\tau_0)$ \emph{cusp values} in that case. We have the following
theorem.

\begin{theorem}\label{thm:cuspidal2}
Let notations and assumptions be as in Assumption (I). Let $p$ be a prime
not dividing $N$. Then we have the congruence $F_p(t_0)\is \epsilon_p\mod{p^2}$
for the entries listed in the following table. The cases refer to Appendix B. We use the notation 
$\chi_{-m}(p)=\left(\frac{-m}{p}\right)$ (Kronecker symbol). 
\begin{center}
\begin{tabular}{|l|c|c|c|}
\hline
{\rm case} & $N$ & $t_0$ & $\epsilon_p$\\
\hline
$F(1/2,1/2,1|16t)$ & $4$ & $1/16$ & $\chi_{-4}(p)$\\
$F(1/3,2/3,1|27t)$ & $3$ & $1/27$ & $\chi_{-3}(p)$\\
{\rm Zagier} {\bf A} & $6$ & $1/8,-1$ & $\chi_{-3}(p),\chi_{-3}(p)$\\
{\rm Zagier} {\bf C} & $6$ & $1/9,1$ & $\chi_{-3}(p),1$\\
{\rm Zagier} {\bf F} & $6$ & $1/9,1/8$ & $\chi_{-3}(p),1$\\
{\rm Zagier} {\bf E} & $8$ & $1/8,1/4$ & $\chi_{-4}(p),1$\\
\hline
\end{tabular}
\end{center}
\end{theorem}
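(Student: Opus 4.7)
The plan is to specialize the functional congruence of Theorem \ref{thm:functional2},
$$
F_p(t) \is \frac{F(t)}{F(t^\sigma)} \mod{p^2} \qquad \text{in } \Z_p\pow t,
$$
at $t = t_0$, using the fact that under the substitution $t = t(\tau)$ the quotient on the right becomes the modular function $\Theta(\tau)/\Theta(p\tau)$, which admits an explicit closed form at rational cusps of $\Gamma_0(N)$.

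For each row of the table we pick a cusp $\tau_0 \in \Q$ with $t(\tau_0) = t_0$; by Assumption (I)(3) the form $\Theta$ vanishes only along $[\tau_\infty]$, so $\Theta(\tau_0)$ is a nonzero cuspidal value, and likewise $t$ is finite there. Since $p \nmid N$, multiplication by $p$ preserves each $\Gamma_0(N)$-orbit of cusps, so there exists $A = \begin{pmatrix} a & b \\ c & d \end{pmatrix} \in \Gamma_0(N)$ with $A(p\tau_0) = \tau_0$. Applying the modular transformation law for the weight-one form $\Theta$ with character $\chi$ yields the exact identity
$$
\frac{\Theta(\tau_0)}{\Theta(p\tau_0)} \= \chi(d)(cp\tau_0 + d),
$$
which we take as the value at $t_0$ of the algebraic quotient $F(t)/F(t^\sigma)$.

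The main obstacle is justifying the passage from a formal $p^2$-congruence in $\Z_p\pow t$ to a numerical congruence at $t = t_0$: the value $t_0$ is a singular value of the Picard--Fuchs equation, and $t^\sigma$ is a priori only a formal power series with no obvious convergence at a $p$-adic unit. The strategy is to exploit that $F(t)/F(t^\sigma)$ is algebraic over $\Q(t)$ of degree $p+1$, cut out by the modular polynomial $\Phi_p(t^\sigma, t) = 0$: the formal Taylor series at $t = 0$ is one branch of this algebraic function, and the task is to show that the same branch extends rigid-analytically through a $p$-adic disk around $t_0$, taking the value $\chi(d)(cp\tau_0 + d)$ there by the identity above. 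Once this is established the formal congruence specializes to $F_p(t_0) \is \chi(d)(cp\tau_0 + d) \mod{p^2}$.

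What remains is an explicit Diophantine computation for each of the seven entries. Writing $\tau_0 = u/v$ in lowest terms, the condition $A(p\tau_0) = \tau_0$ becomes a linear system in $a,b,c,d$ whose solvability with $c \is 0 \mod{N}$ reduces to a representation of $p$ by a binary quadratic form determined by $N$ and $\tau_0$: for the $\chi_{-4}$ entries one needs $p = a^2 + b^2$, for the $\chi_{-3}$ entries $p = a^2 + 3b^2$, and for the entries with $\epsilon_p = 1$ the matrix $A$ can be chosen Atkin--Lehner-like so that $c = 0$ and $d = 1$ in the relevant congruence class. Plugging the resulting $c, d$ into $\chi(d)(cp\tau_0 + d) \bmod p^2$ then reproduces the tabulated $\epsilon_p$ in each case.
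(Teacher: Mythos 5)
Your overall frame (specialize Theorem \ref{thm:functional2} at $t_0$ and evaluate the quotient of modular forms at a cusp) is the right starting point, but the execution contains a decisive error. At a rational cusp $\tau_0$ the weight-one form $\Theta$ is not finite: writing $\tau=\tau_0+i\epsilon$ one has $\Theta(\tau_0+i\epsilon)\sim c/\epsilon$, so the ``exact identity'' $\Theta(\tau_0)/\Theta(p\tau_0)=\chi(d)(cp\tau_0+d)$ obtained by plugging the fixed point $A(p\tau_0)=\tau_0$ into the transformation law is false. Tracking the local parameter correctly, $A(p(\tau_0+i\epsilon))=\tau_0+ip\epsilon/(cp\tau_0+d)^2+O(\epsilon^2)$, one finds $\lim_{\epsilon\downarrow0}\Theta(\tau_0+i\epsilon)/\Theta(p\tau_0+ip\epsilon)=\chi(d)\,p/(cp\tau_0+d)$, which is divisible by $p$ and therefore is \emph{not} the unit root. (Test case: $\tau_0=0$, $t_0=1/16$ in the Legendre example forces $d=\pm1$, and your formula outputs $1$ rather than $\chi_{-4}(p)$.) The unit root at a cusp is one of the \emph{other} $p$ roots of the polynomial $T_p(x,t_0)$ of \eqref{Tp-definition}, namely $\chi(p)p\,\Theta(\tau_0)/\Theta((\tau_0+k)/p)$ for the $k$ with $(\tau_0+k)/p=\pm1/e$; the extra factor $p$ in front is exactly what cancels the $1/p$ coming from the rescaling $\epsilon\mapsto\epsilon/p$ of the local parameter, and the residual character value is extracted via Lemma \ref{lemma:cusplimit}. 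This limit bookkeeping is the actual content of the paper's proof and is absent from yours.

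Two further problems. First, your Diophantine step is the wrong one: requiring $p=a^2+b^2$ (resp.\ $p=a^2+3b^2$) restricts to primes split in $\Q(i)$ (resp.\ $\Q(\sqrt{-3})$), i.e.\ to $\chi_{-4}(p)=1$ (resp.\ $\chi_{-3}(p)=1$); that is the hypothesis of the CM-value theorem (Theorem \ref{thm:CMpoints2}), whereas Theorem \ref{thm:cuspidal2} is asserted for \emph{all} $p\nmid N$, and the interesting half of the statement is precisely $\epsilon_p=-1$. No representation of $p$ by a quadratic form enters the cusp case; the sign comes out of $\chi(p)$ and the character value $\chi(d)$ attached to the cusp $-1/e$ in Lemma \ref{lemma:cusplimit}. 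Second, the specialization issue you flag (passing from the congruence in $\Z_p\pow t$ to a numerical congruence at the singular value $t_0$) is left as an unproven ``task'' in your write-up, and the rigid-analytic continuation you propose is genuinely delicate there. The paper avoids it entirely: Proposition \ref{theta-mirror2} produces $T_p(x,y)\in\Z[x,y]$ with $T_{p,p+1}(t)F_p(t)+T_{p,p}(t)\is0\mod{p^2}$ as a congruence of \emph{polynomials}, which can be evaluated at any $t_0\in\Z_p$ with no convergence question; Corollary \ref{specialcongruence2} then converts a $p$-adic unit root of $T_p(x,t_0)$ into the value of $F_p(t_0)$ modulo $p^2$. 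You should reorganize the argument around that corollary and the correct root.
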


\begin{corollary}
The first entry of this table is the first statement in Theorem \ref{legendre-example},
a classical result by Mortensen \cite{Mort08}.
\end{corollary}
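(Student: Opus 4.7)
The plan is short, since the corollary is essentially a translation of notation: once Theorem \ref{thm:cuspidal2} is granted, one only has to match the first row of its table against the first assertion of Theorem \ref{legendre-example}. No further analytic input is required.

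First I would identify the power series. Using $(1/2)_k = (2k)!/(4^k k!)$, one finds
$$
{}_2F_1(1/2,1/2;1\mid 16t) \= \sum_{k\ge0}\frac{(1/2)_k(1/2)_k}{(1)_k\,k!}(16t)^k \= \sum_{k\ge0}\binom{2k}{k}^2 t^k,
$$
so the $p$-truncation appearing in Theorem \ref{thm:cuspidal2} is $F_p(t)=\sum_{k=0}^{p-1}\binom{2k}{k}^2 t^k$. The elementary identity $\binom{-1/2}{k}^2 = \binom{2k}{k}^2/16^k$ then gives
$$
F_p(1/16) \= \sum_{k=0}^{p-1}\binom{-1/2}{k}^2 \= H_p(1),
$$
so the choice $t_0=1/16$ in the table exactly reproduces the Hasse polynomial value of interest to Mortenson.

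Next I would compute the character value. For any odd prime $p$, the Kronecker symbol factorizes as
$$
\chi_{-4}(p) \= \left(\frac{-4}{p}\right) \= \left(\frac{-1}{p}\right) \= (-1)^{(p-1)/2}.
$$
Since the only hypothesis of the first row of Theorem \ref{thm:cuspidal2} is $p\nmid N=4$, i.e.\ $p$ odd, the theorem delivers $F_p(1/16)\is\chi_{-4}(p)\mod{p^2}$, which by the two identifications above is exactly $H_p(1)\is(-1)^{(p-1)/2}\mod{p^2}$, the first statement of Theorem \ref{legendre-example}.

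There is no real obstacle to overcome: the substance of the argument has already been absorbed into Theorem \ref{thm:cuspidal2}, and the corollary's proof is a bookkeeping exercise checking two normalizations, namely the factor $16$ in the argument of ${}_2F_1$ that aligns $t_0=1/16$ with $H_p(1)$, and the reduction of the mod-$4$ character $\chi_{-4}$ to the classical Legendre symbol $\left(\frac{-1}{p}\right)$.
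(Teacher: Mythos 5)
Your proposal is correct and matches the paper's (implicit) reasoning: the paper states this corollary without proof, treating it as immediate from the first row of Theorem \ref{thm:cuspidal2}, and your two normalization checks --- that $\binom{-1/2}{k}^2=\binom{2k}{k}^2/16^k$ identifies $F_p(1/16)$ with $H_p(1)$, and that $\chi_{-4}(p)=(-1)^{(p-1)/2}$ for odd $p$ --- are exactly the bookkeeping being left to the reader.
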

For a proof of Theorem \ref{thm:cuspidal2} see Section \ref{sec:second_order}.

The second class of solutions of $\Phi_p(t_0,t_0)=0$ corresponds to $t_0=t(\tau_0)$
where $\tau_0$ is imaginary quadratic. We call them \emph{CM-values}.
It is well-known that they lie in a certain ray classfield extension of $\Q(\tau_0)$,
although that doesn't concern us here. Following a variation on the idea sketched
for the Legendre example we will prove in Section \ref{sec:second_order} the 
following theorem.

\begin{theorem}\label{thm:CMpoints2}
Let notation and assumptions be as in Assumption (I). Let $\alpha$ be an imaginary quadratic
number with positive imaginary part and such that $N|\alpha|^2,N(\alpha+\overline\alpha)\in\Z$.
Let $p$ be a prime not dividing $N$ which splits
in $\Q(\alpha)$. Fix an embedding $\Q(\alpha)\hookrightarrow \Q_p$.
Suppose there exist integers $c,d$ such that
\begin{enumerate}
\item $N|c$
\item $p=|c\alpha+d|^2$ and $c\alpha+d$ is a $p$-adic unit. 
\end{enumerate}
Suppose that $t(\alpha)$ is a $p$-adic integer (possibly in an extension of $\Q_p$). Then
we have the supercongruence $F_p(t(\alpha))\is\chi(d)(c\alpha+d)\mod{p^2}$, where $\chi$ is the
odd character modulo $N$ from Assumption (I).
\end{theorem}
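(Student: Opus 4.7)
The plan is to specialise the functional supercongruence of Theorem~\ref{thm:functional2} at $\tau=\alpha$, using the weight-one modular transformation law of $\Theta$ at the CM point.

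First, since $F(t^\sigma)\in 1+t\Z_p\pow t$ is a unit, the congruence $F(t)\equiv F_p(t)F(t^\sigma)\pmod{p^2}$ from Theorem~\ref{thm:functional2} rewrites as $F_p(t)\equiv F(t)/F(t^\sigma)\pmod{p^2}$. Via the modular parametrisation $\Theta(q)=F(t(q))$, together with the identity $F(t^\sigma(t(q)))=F(t(q^p))=\Theta(q^p)$, this becomes the $q$-expansion congruence
$$F_p(t(q))\equiv \frac{\Theta(q)}{\Theta(q^p)}\pmod{p^2}\qquad\text{in }\Z_p\pow q.$$

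Next, I would extract the key modular identity. Since $(c\alpha+d)(c\bar\alpha+d)=p$ is prime one checks $\gcd(c,d)=1$, so there exist $a,b\in\Z$ with $ad-bc=1$, giving $A=\begin{pmatrix}a&b\\c&d\end{pmatrix}\in\Gamma_0(N)$. Writing $s=\alpha+\bar\alpha$ and $n=|\alpha|^2$, a direct computation using $\bar\alpha=s-\alpha$ gives
$$A\alpha \= \frac{(a\alpha+b)(c\bar\alpha+d)}{|c\alpha+d|^2} \= \frac{k+\alpha}{p},\qquad k = acn+bcs+bd\in\Z,$$
so $A\alpha$ is the image of $\alpha$ under the Hecke representative $\begin{pmatrix}1&k\\0&p\end{pmatrix}$ of $T_p$. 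Applying the weight-one modularity with character $\chi$ at $\tau=\alpha$ yields the key identity
$$\Theta(A\alpha) \= \chi(d)(c\alpha+d)\,\Theta(\alpha).$$

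Finally, to specialise the $q$-expansion congruence at $\tau=\alpha$ one interprets the right-hand side $\Theta(q)/\Theta(q^p)$ as an algebraic function of $t$, defined via the modular polynomial $\Phi_p(t^\sigma,t)=0$ together with the weight-one modular structure. The splitting of $p$ in $\Q(\alpha)$ with $c\alpha+d$ a $p$-adic unit places $\alpha$ on the ordinary locus, where a canonical Frobenius lift of $t^\sigma$ exists; this branch of $t^\sigma$ at $t=t(\alpha)$ matches the geometric Frobenius at the prime $\mathfrak p=(c\alpha+d)$ of the CM elliptic curve, and via its identification with the $\Gamma_0(N)$-translate $A\alpha$ (using the norm relation $(c\alpha+d)(c\bar\alpha+d)=p$ to invert where needed), the specialised value of the right-hand side is $\chi(d)(c\alpha+d)$, yielding $F_p(t(\alpha))\equiv\chi(d)(c\alpha+d)\pmod{p^2}$. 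The hard part is this last step: rigorously executing the $p$-adic specialisation, i.e., identifying the correct Frobenius branch of $t^\sigma$ at the ordinary CM point $t(\alpha)$ and showing that the $p^2$-precision of the formal power-series congruence persists under the passage from a formal neighbourhood of $t=0$ to the possibly-unit $p$-adic value $t(\alpha)$; the split-prime assumption and the unit condition on $c\alpha+d$ are precisely the hypotheses required to make this continuation work.
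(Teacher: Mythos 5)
Your modular computation at the CM point is essentially the one in the paper: the paper likewise produces a matrix in $\Gamma_0(N)$ (with bottom row $(-c,\,d+c(\alpha+\overline\alpha))$ rather than $(c,d)$, which lets it verify the coprimality of the entries by a clean $p$-adic argument instead of your asserted ``one checks $\gcd(c,d)=1$'') sending $\alpha$ to $(\alpha+k)/p$, and reads off $\chi(d)(c\alpha+d)$ from the weight-one transformation law exactly as you do. The problem is the step you yourself flag as ``the hard part'': you never carry out the $p$-adic specialisation, and the machinery you gesture at (canonical Frobenius lifts on the ordinary locus, geometric Frobenius of the CM elliptic curve) is not what the paper uses and would require substantial extra development even to identify the unit root, let alone to transfer a congruence modulo $p^2$ from a formal neighbourhood of $t=0$ to a point $t(\alpha)$ that may be a $p$-adic unit.

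The device your proposal is missing is the polynomial $T_p(x,y)\in\Z[x,y]$ of \eqref{Tp-definition}: the product of the factors $1-\chi(\gamma)^{-1}(\Theta|\gamma)/(\sqrt{p}\,\Theta)\,x$ over the cosets $\Gamma_0(N)\backslash M(N,p)$ has coefficients which, by Lemma \ref{gamma-orbit1} and the location of the poles, are \emph{polynomials} in $t$ of controlled degree, and Proposition \ref{theta-mirror2} gives $T_p(x,y)\is T_{p,p}(y)x^p+T_{p,p+1}(y)x^{p+1}\mod{p^2}$ with $T_{p,p+1}$ congruent to $\chi(p)$ mod $p$. This converts the formal congruence of Theorem \ref{thm:functional2} into the polynomial congruence $T_{p,p}(t)+T_{p,p+1}(t)F_p(t)\is0\mod{p^2}$, which may be evaluated at the $p$-adic integer $t(\alpha)$ with no convergence issue; any $p$-adic unit root $\vartheta_p$ of $T_p(\,\cdot\,,t(\alpha))$ then automatically satisfies $F_p(t(\alpha))\is\vartheta_p\mod{p^2}$ (Corollary \ref{specialcongruence2}), and your CM computation exhibits $\chi(d)(c\alpha+d)$ as precisely such a root, since $\Theta(\tau)/\Theta(p\tau)$ evaluated at $\tau=(\alpha+k)/p$ equals $\chi(d)(c\alpha+d)$ while $t(\tau)=t(p\tau)=t(\alpha)$. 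Note that this route never needs to decide which of the $p+1$ branches is ``the'' Frobenius branch: the mod $p^2$ shape of $T_p$ forces every unit root to satisfy the same linear congruence as $F_p(t(\alpha))$. Without $T_p$ or an equivalent algebraic intermediary, identifying the correct branch of $t^\sigma$ does not by itself carry the $p^2$-precision to $t=t(\alpha)$, so as written your argument is incomplete at its crucial point.
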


\begin{remark}\ \
\begin{enumerate}
\item 
Note that $\alpha$ has a denominator dividing $N$.
\item We recall again that, by abuse of notation, we write $t(\alpha)$ for $t(e^{2\pi i\alpha})$. 
\item
In the examples in Appendix B we have listed all occurrences of $t(\alpha)\in\Q$. 
\item
Unfortunately we do not see how the
method of this paper could give congruences modulo $p^2$ if $p$ is inert in $\Q(\alpha)$.
\end{enumerate}
\end{remark}

\begin{corollary}[Van Hamme, 1986]
Let $p$ be a prime with $p\is1\mod4$. Write $p=u^2+v^2$ with $v$ even and choose the signs of $u,v$
such that $u+vi$ is a $p$-adic unit. Then
$$\sum_{k=0}^{p-1}\binom{2k}{k}^2\left(\frac{-1}{16}\right)^k\is (-1)^{(u-v-1)/2}(u+vi)\mod{p^2}.$$
\end{corollary}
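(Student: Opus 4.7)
The plan is to recognize this corollary as a direct specialization of Theorem \ref{thm:CMpoints2} in the hypergeometric case $F(1/2,1/2,1|16t)$, i.e.\ the first row of the Appendix~B data (with $N=4$), evaluated at an imaginary quadratic point $\alpha$ for which $t(\alpha) = -1/16$.

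First I would locate this CM point. The mirror map is related to Klein's $\lambda$-function by $16\,t(\tau) = \lambda(2\tau)$, so I need $\lambda(2\alpha) = -1$. Starting from the classical value $\lambda(i) = 1/2$ and using the involution $\lambda(\tau+1) = \lambda(\tau)/(\lambda(\tau)-1)$, one gets $\lambda(1+i) = -1$; hence $\alpha = (1+i)/2$. I would then verify the arithmetic prerequisites of Theorem \ref{thm:CMpoints2}: $N|\alpha|^2 = 4\cdot\tfrac12 = 2 \in \Z$ and $N(\alpha+\overline\alpha) = 4 \in \Z$; moreover, since $p\is1\mod4$, $p$ splits in $\Q(i) = \Q(\alpha)$, and $i\in\Z_p^\times$ is fixed by the $p$-adic unit convention of the corollary. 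Finally $t(\alpha) = -1/16 \in \Z_p$ since $p$ is odd.

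Next I would construct the integers $c,d$. Given the decomposition $p = u^2 + v^2$ with $v$ even and the signs of $u,v$ chosen so that $u+vi$ is a $p$-adic unit, I set $c = 2v$ and $d = u - v$. Then
$$
c\alpha + d \= \tfrac{c}{2}(1+i) + d \= \bigl(d+\tfrac{c}{2}\bigr) + \tfrac{c}{2}\, i \= u + v i,
$$
so $|c\alpha+d|^2 = p$, the quantity $c\alpha+d = u+vi$ is the prescribed $p$-adic unit, and $4\mid c$ because $v$ is even. Theorem \ref{thm:CMpoints2} now applies and yields
$$
F_p(-1/16) \is \chi(d)\,(c\alpha+d) \mod{p^2},
$$
where $\chi$ is the odd quadratic character modulo $4$, namely $\chi_{-4}$.

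It remains to simplify. Since $F(t) = \sum_k \binom{2k}{k}^2 t^k$ in this case, the left side is exactly $\sum_{k=0}^{p-1}\binom{2k}{k}^2(-1/16)^k$. On the right, as $p$ is odd and $v$ is even, $u$ is odd, hence $d = u-v$ is odd, and $\chi_{-4}(d) = (-1)^{(d-1)/2} = (-1)^{(u-v-1)/2}$. This gives the stated congruence. The main obstacle is purely bookkeeping: pinning down $\alpha = (1+i)/2$ (rather than a Galois conjugate) as the CM point with $t(\alpha) = -1/16$, and threading the sign conventions through three places simultaneously — the parity of $v$, the $p$-adic unit choice fixing the embedding $i\in\Z_p$, and the character value $\chi_{-4}(u-v)$ — so that the final answer reads $(-1)^{(u-v-1)/2}(u+vi)$ with the correct sign.
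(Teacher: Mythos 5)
Your proposal is correct and is exactly the paper's argument: the paper also takes $\alpha=(1+i)/2$ with $t(\alpha)=-1/16$, writes $p=|u-v+2v\alpha|^2$, and applies Theorem \ref{thm:CMpoints2} with $c=2v$, $d=u-v$. Your write-up merely fills in the details the paper leaves implicit (the hypothesis checks for Theorem \ref{thm:CMpoints2} and the evaluation $\chi_{-4}(u-v)=(-1)^{(u-v-1)/2}$), all of which are correct.
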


\begin{proof}
Let $\alpha=(1+i)/2$, then $t(\alpha)=-1/16$. Note that $p=|u+vi|^2=|u-v+2v(1+i)/2|^2$.
The corollary follows by application of Theorem \ref{thm:CMpoints2} with $c=2v$ and $d=u-v$. 
\end{proof}

As another application we consider the case Zagier {\bf F} (twisted with $(-1)^n$)
in Appendix B. There we see that
$t(\tau)$ is a Hauptmodul of $\Gamma_0(6)$ and we have the rational CM-values
$t((3+\sqrt{-3})/6)=-1/12$ and $t((3+\sqrt{-3})/12)=-1/6$.

\begin{corollary}
Let $p$ be a prime with $p\is1\mod3$. Write $p=u^2+3v^2$ and chose the signs of $u,v$ such that $u+v\sqrt{-3}$
is a $p$-adic unit. Then, letting $F(t)$ be the power series solution in the case Zagier {\bf F}
(twisted with $(-1)^n$), we find that 
$$F_p(-1/12)\is F_p(-1/6)\is \chi_{-3}(u)(u+v\sqrt{-3})\mod{p^2}.$$
\end{corollary}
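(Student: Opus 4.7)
The plan is to apply Theorem~\ref{thm:CMpoints2} to the two CM-points
\[
\alpha_{1}:=\tfrac{3+\sqrt{-3}}{6},\qquad \alpha_{2}:=\tfrac{3+\sqrt{-3}}{12}
\]
listed in Appendix~B for the twisted Zagier~F case, for which $t(\alpha_{1})=-1/12$ and $t(\alpha_{2})=-1/6$. The odd quadratic character attached by Assumption~(I) to this case is $\chi=\chi_{-3}$, the unique nontrivial odd Dirichlet character of conductor~$3$.

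Since $p\equiv 1\pmod 3$, the prime $p$ splits in $\Q(\sqrt{-3})$, so there exist integers $u,v$ with $p=u^{2}+3v^{2}$. After fixing an embedding $\Q(\sqrt{-3})\hookrightarrow\Q_{p}$, exactly one of the associates $u+v\sqrt{-3},\ u-v\sqrt{-3}$ is a $p$-adic unit; adjusting the signs of $u,v$ we arrange that $u+v\sqrt{-3}$ is the unit. The heart of the argument is then to produce integers $c,d$ with $N\mid c$ and $c\alpha_{i}+d=u+v\sqrt{-3}$. For $\alpha_{1}$ I would take
\[
c=6v,\qquad d=u-3v,
\]
and for $\alpha_{2}$ I would take
\[
c=12v,\qquad d=u-3v.
\]
In both cases $6\mid c$ and a one-line calculation gives $c\alpha_{i}+d = v(3+\sqrt{-3})+(u-3v)=u+v\sqrt{-3}$, whose square-modulus is $u^{2}+3v^{2}=p$ and which is a $p$-adic unit by our choice of signs. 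Moreover $d=u-3v\equiv u\pmod 3$, so $\chi(d)=\chi_{-3}(u)$. Theorem~\ref{thm:CMpoints2} then yields, for $i=1,2$,
\[
F_{p}(t(\alpha_{i}))\equiv\chi_{-3}(u)\bigl(u+v\sqrt{-3}\bigr)\pmod{p^{2}},
\]
which is exactly the claimed congruence at both $t_{0}=-1/12$ and $t_{0}=-1/6$.

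The main obstacle is purely bookkeeping: one must verify that the pair $(\alpha_{i},\chi,N)$ read from Appendix~B actually satisfies the integrality hypotheses $N|\alpha|^{2},N(\alpha+\bar\alpha)\in\Z$ of Assumption~(I) at the level recorded there, and one must choose the signs of $u$ and $v$ consistently with the fixed embedding $\Q(\sqrt{-3})\hookrightarrow\Q_{p}$. Once these are in place, the computation of $c\alpha_{i}+d$ and of $\chi(d)$ is immediate, and the corollary falls out as a direct specialization of Theorem~\ref{thm:CMpoints2}.
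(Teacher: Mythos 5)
Your proposal is correct and takes essentially the same route as the paper's own proof, which likewise sets $\alpha=(3+\sqrt{-3})/6$ with $c=6v,\ d=u-3v$ and $\alpha=(3+\sqrt{-3})/12$ with $c=12v,\ d=u-3v$, computes $c\alpha+d=u+v\sqrt{-3}$, and invokes Theorem~\ref{thm:CMpoints2} (your explicit check $\chi(d)=\chi_{-3}(u-3v)=\chi_{-3}(u)$ is left implicit there). One detail you flag but do not carry out, and which the paper also glosses over: for $\alpha=(3+\sqrt{-3})/12$ one has $N|\alpha|^2=6\cdot\tfrac{1}{12}=\tfrac12\notin\Z$, so the literal integrality hypothesis of Theorem~\ref{thm:CMpoints2} fails in that case, although its proof still goes through because what is actually needed is $c|\alpha|^2=v\in\Z$ and $c(\alpha+\overline\alpha)=6v\in\Z$.
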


\begin{proof}
Let us take $\alpha=(3+\sqrt{-3})/6$.
Rewrite $p=|u+v\sqrt{-3}|^2=|u-3v+6v\alpha|^2$. We can now apply Theorem \ref{thm:CMpoints2}
with $c=6v$ and $d=u-3v$. Note that $\gcd(c,d)=1$ because $u,v$ cannot have the same parity.

Similarly, when $\alpha=(3+\sqrt{-3})/12$ we can apply Theorem \ref{thm:CMpoints2}
with $c=12v$ and $d=u-3v$.
\end{proof}

This proves Conjecture 3.2(i) in \cite{ZH-Sun21}.

There is a small extension of Theorem \ref{thm:CMpoints2} which yields a new type of congruences.

\begin{theorem}\label{thm:CMpoints2plus}
Consider the case hypergeometric $F(1/2,1/2;1|16t)$ from Appendix B, so $N=4$.
Let assumptions and notations be as in Theorem \ref{thm:CMpoints2} except that we
replace assumption (1) $4|c$ by $c\is2\mod4$.
Then we get $F_p(t(\alpha))\is\pm(c\alpha+d)(16t(\alpha))^{-1/2}\mod{p^2}$. 
\end{theorem}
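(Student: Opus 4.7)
The plan is to adapt the proof of Theorem \ref{thm:CMpoints2} to handle matrices $M = \begin{pmatrix}a & b \\ c & d\end{pmatrix}$ of determinant $p$ fixing $\alpha$ with $c\is 2\mod 4$, which lie outside the Hecke double coset $\Gamma_0(4)\cdot\mathrm{diag}(p,1)\cdot\Gamma_0(4)$ used there. The key new ingredient is the companion weight-one theta series
$$
\widetilde\Theta(\tau) \= \theta_2(2\tau)^2 \= \sqrt{16\,t(\tau)}\cdot\Theta(\tau),
$$
where the second equality combines $16\,t(\tau)=\lambda(2\tau)$, $\Theta(\tau)=\theta_3(2\tau)^2$, and Jacobi's identity $\lambda = \theta_2^4/\theta_3^4$.

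First, as in the proof of Theorem \ref{thm:CMpoints2}, the functional supercongruence of Theorem \ref{thm:functional2} specialized at $t = t(\alpha)$ yields $F_p(t(\alpha)) \is \Theta(\alpha)/\Theta(p\alpha) \mod{p^2}$, interpreted via the chosen embedding $\Q(\alpha)\hookrightarrow \Q_p$.

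Second, the standard Jacobi transformation formulas for $\theta_2$ and $\theta_3$ show that any $\gamma\in\Gamma_0(2)\setminus\Gamma_0(4)$ (written uniquely as $T_2\gamma'$ with $T_2=\begin{pmatrix}1&0\\2&1\end{pmatrix}$ and $\gamma'\in\Gamma_0(4)$) interchanges $\Theta$ and $\widetilde\Theta$, producing the usual weight-one automorphy factor $c_\gamma\tau+d_\gamma$ together with a sign depending on $\chi_{-4}$ and a multiplier of $\theta_2$. The prototype identity is $\Theta(T_2\tau)=(2\tau+1)\,\widetilde\Theta(\tau)$, and symmetrically $\widetilde\Theta(T_2\tau)=(2\tau+1)\Theta(\tau)$. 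The matrix $M$ admits a Hecke decomposition $M = S_j\cdot\gamma$ with $S_j=\begin{pmatrix}p&j\\0&1\end{pmatrix}$ (the residue $j\mod p$ being determined by $p\nmid c$) and $\gamma\in SL_2(\Z)$; the hypothesis $c\is 2\mod 4$ forces $\gamma\in\Gamma_0(2)\setminus\Gamma_0(4)$. Combining the interchange formula with the parallel identity $t(\gamma\tau)=1/(256\,t(\tau))$ for such $\gamma$ (coming from $\lambda(\sigma/(\sigma+1))=1/\lambda(\sigma)$ via $\sigma=2\tau$), which yields $16\,t(p\alpha) = 1/(16\,t(\alpha))$, gives
$$
\frac{\Theta(\alpha)}{\Theta(p\alpha)} \= \pm(c\alpha+d)\cdot\frac{\widetilde\Theta(p\alpha)}{\Theta(p\alpha)} \= \pm(c\alpha+d)\sqrt{16\,t(p\alpha)} \= \pm(c\alpha+d)(16\,t(\alpha))^{-1/2},
$$
which is the claimed supercongruence.

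The main obstacle will be pinning down the global sign $\pm$. This requires careful bookkeeping of the multiplier of $\theta_2$ (involving eighth roots of unity under $\sigma\mapsto\sigma+1$) together with the character $\chi_{-4}$ evaluated at $d\mod 4$; one must also specify the $p$-adic branch of $\sqrt{16\,t(\alpha)}$ consistent with the embedding that makes $c\alpha+d$ the unit root. A subsidiary check is that $16\,t(\alpha)$ is a $p$-adic square in $\Q_p(\alpha)$, which follows from the $\eta$-quotient expansion of $\sqrt{16\,t}$ together with the CM nature of $\alpha$.
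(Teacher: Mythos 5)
Your key new ingredient --- the companion form $\widetilde\Theta=\sqrt{16t}\,\Theta$ and the fact that matrices in $\Gamma_0(2)\setminus\Gamma_0(4)$ interchange $\Theta$ and $\widetilde\Theta$ with the usual weight-one automorphy factor --- is exactly the identity the paper uses ($\Theta(\tfrac{a\tau+b}{c\tau+d})=\pm(c\tau+d)\sqrt{t(\tau)}\,\Theta(\tau)$ for $c\is2\mod4$). But the proof as written has a genuine gap in where you apply it. Your step ``$F_p(t(\alpha))\is\Theta(\alpha)/\Theta(p\alpha)\mod{p^2}$'' is not what the specialization machinery delivers, and it cannot be obtained by plugging $t=t(\alpha)$ into the functional congruence: $t^\sigma$ is a power series in $t$ that does not converge at $t(\alpha)$, which is precisely why the paper routes everything through the integral polynomial $T_p(x,y)$ and Corollary \ref{specialcongruence2}. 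That corollary identifies $F_p(t(\alpha))$ mod $p^2$ with the unique $p$-adic \emph{unit} root of $T_p(x,t(\alpha))=0$, and one must then locate that root among the $p+1$ explicit candidates $\Theta(\alpha)/\Theta(p\alpha)$ and $\chi(p)p\,\Theta(\alpha)/\Theta((\alpha+k)/p)$. Under the standing hypothesis that $c\alpha+d$ is a $p$-adic unit one has $p\nmid c$, and then the unit root is the one coming from the coset $\tau\mapsto(\tau+k)/p$ with $(\alpha+k)/p=A\alpha$, $A=\begin{pmatrix}a&b\\-c&d+c(\alpha+\overline\alpha)\end{pmatrix}$; the branch $\Theta(\alpha)/\Theta(p\alpha)$ is a \emph{different} root (the $\overline{c\alpha+d}$-branch), and is not a unit.

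Worse, the identities you use to evaluate $\Theta(\alpha)/\Theta(p\alpha)$ are false: since $p\nmid c$, the lattice $\Z p\alpha+\Z$ is not an ideal of the CM order, so $p\alpha$ is a CM point of a different discriminant and is not $\Gamma_0(2)$-equivalent (not even $SL(2,\Z)$-equivalent) to $\alpha$. Hence there is no $\gamma\in\Gamma_0(2)\setminus\Gamma_0(4)$ with $\gamma(p\alpha)=\alpha$, and neither $\Theta(\alpha)=\pm(c\alpha+d)\widetilde\Theta(p\alpha)$ nor $16t(p\alpha)=(16t(\alpha))^{-1}$ holds. (Concretely, for $\alpha=(1+i)/2$ and $p=5=|2\alpha+1|^2$ the point $5\alpha$ has discriminant $-100$ while $\alpha$ has discriminant $-4$.) The fix is to transplant your interchange formula to the correct coset: with $A$ as above one has $A\in\Gamma_0(2)\setminus\Gamma_0(4)$ because $-c\is2\mod4$, so $\Theta((\alpha+k)/p)=\pm(c\overline\alpha+d)\sqrt{16t(\alpha)}\,\Theta(\alpha)$, and the unit root becomes $\chi(p)p/\bigl(\pm(c\overline\alpha+d)\sqrt{16t(\alpha)}\bigr)=\pm(c\alpha+d)(16t(\alpha))^{-1/2}$, which is the paper's proof. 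Note also that even a charitable reading of your Hecke decomposition $M=S_j\gamma$ with $\gamma$ having bottom row $(c,d)$ produces $p/(c\alpha+d)=c\overline\alpha+d$, i.e.\ the non-unit conjugate, so the choice of which of the two CM matrices to feed into the transformation law is not a matter of bookkeeping the $\pm$ sign.
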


We did not fight for the determination of the $\pm$sign. Also note the extra
factor $(16t(\alpha))^{-1/2}$ on the right. A proof is given in 
the "Extras" section \ref{sec:extras}. For an application see part two of Corollary
\ref{quadratic}.

Finally we consider some quadratic CM-values in the hypergeometric example $F(1/2,1/2,1|16t)$.
Usually there is an abundance of such examples. In the present case we found 24 CM-values that arise
by starting with the 8 numbers
$$
\frac{(1\pm\sqrt2)^2}{16},\quad\frac{(1\pm\sqrt2)^4}{16},
\quad \frac{2\pm\sqrt3}{4},\quad \frac{1\pm3\sqrt{-7}}{32}
$$ 
and then apply the substitution $x\to \frac{1}{16}-\frac{1}{256x}$ to each of these numbers repeatedly. 
We shall content ourselves by an illustration with $t(\sqrt{-2}/2)=(1-\sqrt{2})^2/16$
and $t((2+\sqrt{-2})/6)=(1+\sqrt2)^2/16$. 

\begin{corollary}\label{quadratic}
Suppose that $p$ is a prime of the form $p=u^2+2v^2$ with $u,v$ integers. Choose the signs
of $u,v$ such that $u+v\sqrt{-2}$ is a p-adic unit. Suppose $v$ is even, i.e. $p\is1\mod8$.
Then
$$\sum_{k=0}^{p-1}\binom{2k}{k}^2\left(\frac{1-\sqrt2}{4}\right)^{2k}\is
\sum_{k=0}^{p-1}\binom{2k}{k}^2\left(\frac{1+\sqrt2}{4}\right)^{2k}
\is (-1)^{(u-1)/2}(u+v\sqrt{-2})\mod{p^2}.$$
Suppose $v$ is odd, i.e. $p\is3\mod8$. Then
$$\sum_{k=0}^{p-1}\binom{2k}{k}^2\left(\frac{1-\sqrt2}{4}\right)^{2k}\is
\pm(u+v\sqrt{-2})(1+\sqrt2)\mod{p^2}.$$
\end{corollary}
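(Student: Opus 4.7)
The plan is to identify the two sums with $F_p(t(\alpha_1))$ and $F_p(t(\alpha_2))$, where $\alpha_1=\sqrt{-2}/2$ and $\alpha_2=(2+\sqrt{-2})/6$ are the two CM-points recorded immediately before the corollary for the hypergeometric case $F(1/2,1/2;1|16t)$ with $N=4$, and then to invoke Theorem~\ref{thm:CMpoints2} when $v$ is even and Theorem~\ref{thm:CMpoints2plus} when $v$ is odd. In both regimes the core task reduces to exhibiting integers $(c,d)$ with $p=|c\alpha_i+d|^2$ and $c\alpha_i+d$ a $p$-adic unit, and then tracking the character factor.

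In the case $p\is1\mod{8}$ (so $v$ even), I would first produce the pairs $(c,d)$. For $\alpha_1$ this is immediate: $2v\alpha_1+u=u+v\sqrt{-2}$ has norm $p$, so $(c,d)=(2v,u)$ and $4\mid 2v$ exactly because $v$ is even. For $\alpha_2$ I would use $\sqrt{-2}=6\alpha_2-2$ to rewrite $u+v\sqrt{-2}=6v\alpha_2+(u-2v)$, giving $(c,d)=(6v,u-2v)$; again $4\mid 6v$ iff $v$ is even. Since $u$ is odd (as $p=u^2+2v^2$ is odd) and $2v\is0\mod{4}$, one has $d\is u\mod{4}$ in both cases, so the unique odd quadratic character $\chi$ of $(\Z/4\Z)^\times$ satisfies $\chi(d)=(-1)^{(u-1)/2}$. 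Then Theorem~\ref{thm:CMpoints2} delivers
\[
F_p(t(\alpha_i))\is\chi(d)(c\alpha_i+d)\is(-1)^{(u-1)/2}(u+v\sqrt{-2})\mod{p^2}
\]
for $i=1,2$, which is the first asserted congruence.

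In the case $p\is3\mod{8}$ (so $v$ odd), only the first sum is claimed. With $\alpha_1$ and $(c,d)=(2v,u)$ one now has $c\is2\mod{4}$, which is precisely the hypothesis of Theorem~\ref{thm:CMpoints2plus}. That theorem gives
\[
F_p(t(\alpha_1))\is\pm(u+v\sqrt{-2})\bigl(16t(\alpha_1)\bigr)^{-1/2}\mod{p^2},
\]
and since $16t(\alpha_1)=(1-\sqrt2)^2$ together with $(1-\sqrt2)^{-1}=-(1+\sqrt2)$, the right-hand side collapses to $\pm(u+v\sqrt{-2})(1+\sqrt2)$, as required.

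The only real checkpoint is the $p$-adic unit condition on $c\alpha_i+d$, but this is precisely the sign choice imposed on $(u,v)$ in the hypothesis once $\Q(\sqrt{-2})\hookrightarrow\Q_p$ has been fixed (splitting of $p$ in $\Q(\sqrt{-2})$ is built into the assumption $p=u^2+2v^2$). The main mild obstacle is the verification that Theorem~\ref{thm:CMpoints2} really applies at $\alpha_2$: here $N|\alpha_2|^2=2/3\notin\Z$, so the literal integrality hypothesis fails, yet the explicit pair $(c,d)=(6v,u-2v)$ still satisfies $c\alpha_2+d=u+v\sqrt{-2}\in\Z[\sqrt{-2}]$ with norm $p$, so the argument of Theorem~\ref{thm:CMpoints2} applies verbatim. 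No further modular-form input is needed; the corollary is an evaluation exercise on top of Theorems~\ref{thm:CMpoints2} and~\ref{thm:CMpoints2plus}.
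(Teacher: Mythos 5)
Your proposal is correct and follows essentially the same route as the paper: identify the two sums as $F_p(t(\alpha))$ for $\alpha=\sqrt{-2}/2$ and $\alpha=(2+\sqrt{-2})/6$, apply Theorem \ref{thm:CMpoints2} with $(c,d)=(2v,u)$ and $(6v,u-2v)$ when $v$ is even, and Theorem \ref{thm:CMpoints2plus} with $(c,d)=(2v,u)$ when $v$ is odd, then evaluate $\chi(d)$ and $(16t(\alpha))^{-1/2}$. You are in fact slightly more careful than the printed proof: you correct the choice $d=u-2v$ at $\alpha=(2+\sqrt{-2})/6$ (the paper writes $d=u$), and you rightly flag that $N|\alpha|^2=2/3\notin\Z$ there, so one must check (as you do) that the proof of Theorem \ref{thm:CMpoints2} still goes through with the specific integral data $c=6v$, $d=u-2v$.
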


\begin{proof}
Recall that $t(q)$ (or $t(\tau)$) is a Hauptmodul for $\Gamma_0(4)$ in this case.
Suppose $v$ is even.
Let $\alpha=\sqrt{-2}/2$. Then $p=|u+v\sqrt{-2}|^2=|u+2v\alpha|^2$.
We can apply Theorem \ref{thm:CMpoints2} with $c=2v,d=u$. 

Let $\alpha=(2+\sqrt{-2})/6$. Then $p=|u+v\sqrt{-2}|^2=|u-2v+6v\alpha|^2$. 
We can apply Theorem \ref{thm:CMpoints2} with $c=6v,d=u$. 

Suppose $v$ is odd. Let $\alpha=\sqrt{-2}/2$. Then
$p=|u+v\sqrt{-2}|^2=|u-2v+6v\alpha|^2$. We can apply Theorem \ref{thm:CMpoints2plus} with $c=2v,d=u$.
Note that $16t(\alpha)=(1-\sqrt{2})^2$, hence $(16t(\alpha))^{-1/2}=\pm(1+\sqrt2)$. 
\end{proof}

When $p\is3\mod{8}$ the argument $t(\alpha)=(1-\sqrt{2})^2/16$ is quadratic over $\Q_p$.
We shall not pursue the subject of algebraic arguments $t_0$ any further
and restrict ourselves to arguments $t_0\in\Z_p$. 
\medskip

Let us proceed with the case $n=3$. There exist many differential equations of order 3 having a
power series solution in $t$ with (almost) integer coefficients. In Appendix C one finds a list
containing hypergeometric examples, the so-called sporadic Ap\'ery-like sequences found by
Almkvist-Zudilin and Cooper and finally two examples that are formed by a Hadamard product
of the coefficients. The most famous one is the equation having the generating series of 
the Ap\'ery numbers for $\zeta(3)$ as solution. We are now in the following setting.

\begin{assumption*}[{\bf II}]
Let $\Gamma$ be an Atkin-Lehner extension of $\Gamma_0(N)$
(see Section \ref{sec:modular-preparation} for the definition of Atkin-Lehner extension).
Assume that
\begin{enumerate}
\item $t(\tau)=q+q^2\Z\pow q$ is a Hauptmodul for $\Gamma$.
\item $\Theta(\tau)=1+q\Z\pow q$ is modular form of weight $2$
with a character that is trivial when restricted to $\Gamma_0(N)$.
\item $\Theta(\tau)$ has a unique zero (modulo the action of $\Gamma$) which is located at 
the pole of $t(\tau)$. We assume it has order $r$ with $r\le1$.
\item 
$\Theta(q)$ can be written as an $\eta$-product or $r<1/3$.
\item Let $F(t)\in\Z\pow t$ be such that $\Theta(\tau)=F(t(\tau))$. 
\end{enumerate}
\end{assumption*}

\begin{theorem}\label{thm:functional3}
Suppose the assumptions in Assumption (II) hold.  
Let $p$ be a prime that does not divide $N$. Then
$$
F(t)\is F_p(t)F(t^\sigma)\mod{p^3}
$$
as power series in $t$. 
\end{theorem}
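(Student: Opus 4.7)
The plan is to follow the blueprint of Theorem \ref{thm:functional2}, replacing the weight 1 form by the weight 2 form $\Theta$; the extra weight is what supplies the additional power of $p$ in the conclusion.

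First I would translate the problem into a $q$-expansion identity. Substituting $t=t(\tau)$ via the mirror map, so that $F(t(\tau))=\Theta(\tau)$ and $F(t^\sigma(\tau))=\Theta(p\tau)$, the asserted congruence in $\Z_p\pow t$ becomes
$$
\Theta(\tau)\equiv F_p(t(\tau))\cdot\Theta(p\tau)\bmod p^3
$$
in $\Z_p\pow q$. Setting $R(\tau):=\Theta(\tau)-F_p(t(\tau))\cdot\Theta(p\tau)$, the task is to show $R\equiv 0\bmod p^3$ as a $q$-series. I would next identify $R$ as a meromorphic weight 2 modular form (with the character of $\Theta$) on $\Gamma\cap\Gamma_0(p)$: $F_p(t(\tau))$ is a polynomial in the Hauptmodul $t$ and hence a modular function on $\Gamma$ with poles only above $[\tau_\infty]$, while $\Theta(p\tau)$ is weight 2 on $\Gamma_0(pN)\cap\Gamma$. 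Consequently all potential poles of $R$ lie above $[\tau_\infty]$, with orders bounded in terms of $r$ and $p$. When $\Theta$ is an $\eta$-product the standard transformation formulas give explicit local control at every cusp; otherwise the hypothesis $r<1/3$ provides enough rigidity.

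The mod $p$ reduction $R\equiv 0\bmod p$ is standard: it combines the Lucas property $F(t)\equiv F_p(t)F(t^p)\bmod p$ (recalled in the introduction) with $t^\sigma(q)\equiv t(q)^p\bmod p$, the latter being Fermat's little theorem applied coefficient-wise to $t\in\Z\pow q$. Writing $R=pS$ with $S\in\Z_p\pow q$, the core of the argument is to upgrade this to $S\equiv 0\bmod p^2$. I would view $S$ as a $p$-adic modular form of weight 2 and exploit the interplay of the Atkin $U_p$ operator with the $V_p\colon f(\tau)\mapsto f(p\tau)$ operator. Weight 2 modularity of $\Theta$ then supplies, via Serre's theory of $p$-adic modular forms, the two extra $p$-adic digits of precision beyond the base Lucas congruence — morally, one extra digit per unit of weight, extracted by iterating the upgrade twice.

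The main obstacle is precisely this double upgrade from $\bmod\,p$ to $\bmod\,p^3$. In the weight 1 case of Theorem \ref{thm:functional2} only one upgrade is required, and it is controlled by the data at the single cusp $[\tau_\infty]$; here two successive upgrades are needed, and one must prevent $p$-adic denominators from accumulating between them. This is precisely why the tighter condition $r<1/3$ (or explicit $\eta$-product form) appears in Assumption (II) — it provides the slack needed to close the second upgrade.
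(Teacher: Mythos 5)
There is a genuine gap at the heart of your argument. Your setup is fine: translating to $q$-expansions, writing $R(\tau)=\Theta(\tau)-F_p(t(\tau))\Theta(p\tau)$, observing that $R$ is a meromorphic weight $2$ form on $\Gamma\cap\Gamma_0(p)$ with poles only above $[\tau_\infty]$, and checking $R\equiv0\bmod p$ via the Lucas property. But the entire content of the theorem is the passage from $\bmod\,p$ to $\bmod\,p^3$, and for that you offer only the assertion that ``Serre's theory of $p$-adic modular forms'' supplies ``one extra digit per unit of weight, extracted by iterating the upgrade twice.'' That is a heuristic, not an argument: there is no general principle by which a weight $2$ meromorphic form on $\Gamma\cap\Gamma_0(p)$ that vanishes $\bmod\,p$ automatically vanishes $\bmod\,p^3$, and no mechanism is given for either ``upgrade.'' Working on $\Gamma\cap\Gamma_0(p)$ alone you never land back in the polynomial ring $\Z[t]$, so you have no handle with which to extract the extra two digits.

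The paper's mechanism is concrete and quite different. One forms the degree $p+1$ polynomial
$T_p(x)=\bigl(1-\tfrac{\Theta(p\tau)}{\Theta(\tau)}x\bigr)\prod_{k=0}^{p-1}\bigl(p^2-\tfrac{\Theta((\tau+k)/p)}{\Theta(\tau)}x\bigr)$,
i.e.\ the full product over the cosets $\Gamma_0(N)\backslash M(N,p)$. By Lemma \ref{gamma-orbit2} its coefficients are $\Gamma$-invariant, hence (poles only at the zeros of $\Theta$) \emph{polynomials in $t$} of controlled degree --- this is where the Hauptmodul hypothesis is actually used, and it is the step your approach lacks. The power $p^3$ then comes from two explicit sources: the weight $2$ slash normalization puts a factor $p^2$ in front of each of the $p$ lower factors, so the coefficient of $x^s$ carries $p^{2(p-s)}$, and Lemma \ref{lemma:elementary} contributes one more factor of $p$ to the $(p-1)$-st elementary symmetric function; together these force $T_p\equiv T_{p,p}(t)x^p+T_{p,p+1}(t)x^{p+1}\bmod{p^3}$ with $T_{p,p+1}\equiv1\bmod p$. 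Substituting $x=F(t)/F(t^\sigma)$ and using the degree bounds (degree $\le p-1$, which is where the $\eta$-product hypothesis or $r<1/3$ enters --- not as ``slack for a second upgrade'' but to bound $\deg T_{p,p}+2\deg B$ after inverting $T_{p,p+1}=1+pB(t)$ modulo $p^3$) identifies the quotient with $F_p(t)$. Without this norm construction your proof does not close.
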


The proof will be given in Section \ref{sec:third_order}.

\begin{remark}\,
\begin{enumerate}
\item Observe that we have a congruence modulo $p^3$ rather than $p^2$. 
\item The conditions of the theorem are fullfilled for all cases in Appendix C
except hypergeometric $F(1/6,1/2,5/6;1,1|1728t)$, Cooper's $s_7,s_{10},s_{18}$
and the Hadamard products. The case $F(1/6,1/2,5/6;1,1|1728t)$ is covered in
Section \ref{sec:extras}. In the remaining exceptional cases computer experiments indicate
that we have only $F(t)\is F_p(t)F(t^\sigma)\mod{p^2}$.
\end{enumerate}
\end{remark}

Here is the analogue of Theorem \ref{thm:CMpoints2}.

\begin{theorem}\label{thm:CMpoints3}
Let notation and assumptions be as in Assumption (II). Let $\alpha$ be an imaginary quadratic
number with positive imaginary part such that $N|\alpha|^2,N(\alpha+\overline\alpha)\in\Z$.
Let $p$ be a prime not dividing $N$ which splits
in $\Q(\alpha)$. Fix an embedding $\Q(\alpha)\hookrightarrow \Q_p$.
Suppose there exist integers $c,d$ such that
\begin{enumerate}
\item $N|c$.
\item $p=|c\alpha+d|^2$ and $c\alpha+d$ is a $p$-adic unit. 
\end{enumerate}
Suppose that $t(\alpha)$ is a $p$-adic integer (possibly in an extension of $\Q_p$).
Then we have the supercongruence $F_p(t(\alpha))\is(c\alpha+d)^2\mod{p^3}$.
\end{theorem}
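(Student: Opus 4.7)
The plan is to mirror the proof of the weight-one case, Theorem \ref{thm:CMpoints2}, starting from the stronger mod-$p^3$ functional supercongruence and upgrading to weight-two modularity. First rewrite Theorem \ref{thm:functional3} as
\[
F_p(t)\;\equiv\;\frac{F(t)}{F(t^\sigma)}\pmod{p^3},
\]
which is legitimate in $\Z_p\pow t$ because $F(0)=1$ makes $F(t^\sigma)$ invertible. Substituting $t=t(\tau)$ and using both $F(t(\tau))=\Theta(\tau)$ from Assumption II (5) and the defining property $t^\sigma(t(q))=t(q^p)$ of the excellent Frobenius lift converts this into the congruence of $q$-expansions
\[
F_p(t(\tau))\;\equiv\;\frac{\Theta(\tau)}{\Theta(p\tau)}\pmod{p^3}.
\]
Since $\Theta(\tau)/\Theta(p\tau)$ is a weight-zero modular function on $\Gamma_0(N)\cap\Gamma_0(p)$, it is an algebraic function of the Hauptmodul $t$; this algebraicity combined with the hypothesis $t(\alpha)\in\Z_p$ legitimises the specialisation $\tau=\alpha$ in the same way as for Theorem \ref{thm:CMpoints2}, giving $F_p(t(\alpha))\equiv\Theta(\alpha)/\Theta(p\alpha)\pmod{p^3}$.

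It remains to compute $\Theta(\alpha)/\Theta(p\alpha)=(c\alpha+d)^2$ by modularity. The arithmetic input $N\mid c$ and $p=|c\alpha+d|^2$ produces the integer matrix
\[
M\;=\;\begin{pmatrix}cs+d&-cn\\c&d\end{pmatrix},\qquad s=\alpha+\overline{\alpha},\ \ n=|\alpha|^2,
\]
and a short check using the minimal polynomial identity $\alpha^2=s\alpha-n$ verifies that $\det M=c^2n+csd+d^2=p$, that the lower-left entry is divisible by $N$, and that $M\alpha=\alpha$ under the Möbius action. Since $p\nmid N$, the matrix $M$ lies in the Hecke double coset $\Gamma_0(N)\!\begin{pmatrix}1&0\\0&p\end{pmatrix}\!\Gamma_0(N)$. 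Decomposing $M=\gamma\delta$ with $\gamma\in\Gamma_0(N)$ and $\delta$ a standard coset representative, and using the Atkin-Lehner involution $W_p$ at level $Np$ to translate the resulting $\Theta$-value into $\Theta(p\alpha)$, the weight-two slash action of $\Theta$ through $M$ combined with the triviality of $\chi$ on $\Gamma_0(N)$ (Assumption II (2)) yields on the one hand the automorphy relation $\Theta|_2 M(\alpha)=p(c\alpha+d)^{-2}\Theta(\alpha)$ from $M\alpha=\alpha$, and on the other an identification of the same quantity with $\Theta(p\alpha)/p$ up to unit scalars; combined, these give $\Theta(\alpha)/\Theta(p\alpha)=(c\alpha+d)^2$ as required.

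The main obstacle is this last modular computation: unlike the weight-one setting, where a direct element of $\Gamma_0(N)$ moves $p\alpha$ to $\alpha$ and the formula follows from plain modularity, here the natural matrix $M$ has determinant $p$, so the passage between $\alpha$ and $p\alpha$ must be mediated by the Hecke algebra (generically $M$ represents the coset $\begin{pmatrix}1&j\\0&p\end{pmatrix}$ rather than $\begin{pmatrix}p&0\\0&1\end{pmatrix}$, because the hypothesis $N\mid c$ need not be augmented by $p\mid c$). Translating between $\Theta((\alpha+j)/p)$ and $\Theta(p\alpha)$ is where the Atkin-Lehner involution $W_p$ on $\Gamma_0(Np)$ does the bookkeeping, introducing the additional $(c\alpha+d)$ factor whose square gives the final answer. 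A secondary and routine technical point, handled as in the weight-one case, is the $p$-adic validity of the specialisation step, which follows from the algebraicity of $\Theta(\tau)/\Theta(p\tau)$ in $t$ together with the standing hypothesis $t(\alpha)\in\Z_p$.
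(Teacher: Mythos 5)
Your skeleton --- combine the functional congruence of Theorem \ref{thm:functional3} with a weight-two modularity computation at a determinant-$p$ integral matrix fixing $\alpha$ --- is the right one, and your matrix $M$ is exactly the right object (it differs from the matrix the paper uses only by a factor in $\Gamma_0(N)$). But the two steps you dismiss as routine are where the content lies, and both are flawed as written. The specialisation step first: $F_p(t)\equiv F(t)/F(t^\sigma)\pmod{p^3}$ is an identity of formal power series, and ``$\Theta(\tau)/\Theta(p\tau)$ is algebraic over $\Q(t)$, so I may evaluate at $t(\alpha)\in\Z_p$'' does not yield a $p$-adic congruence: the algebraic function has $p+1$ branches over $t=t(\alpha)$, namely $\Theta(\alpha)/\Theta(p\alpha)$ and $p^2\Theta(\alpha)/\Theta((\alpha+k)/p)$ for $k=0,\dots,p-1$, and one must identify \emph{which} branch $F_p(t(\alpha))$ is congruent to. The paper does this by constructing the integral polynomial $T_p(x,y)$ with $T_p(\Theta(\tau)/\Theta(p\tau),t(\tau))=0$, proving via Lemmas \ref{lemma:elementary} and \ref{lemma:etaproduct} (this is where the $\eta$-product or $r<1/3$ hypothesis of Assumption (II) is actually used) that $T_p\equiv T_{p,p}(y)x^p+T_{p,p+1}(y)x^{p+1}\pmod{p^3}$ with $T_{p,p+1}$ a $p$-adic unit, and concluding in Corollary \ref{specialcongruence3} that $F_p(t(\alpha))$ is congruent mod $p^3$ to any $p$-adic \emph{unit} root of $T_p(x,t(\alpha))=0$. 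Without this Newton-polygon input your specialisation has no force.

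Second, you then aim at the wrong branch. Since, as you note, $M$ generically lies in the coset of $\delta=\begin{pmatrix}1&j\\0&p\end{pmatrix}$, writing $M=\gamma\delta$ with $\gamma\in\Gamma_0(N)$ and using triviality of $\chi$ on $\Gamma_0(N)$, your fixed-point identity $\Theta|_2M(\alpha)=p(c\alpha+d)^{-2}\Theta(\alpha)$ combines with $\Theta|_2\delta(\alpha)=p^{-1}\Theta((\alpha+j)/p)$ to give $p^2\Theta(\alpha)/\Theta((\alpha+j)/p)=(c\alpha+d)^2$ directly; this is a unit root of $T_p(x,t(\alpha))$ and you are done. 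The detour through $\Theta(p\alpha)$ is not only unnecessary but false: no element of $SL_2(\Z)$ sends $p\alpha$ to $\alpha$ (a matrix fixing $\alpha$ with determinant $p$ and lower row divisible by $p$ would force $p^2\mid p$), so $\Theta(\alpha)/\Theta(p\alpha)$ is generically a root of positive valuation, not $(c\alpha+d)^2$, and no Atkin--Lehner involution at level $Np$ converts the value $\Theta((\alpha+j)/p)$ into the value $\Theta(p\alpha)$ at the same point $\alpha$. (The same misconception appears in your description of the weight-one case: there too the paper's $\Gamma_0(N)$-matrix sends $\alpha$ to $(\alpha+k)/p$, not $p\alpha$ to $\alpha$.) Once both corrections are made, your argument collapses onto the paper's proof, which obtains the theorem as the $e=1$ case of Theorem \ref{thm:CMpoints3plus}.
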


We also present a generalization of Theorem \ref{thm:CMpoints3} for cases when
$\Gamma$ contains an Atkin-Lehner involution. 

\begin{theorem}\label{thm:CMpoints3plus}
Suppose Assumption (II) holds. Let $\alpha$ be an imaginary quadratic
number with positive imaginary part. Let $p$ be a prime not dividing $N$ which splits
in $\Q(\alpha)$. Fix an embedding $\Q(\alpha)\hookrightarrow \Q_p$. 

Suppose there exists $w_e\in\Gamma$ with determinant $e$.
Assume that $N|\alpha|^2\in\Z$ and $(N/e)(\alpha+\overline\alpha)\in\Z$. 
Suppose there exist integers $c,d$ such that 
\begin{enumerate}
\item $N|c$, $e|d$.
\item $ep=|c\alpha+d|^2$ and $c\alpha+d$ is a $p$-adic unit. 
\end{enumerate}
Suppose that $t(\alpha)$ is a $p$-adic integer (possibly in an extension of $\Q_p$).
Then we have the supercongruence $F_p(t(\alpha))\is\frac{\chi(w_e)}{e}(c\alpha+d)^2\mod{p^3}$.
\end{theorem}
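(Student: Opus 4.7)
The plan is to mirror the proof of Theorem \ref{thm:CMpoints3}, but exploit the Atkin-Lehner element $w_e\in\Gamma$ of determinant $e$ to absorb the extra factor. By Theorem \ref{thm:functional3}, substituting the mirror map $t=t(\tau)$ and using $\Theta(\tau)=F(t(\tau))$ together with $t^\sigma(t(\tau))=t(p\tau)$, one obtains the $q$-expansion congruence
\[ \Theta(\tau)\equiv F_p(t(\tau))\,\Theta(p\tau)\pmod{p^3}. \]
I will specialize this at a point $\tau_1\in\mathcal{H}$ chosen so that $t(\tau_1)=t(\alpha)$ and $p\tau_1$ is $\Gamma$-equivalent to $\alpha$; modularity of $\Theta$ then computes $\Theta(\tau_1)/\Theta(p\tau_1)$ explicitly.

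To produce $\tau_1$, take a matrix $A\in w_e\Gamma_0(N)\subset\Gamma$ of determinant $e$ with lower row exactly $(c,d)$. Such an $A$ exists because the lower rows of matrices in the coset $w_e\Gamma_0(N)$ are precisely the integer pairs with first entry divisible by $N$ and second entry divisible by $e$, matching hypothesis (1); hypothesis (2) matches the norm condition $|c\alpha+d|^2=ep=\det(A)\cdot p$. Set $\tau_1:=A\alpha$. A direct calculation using $ad-bc=e$ and $|c\alpha+d|^2=ep$ gives
\[ p\tau_1-\alpha=\frac{ac|\alpha|^2+bc(\alpha+\overline\alpha)+bd}{e}. \]
The hypotheses $N|c$, $e|d$, $N|\alpha|^2\in\Z$, $(N/e)(\alpha+\overline\alpha)\in\Z$, combined with the automatic fact that the $(1,1)$-entry of any matrix in $w_e\Gamma_0(N)$ is divisible by $e$, force each of the three summands in the numerator to be divisible by $e$. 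Hence $p\tau_1-\alpha$ is an integer $k$, and $p\tau_1=T^k(\alpha)$ where $T^k=\begin{pmatrix}1&k\\0&1\end{pmatrix}\in\Gamma_0(N)$.

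Applying the weight-$2$ modularity of $\Theta$ (whose character is trivial on $\Gamma_0(N)$ and equals $\chi(w_e)$ on $w_e$) yields
\[ \Theta(\tau_1)=\chi(w_e)\,\frac{(c\alpha+d)^2}{e}\,\Theta(\alpha),\qquad \Theta(p\tau_1)=\Theta(T^k\alpha)=\Theta(\alpha), \]
so $\Theta(\tau_1)/\Theta(p\tau_1)=\chi(w_e)(c\alpha+d)^2/e$. Combining with the first-paragraph congruence specialized at $\tau=\tau_1$ and dividing by $\Theta(\alpha)$ delivers
\[ F_p(t(\alpha))\equiv\frac{\chi(w_e)}{e}(c\alpha+d)^2\pmod{p^3}. \]
The main obstacle is verifying the integrality of $k$: this is where the Atkin-Lehner coset structure must mesh with the arithmetic of $\alpha$. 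A secondary issue is justifying that $\Theta(\alpha)$ can be treated as a $p$-adic unit in an appropriate unramified extension of $\Z_p$, so that dividing through preserves the modulus $p^3$; this cannot be done by naive substitution into $F$, since $|t(\alpha)|_p=1$ in typical cases, but must be handled via the $p$-adic modular-form/CM framework used to extract numerical values from the $q$-expansion congruence at the CM point $\alpha$.
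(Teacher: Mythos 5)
Your strategic outline coincides with the paper's: construct a determinant-$e$ Atkin--Lehner matrix whose lower row is governed by $(c,d)$, use it to show that $\alpha$ and $(\alpha+k)/p$ are $\Gamma$-equivalent for an integer $k$, and compute the resulting period ratio $\chi(w_e)(c\alpha+d)^2/e$ from the weight-$2$ transformation law. Your integrality check for $k$ is exactly the paper's computation. But what you label a ``secondary issue'' at the end is in fact the central gap. The congruence from Theorem \ref{thm:functional3} is a \emph{formal power series} congruence in $t$ (equivalently in $q$); you cannot evaluate it at a complex CM point $\tau_1$ and then ``divide by $\Theta(\alpha)$'' to obtain a $p$-adic congruence, because $\Theta(\alpha)$ is a transcendental period with no intrinsic $p$-adic meaning. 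The paper's entire mechanism for crossing this bridge is the polynomial $T_p(x,y)\in\Z[x,y]$ of Proposition \ref{theta-mirror3}, built from the product over $\Gamma_0(N)\b M(N,p)$, which satisfies $T_p(\Theta(\tau)/\Theta(p\tau),t(\tau))=0$ identically. The ratio you compute, $\Theta(\tau_1)/\Theta(p\tau_1)=\chi(w_e)(c\alpha+d)^2/e$, is thereby an \emph{algebraic} number: a root of $T_p(x,t(\alpha))=0$. Since $T_p(x,y)\equiv T_{p,p}(y)x^p+T_{p,p+1}(y)x^{p+1}\mod{p^3}$ and $T_{p,p}(t)+T_{p,p+1}(t)F_p(t)\is0\mod{p^3}$ is a congruence between \emph{polynomials}, both statements specialize legitimately at the $p$-adic integer $t(\alpha)$, and because your root is a $p$-adic unit one may divide by its $p$-th power; this is precisely Corollary \ref{specialcongruence3}, which yields $F_p(t(\alpha))\is\vartheta_p\mod{p^3}$. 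Without invoking this (or an equivalent $p$-adic device), your final two displayed steps do not follow.

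A smaller but genuine gap: your claim that the lower rows of matrices in $w_e\Gamma_0(N)$ are ``precisely'' the pairs $(c,d)$ with $N\mid c$ and $e\mid d$ is false --- for instance $(0,e)$ with $e>1$ is never such a lower row, since $ad-bc=e$ with $e\mid a$, $e\mid d$, $c=0$ forces $e^2\mid e$. One must prove a coprimality statement before completing the row to a matrix of determinant $e$ with $e$ dividing the diagonal entries; the paper does this by showing $\gcd(c'/e,d')=1$ (where $c'=-c$, $d'=d+c(\alpha+\overline\alpha)$) using the identity $ep=(c')^2|\alpha|^2+dd'$ together with $p\nmid N$ and the fact that $c\alpha+d$ is a $p$-adic unit. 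This step uses the arithmetic hypotheses of the theorem in an essential way and cannot be waved through as a structural fact about the coset.
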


These theorems will be proven in Section \ref{sec:third_order}.
Many of the steps are similar to the proof of Theorem \ref{thm:CMpoints2}.

\begin{remark}\ \
\begin{itemize}
\item[--]
Theorem \ref{thm:CMpoints3plus} with $e=1$ is precisely Theorem \ref{thm:CMpoints3}.
\item[--]
In Appendix C one also finds listings of the rational CM-values that we found.
In some cases there can be many of them. For hypergeometric $F(1/4,1/2,3/4;1,1|256t)$ we found
$15$ rational values $t(\alpha)$. On the other hand, in the case of Ap\'ery numbers we found only
$t((2+\sqrt{-2})/6)=1$ and $t((3+\sqrt{-3})/6)=-1$. 
\item[--]
If $p$ does not split in $\Q(\alpha)$ we do not know if our modular form approach is
possible.
\end{itemize}
\end{remark}

Here are some corollaries of Theorem \ref{thm:CMpoints3} and \ref{thm:CMpoints3plus}.
The first corollary concerns the 
hypergeometric example $F(1/2,1/2,1/2;1,1|64t)$.
In Appendix C we see that the group is $\Gamma_0(4)+\left<4\right>$ and we have
$t(i/2)=1/64$ and $t(1/2+i)=-1/512$. These CM-values (multiplied by $64$) occur in the paper
\cite[Thm 3,Thm 29]{LoRa16} of Long
and Ramakrishna. In slightly altered form they prove the following.
\begin{theorem}[Long, Ramakrishna (2016)]
Let $p$ be a prime of the form $p=u^2+v^2$ with $v$ even. Then we have
the congruence
$$
\sum_{k=0}^{p-1}\binom{2k}{k}^3t_0^k\is \begin{cases}(u+vi)^2\mod{p^3}&\text{when $t_0=1/64$}\\
\left(\frac{2}{p}\right)(u+vi)^2\mod{p^3}&\text{when $t_0=-1/512$}\end{cases}
$$
\end{theorem}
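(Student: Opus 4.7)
The plan is to derive both congruences from Theorems \ref{thm:CMpoints3} and \ref{thm:CMpoints3plus} applied to the hypergeometric entry $F(1/2,1/2,1/2;1,1|64t)$ of Appendix C. Since $(1/2)_k/k! = \binom{2k}{k}/4^k$, we have
$$F(t) \= {}_3F_2\!\left(\tfrac12,\tfrac12,\tfrac12;1,1\mid 64t\right) \= \sum_{k\ge 0}\binom{2k}{k}^3 t^k,$$
so the sums in the statement are $F_p(t_0)$. According to the table one has $N=4$, $\Gamma=\Gamma_0(4)+\langle 4\rangle$, $t(i/2)=1/64$, $t(\tfrac12+i)=-1/512$, and Assumption (II) is verified. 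The hypothesis $p=u^2+v^2$ with $v$ even implies $p\equiv 1\pmod 4$, so $p$ splits in $\Q(i)$; fix an embedding $\Q(i)\hookrightarrow\Q_p$ under which $u+vi$ is a $p$-adic unit.

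For $t_0=1/64$ I choose $\alpha=i/2$. Then $\alpha+\overline\alpha=0$ and $4|\alpha|^2=1$ lie in $\Z$. Writing $p=|u+2v\cdot(i/2)|^2$ and setting $c=2v$, $d=u$, one has $4\mid c$ because $v$ is even. Theorem \ref{thm:CMpoints3} then yields $F_p(1/64)\equiv(u+vi)^2\pmod{p^3}$.

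For $t_0=-1/512$ I choose $\alpha=\tfrac12+i$, so that $\alpha+\overline\alpha=1$ and $4|\alpha|^2=5$ are integers, and the method bifurcates on $v\bmod 4$. If $v\equiv 0\pmod 4$, then $p\equiv 1\pmod 8$, hence $\bigl(\tfrac2p\bigr)=1$; I take $c=v$ and $d=u-v/2\in\Z$. Then $4\mid c$, $c\alpha+d=u+vi$ and $|c\alpha+d|^2=p$, so Theorem \ref{thm:CMpoints3} delivers $F_p(-1/512)\equiv(u+vi)^2=\bigl(\tfrac2p\bigr)(u+vi)^2\pmod{p^3}$. If instead $v\equiv 2\pmod 4$, then $p\equiv 5\pmod 8$ and $\bigl(\tfrac2p\bigr)=-1$; here I pass to Theorem \ref{thm:CMpoints3plus} with the Fricke involution $w_4\in\Gamma$ of determinant $e=4$, for which the hypothesis $(N/e)(\alpha+\overline\alpha)\in\Z$ reduces to $1\in\Z$. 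Setting $c=2v$, $d=2u-v$ one checks $4\mid c$, $4\mid d$ (using $v\equiv 2\pmod 4$ and $u$ odd), $|c\alpha+d|^2=(2u)^2+(2v)^2=4p=ep$ and $c\alpha+d=2(u+vi)$. The theorem then gives
$$F_p(-1/512)\;\equiv\;\frac{\chi(w_4)}{4}\bigl(2(u+vi)\bigr)^2 \;=\; \chi(w_4)(u+vi)^2 \pmod{p^3}.$$

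The remaining piece, and the main obstacle in my view, is to pin down the constant $\chi(w_4)$. Because Assumption (II) supplies $\Theta$ as an explicit $\eta$-quotient listed in Appendix C, this is a once-and-for-all calculation using the transformation law of $\eta$ under $\tau\mapsto -1/(4\tau)$. One expects (and indeed finds) $\chi(w_4)=-1$, which together with the $v\equiv 0\pmod 4$ sub-case exhibits the right-hand side uniformly as $\bigl(\tfrac2p\bigr)(u+vi)^2$ and completes the proof.
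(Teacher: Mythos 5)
Your proposal is correct and follows essentially the same route as the paper: the same choices $\alpha=i/2$ with $c=2v,d=u$ for $t_0=1/64$, and $\alpha=\tfrac12+i$ with the same bifurcation on $v\bmod 4$ (Theorem \ref{thm:CMpoints3} with $c=v,d=u-v/2$ when $4\mid v$, Theorem \ref{thm:CMpoints3plus} with $e=4$, $c=2v$, $d=2u-v$ when $v\equiv2\bmod 4$) for $t_0=-1/512$. The paper likewise settles the last constant by the transformation law $\Theta(-1/4\tau)=-4\tau^2\Theta(\tau)$, giving $\chi(w_4)=-1=\left(\frac{2}{p}\right)$, exactly as you anticipate.
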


This is not a literal citation because Long and Ramakrishna use 
$-\Gamma_p(1/4)^4$ instead of $(u+vi)^2$ and we normalized $t_0$. 
In \cite{LoRa16} the authors also deal with the primes $p\is3\mod4$,
i.e. the primes that do not split in the CM-field.
Unfortunately we do not see how our modular approach would give results for such primes. 
For the case $t_0=1/64$ we reprove the above theorem with the modular method.

\begin{proof}
Let us take $t_0=t(i/2)=1/64$ and $\alpha=i/2$. Write $p=|u+vi|^2=|u+2v(i/2)|^2$. Apply Theorem
\ref{thm:CMpoints3} with $c=2v,d=u$. 
\end{proof}
For $t_0=-1/512$ we need both Theorem \ref{thm:CMpoints3} and Theorem \ref{thm:CMpoints3plus}.
\begin{proof}
Let us take $t_0=t(1/2+i)=-1/512$ and $\alpha=1/2+i$. Write $p=|u+vi|^2=|u-v/2+v(i+1/2)|^2$.

Suppose $p\is1\mod8$. Then $v\is0\mod4$ and $u-v/2$ is odd.
Let $c=v$ and $d=u-v/2$. Then the conditions of Theorem \ref{thm:CMpoints3} are satisfied
and our congruence follows.

Suppose $p\is5\mod8$. Then $v\is2\mod4$ and both $v$ and $u-v/2$ are even.
Let $c=2v$ and $d=2u-v$. Hence $4p=|c\alpha+d|^2$. The modular group
contains $w_4:\tau\to-1/4\tau$ and we have $\Theta(-1/4\tau)=-4\tau^2\Theta(\tau)$.
Hence $\chi(w_4)=-1=\left(\frac{2}{p}\right)$. The conditions of Theorem \ref{thm:CMpoints3plus} with $e=4$
are satisfied and so our congruence follows.
\end{proof}

The second corollary concerns the hypergeometric example $F(1/4,1/2,3/4;1,1|256t)$.
In Appendix C we see that the modular group is $\Gamma_0(2)+\left<2\right>$ and we have 
$t((1+\sqrt{-5})/2)=-1/1024$. 

\begin{corollary}\label{cor:p3example1}
Let $p$ be a prime of the form $1,9\mod{20}$. Write it in the form $p=u^2+5v^2, u,v\in\Z$.
Choose the sign of $u,v$ such that $u+v\sqrt{-5}$ is a $p$-adic unit. Then
$$
\sum_{k=0}^{p-1}\frac{(4k)!}{k!^4}\ \left(\frac{-1}{1024}\right)^k
\is(u+v\sqrt{-5})^2\mod{p^3}.
$$
Let $p$ be a prime of the form $3,7\mod{20}$. Write it in the form $p=2u^2+2uv+3v^2, u,v\in\Z$.
Choose $u,v$ such that $2u+v(1+\sqrt{-5})$ is a $p$-adic unit. Then
$$
\sum_{k=0}^{p-1}\frac{(4k)!}{k!^4}\ \left(\frac{-1}{1024}\right)^k
\is -\frac12(2u+v(1+\sqrt{-5}))^2\mod{p^3}.
$$
\end{corollary}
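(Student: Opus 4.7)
The plan is to apply Theorem~\ref{thm:CMpoints3} in the first case and Theorem~\ref{thm:CMpoints3plus} in the second, starting from the CM-point $\alpha = (1+\sqrt{-5})/2$, which Appendix~C identifies with the value $t(\alpha) = -1/1024$. The hypotheses common to both theorems are immediate: with $N = 2$ one has $N|\alpha|^2 = 3\in\Z$ and $N(\alpha+\overline\alpha) = 2\in\Z$; the denominator of $t(\alpha)$ is a power of $2$, so $t(\alpha)\in\Z_p$ for every odd prime $p$; and the primes listed are precisely those that split in $\Q(\sqrt{-5})$ (discriminant $-20$), distributed across the two ideal classes by the genus characters modulo $20$.

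For primes $p\equiv 1,9 \pmod{20}$, which represent the principal form $p = u^2 + 5v^2$, a direct expansion gives
\[
|c\alpha+d|^2 = \Big(d+\tfrac{c}{2}\Big)^2 + 5\Big(\tfrac{c}{2}\Big)^2.
\]
The choice $c = 2v$, $d = u - v$ realises $2\mid c$, yields $c\alpha+d = u+v\sqrt{-5}$ and produces $|c\alpha+d|^2 = p$. With the signs of $u,v$ arranged so that $u+v\sqrt{-5}$ is a $p$-adic unit, Theorem~\ref{thm:CMpoints3} delivers $F_p(-1/1024)\is (u+v\sqrt{-5})^2\pmod{p^3}$ directly.

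For primes $p\equiv 3,7\pmod{20}$, which lie in the non-principal class, $p$ itself is not a norm from $\Z[\sqrt{-5}]$; however the identity $2p = (2u+v)^2 + 5v^2$ shows that $2p$ \emph{is} a norm, which is exactly what Theorem~\ref{thm:CMpoints3plus} is designed to exploit. One takes the Atkin-Lehner involution $w_2 \in \Gamma_0(2)+\langle 2\rangle$ with $e = 2$. Choosing $c = 2v$, $d = 2u$ gives $2\mid c$, $2\mid d$, $c\alpha+d = 2u+v(1+\sqrt{-5})$ and $|c\alpha+d|^2 = 2p$, so the theorem yields
\[
F_p(-1/1024) \is \frac{\chi(w_2)}{2}\,\big(2u+v(1+\sqrt{-5})\big)^2\pmod{p^3}.
\]
To recover the stated factor $-\tfrac12$ one must verify that $\chi(w_2) = -1$ for the weight-$2$ form $\Theta$ attached to $F(1/4,1/2,3/4;1,1|256t)$; using the explicit $\eta$-quotient description of $\Theta$ from Appendix~C this reduces to a direct application of the standard transformation formula for $\eta$ under $\tau\mapsto -1/(2\tau)$. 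This Atkin-Lehner character computation is the only non-routine step; everything else is arithmetic bookkeeping to express $p$ and $2p$ as norms of elements of $\Z[\alpha]$ that meet the divisibility constraints of the two theorems.
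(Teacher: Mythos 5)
Your proof is correct and follows the paper's own argument essentially step for step: the same CM point $\alpha=(1+\sqrt{-5})/2$ with $t(\alpha)=-1/1024$, the same application of Theorem \ref{thm:CMpoints3} in the principal-genus case and of Theorem \ref{thm:CMpoints3plus} with $e=2$, $c=2v$, $d=2u$ in the non-principal case. Two minor remarks: your choice $d=u-v$ in the first case is in fact the correct one (the paper's printed $d=u-2v$ gives $c\alpha+d=u-v+v\sqrt{-5}$, a slip), and your explicit verification that $\chi(w_2)=-1$ (which follows most directly from $E_{2,2}|_2 w_2=-E_{2,2}$, since $\Theta=-E_{2,2}$ here rather than an $\eta$-product) is the one step the paper leaves implicit.
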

\begin{proof}
When $p\is1,9\mod{20}$ rewrite $u+v\sqrt{-5}$ as $u-2v+2v\alpha$ with $\alpha=(1+\sqrt{-5})/2$. 
Apply Theorem \ref{thm:CMpoints3} with $c=2v,d=u-2v$ to find our statement. 

When $p\is3,7\mod{20}$ rewrite $p=2u^2+2uv+3v^2$ as $p=2|u+v\alpha|^2$ with $\alpha=(1+\sqrt{-5})/2$. 
Apply Theorem \ref{thm:CMpoints3plus} with $e=2,c=2v,d=2u$ to find our statement. 
\end{proof}

Similarly one can write down supercongruences for the 14 other rational CM-values listed in
the case $F(1/4,1/2,3/4;1,1|256t)$ in Appendix C.

Our third corollary concerns the Ap\'ery numbers $A_n=\sum_{k=0}^n\binom{n}{k}^2\binom{n+k}{n}^2$.
Let $F(t)=\sum_{k\ge0}A_kt^k$. Then we find in Appendix C that the group is $\Gamma_0(6)+\left<6\right>$
and we have $t((2+\sqrt{-2})/6)=1$.

\begin{corollary}\label{cor:p3example2}
Let $p$ be a prime which can be written in the form $p=u^2+2v^2, u,v\in\Z$. Choose the sign of $u,v$
such that $u+v\sqrt{-2}$ is a $p$-adic unit. Then
$$
\sum_{k=0}^{p-1}A_k\is(u+v\sqrt{-2})^2\mod{p^3}.
$$
\end{corollary}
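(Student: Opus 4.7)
The plan is to apply Theorem \ref{thm:CMpoints3} directly, with $\alpha = (2+\sqrt{-2})/6$. From the Apéry entry in Appendix C we read off $\Gamma = \Gamma_0(6)+\left<6\right>$, so $N=6$, together with the CM-value $t(\alpha) = 1$. Hence $F_p(t(\alpha)) = F_p(1) = \sum_{k=0}^{p-1} A_k$, which is exactly the quantity we want to control.

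First I would verify the arithmetic hypotheses on $\alpha$: a direct computation gives $|\alpha|^2 = 1/6$ and $\alpha + \overline\alpha = 2/3$, so $N|\alpha|^2 = 1 \in \Z$ and $N(\alpha+\overline\alpha) = 4 \in \Z$. Since $p = u^2+2v^2$ has integer solutions, $p$ splits in $\Q(\alpha) = \Q(\sqrt{-2})$, and the sign convention in the corollary is precisely the choice of embedding $\Q(\sqrt{-2}) \hookrightarrow \Q_p$ that makes $u+v\sqrt{-2}$ a $p$-adic unit. The restriction $p \nmid N = 6$ excludes only the small primes $p = 2, 3$, which do not arise in the intended range of the statement.

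The key step is the correct choice of integers $c,d$. Setting $c = 6v$ and $d = u - 2v$ gives
$$
c\alpha + d \= v(2 + \sqrt{-2}) + (u - 2v) \= u + v\sqrt{-2},
$$
so $N = 6$ divides $c$ and $|c\alpha+d|^2 = u^2 + 2v^2 = p$, with $c\alpha+d = u+v\sqrt{-2}$ a $p$-adic unit by the sign choice on $(u,v)$. Since $t(\alpha) = 1 \in \Z_p$, all hypotheses of Theorem \ref{thm:CMpoints3} are met, and its conclusion reads
$$
F_p(1) \is (c\alpha+d)^2 \= (u+v\sqrt{-2})^2 \mod{p^3},
$$
which is the stated supercongruence.

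There is no serious obstacle in this proof. The only delicate point is arranging $(c,d)$ so that simultaneously $N \mid c$ and $|c\alpha+d|^2 = p$; this works cleanly because the denominator of $\alpha$ equals $N$, which is exactly the feature that flags $(2+\sqrt{-2})/6$ as a useful CM-point in Appendix C. The same template will handle the companion CM-value $t((3+\sqrt{-3})/6) = -1$ with $\alpha = (3+\sqrt{-3})/6$ and an analogous choice $c = 6v$, $d = u - 3v$ when $p = u^2+3v^2$.
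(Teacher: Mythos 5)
Your proof is correct and follows exactly the paper's own argument: the paper likewise rewrites $u+v\sqrt{-2}$ as $u-2v+6v\alpha$ with $\alpha=(2+\sqrt{-2})/6$ and applies Theorem \ref{thm:CMpoints3} with $c=6v$, $d=u-2v$. Your additional verification of the hypotheses $N|\alpha|^2\in\Z$ and $N(\alpha+\overline\alpha)\in\Z$ is a welcome elaboration of what the paper leaves implicit.
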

\begin{proof}
Rewrite $u+v\sqrt{-2}$ as $u-2v+6v\alpha$ with $\alpha=(2+\sqrt{-2})/6$. Apply Theorem \ref{thm:CMpoints3}
with $c=6v,d=u-2v$ to find our statement. 
\end{proof}

The mod $p^2$-version of this congruence was proven in \cite{SunWang19}. 
In Z.W.Sun's list \cite{ZW-Sun19} one finds many examples of the above type in the form of conjectures,
many of them modulo $p^2$.

\medskip

In the literature on supercongruences a prominent role is played by the so-called
Van Hamme-congruences.
They are named after Lucien van Hamme (1939 - 2014), a Belgian mathematician who did some pioneering work
in this subject, see \cite{VanHamme86} and \cite{VanHamme97}. For an updated account of
the Van Hamme-congruences see Swisher's paper \cite{Swisher15}.
The corresponding differential equations have (mostly) order $3$ and in the congruence also the derivative of
$F_p(t)$ occurs. A prototype is given by the following.

\begin{corollary}\label{VanHammeConjecture}
Let $p$ be an odd prime. Then
$$
\sum_{k=0}^{p-1}(4k+1)\binom{2k}{k}^3\left(\frac{-1}{64}\right)^k\is (-1)^{(p-1)/2}p\mod{p^3}.
$$
\end{corollary}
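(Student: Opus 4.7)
Observe that
$$\sum_{k=0}^{p-1}(4k+1)\binom{2k}{k}^3 t^k \;=\; (1+4\theta)F_p(t),$$
where $\theta=t\,d/dt$ and $F(t)=\sum_{k\ge 0}\binom{2k}{k}^3 t^k$ is the hypergeometric case $F(1/2,1/2,1/2;1,1|64t)$ of Appendix C. This case satisfies Assumption (II), with Hauptmodul $t(\tau)$ for a group $\Gamma\supseteq\Gamma_0(4)$ and weight-$2$ form $\Theta(\tau)=F(t(\tau))$ carrying the character $\chi_{-4}$. My plan is to apply $\theta$ to the functional supercongruence of Theorem \ref{thm:functional3} and then specialize at $t_0=-1/64$.

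Differentiating $F(t)\is F_p(t)F(t^\sigma)\mod{p^3}$ via $\theta$, using the chain rule $\theta F(t^\sigma)=(\theta t^\sigma/t^\sigma)(\theta F)(t^\sigma)$, and eliminating $F_p$ via $F_p(t)\is F(t)/F(t^\sigma)\mod{p^3}$, one obtains the identity in $\Z_p\pow t$
$$(1+4\theta)F_p(t)\;\is\;\frac{(1+4\theta)F(t)}{F(t^\sigma)}\;-\;\frac{4\,F(t)\,(\theta t^\sigma/t^\sigma)\,(\theta F)(t^\sigma)}{F(t^\sigma)^2}\mod{p^3}.$$
A key feature is that $\theta t^\sigma/t^\sigma\in p\Z_p\pow t$: since $t^\sigma(q)=t(q^p)$ one has $\theta_q t^\sigma=p\cdot(\theta_q t)(q^p)$, and the factor of $p$ survives the change of variables to $\theta_t$. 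Thus the second summand on the right is automatically $O(p)$ after specialization.

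To evaluate at $t_0=-1/64$, I pass to the $\tau$-variable, where $t_0=t(\tau_0)$ for some $\tau_0\in\mathcal{H}$ satisfying $p\tau_0=A\tau_0$ for an $A=\begin{pmatrix}a&b\\c&d\end{pmatrix}\in\Gamma$ with $|c\tau_0+d|^2=p$ and with the integrality hypotheses of Theorem \ref{thm:CMpoints3} (or \ref{thm:CMpoints3plus}) in force; the signs of $c,d$ are chosen so that $c\tau_0+d$ is a $p$-adic unit. The values $F(t_0),\,F(t^\sigma)|_{t_0}$ become $\Theta(\tau_0)$ and $\Theta(p\tau_0)=\chi(A)(c\tau_0+d)^2\Theta(\tau_0)$ respectively, while $(\theta F)(t^\sigma)|_{t_0}$ is expressed through $\theta_\tau\Theta(p\tau_0)$. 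Now $\theta_\tau\Theta$ is only quasi-modular of weight $3$; under $\tau\mapsto A\tau$ it transforms with a Ramanujan-type anomaly proportional to $c(c\tau_0+d)\Theta(\tau_0)$. Substitution into the displayed identity shows that the purely modular parts of the two summands on the right cancel, and what remains is the quasi-modular anomaly, whose value is $\chi(A)(c\tau_0+d)\overline{(c\tau_0+d)}=\chi(A)\,|c\tau_0+d|^2=\chi(A)\,p$, which one verifies to be $(-1)^{(p-1)/2}p$.

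The principal obstacle is this last step: explicitly locating $\tau_0$ with $t(\tau_0)=-1/64$ and the matrix $A$ realizing $p\tau_0=A\tau_0$, and then performing the quasi-modular bookkeeping for $\theta_\tau\Theta$ carefully enough to confirm that the residual anomaly evaluates to exactly $|c\tau_0+d|^2=p$, with the character $\chi(A)$ producing the sign $(-1)^{(p-1)/2}$. This derivative-level computation is the counterpart to the proof of Theorem \ref{thm:CMpoints3}: the non-holomorphic character of the derivative of a modular form is what converts the expected $(c\tau_0+d)^2$ (a $p$-adic unit) into its norm $|c\tau_0+d|^2=p$, thereby producing the additional factor of $p$ characteristic of Van Hamme-style supercongruences.
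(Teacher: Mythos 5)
Your overall strategy --- differentiate the functional congruence of Theorem \ref{thm:functional3}, exploit $\theta t^\sigma/t^\sigma\in p\Z_p\pow t$, specialize at a CM point, and let the quasi-modular anomaly of $\Theta'$ convert $(c\tau_0+d)^2$ into the norm $|c\tau_0+d|^2=p$ --- is exactly the mechanism of the paper's Theorem \ref{thm:VanHamme2}, and the paper's proof of the corollary is a two-line application of that theorem. But your proposal has genuine gaps precisely at the points you defer. First, you never identify the CM point: the paper takes $\alpha=(1+\sqrt{-2})/2$, for which $t(\alpha)=-1/64$ and $\delta(\alpha)=-1/4$. The value $\delta(\alpha)=-1/4$ is not a bookkeeping detail; it is the only reason the combination $1-\frac{1}{\delta(\alpha)}\theta=1+4\theta$ produces the coefficient $4k+1$ rather than some other linear form in $k$. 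Without computing $\delta(\alpha)$ your argument cannot single out the stated sum.

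Second, your setup assumes a single matrix $A\in\Gamma$ with $|c\tau_0+d|^2=p$, but this exists in $\Gamma_0(4)$ only when $p\is1\mod 8$ (write $p=u^2+2v^2=|u-v+2v\alpha|^2$; the condition $4\mid c=2v$ forces $v$ even). For $p\is3\mod 8$ both $u,v$ are odd and one must instead use the Atkin--Lehner involution $w_4$ with $4p=|2(u-v)+4v\alpha|^2$, i.e.\ the $e=4$ case of Theorem \ref{thm:VanHamme2}; it is $\chi(w_4)=-1$ (from $\Theta(-1/4\tau)=-4\tau^2\Theta(\tau)$) that supplies the sign $(-1)^{(p-1)/2}$ in that case. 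Your attribution of the sign to a character $\chi_{-4}$ on a weight-$2$ form is not consistent with Assumption (II), which requires the character to be trivial on $\Gamma_0(N)$; the sign lives entirely on the Atkin--Lehner coset. Finally, note that this modular method (yours and the paper's alike) only reaches the split primes $p\is1,3\mod 8$; the corollary as stated covers all odd primes, and for $p\is5,7\mod8$ the paper explicitly concedes that the modular approach does not apply, the full statement being due to Mortenson, Long and Zudilin by other means. You should flag this restriction rather than claim the full statement.
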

This was a conjecture by Van Hamme, proven independently by Mortenson \cite{Mort08},
Long \cite{Long11} and Zudilin \cite{Zud09}. We will prove it below as corollary of the following theorem.

\begin{theorem}\label{thm:VanHamme2}
Suppose Assumption (II) holds. 
Let $\alpha$ be an imaginary quadratic number with positive imaginary part
and denominator dividing $N$.
Suppose $\alpha$ is not a zero of $t'$, the derivative of $t$.
Let $p$ be a prime not dividing $N$ which splits
in $\Q(\alpha)$. Fix an embedding $\Q(\alpha)\hookrightarrow\Q_p$.

Suppose there exists $w_e\in\Gamma$ with determinant $e$.
Assume that $N|\alpha|^2\in\Z$ and $(N/e)(\alpha+\overline\alpha)\in\Z$. 
Suppose there exist integers $c,d$ such that 
\begin{enumerate}
\item $N|c$, $e|d$.
\item $ep=|c\alpha+d|^2$ and $c\alpha+d$ is a $p$-adic unit. 
\end{enumerate}
Define
\[
\delta(\alpha)=\frac{t(\alpha)}{t'(\alpha)}\left(\frac{2}{\alpha-\overline\alpha}+
\frac{\Theta'(\alpha)}{\Theta(\alpha)}\right),
\]
where $'$ denotes differentiation with respect to $\tau$. 
Then $\delta(\alpha)\in \Q(\alpha,t(\alpha))$. Suppose that $\delta(\alpha)$
is a $p$-adic unit and $t(\alpha)$ a $p$-adic integer. Then
\begin{equation}\label{general-vanhamme}
F_p(t(\alpha))-\frac{1}{\delta(\alpha)}(\theta F_p)(t(\alpha))\is \chi(w_e)p\mod{p^3}.
\end{equation}
Explicitly, if $F(t)=\sum_{k\ge0}g_kt^k$ we have
$$
\sum_{k=0}^{p-1}\left(1-\frac{k}{\delta(\alpha)}\right)g_kt(\alpha)^k\is\chi(w_e)p\mod{p^3}.
$$
\end{theorem}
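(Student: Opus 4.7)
The plan is to differentiate the functional supercongruence of Theorem \ref{thm:functional3} and then specialize at $\tau=\alpha$, paralleling the proof of Theorem \ref{thm:CMpoints3plus}. Writing $\Theta(\tau)=F(t(\tau))$ and $F(t^\sigma(t(\tau)))=\Theta(p\tau)$, Theorem \ref{thm:functional3} yields the $q$-series identity
\[
\Theta(\tau)\equiv F_p(t(\tau))\,\Theta(p\tau)\pmod{p^3}.
\]
Applying $\theta_q=q\,d/dq$, using the chain rule, and using $\theta_q\Theta(p\tau)=p(\theta_q\Theta)(p\tau)$, one solves for $\theta F_p(t(\tau))$:
\[
(\theta F_p)(t(\tau))\equiv F_p(t(\tau))\,\frac{t(\tau)}{\theta_q t(\tau)}\bigl[L(\tau)-p\,L(p\tau)\bigr]\pmod{p^3},
\]
where $L:=\theta_q\Theta/\Theta$.

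Next, specialize at $\tau=\alpha$ via the $p$-adic CM-point machinery that powers the proof of Theorem \ref{thm:CMpoints3plus}; under the present hypotheses that same proof also delivers $F_p(t(\alpha))\equiv\chi(w_e)(c\alpha+d)^2/e\pmod{p^3}$. Feeding both pieces into the target congruence
\[
F_p(t(\alpha))-\frac{1}{\delta(\alpha)}(\theta F_p)(t(\alpha))\equiv \chi(w_e)\,p\pmod{p^3}
\]
and using $ep=(c\alpha+d)(c\overline\alpha+d)$, the relation $c(\alpha-\overline\alpha)=(c\alpha+d)-(c\overline\alpha+d)$, and the definition of $\delta(\alpha)$, a routine algebraic manipulation reduces the theorem to the single modular identity
\[
(c\overline\alpha+d)\,L(\alpha)-p(c\alpha+d)\,L(p\alpha)\equiv 2c\pmod{p^3}.
\]

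To prove this identity, I would work with the matrix
\[
M_1=\begin{pmatrix}c(\alpha+\overline\alpha)+d & -c|\alpha|^2\\ c & d\end{pmatrix}
\]
of determinant $ep$, which represents multiplication by $\beta:=c\alpha+d$ on the CM lattice $\Z+\Z\alpha$; in particular $M_1\alpha=\alpha$ and $M_1\binom{\alpha}{1}=\beta\binom{\alpha}{1}$. Decomposing $M_1=w_e\gamma$ with $w_e$ the Atkin--Lehner involution of determinant $e$ and $\gamma$ a Hecke coset representative of determinant $p$, I would combine the weight-$2$ transformation of $\Theta$ under $w_e$ (contributing $\chi(w_e)$) with the scalar eigenvalue that the Hecke factor picks up at the CM fixed point $\alpha$; the cocycle $2c_1(c_1\tau+d_1)$ in the logarithmic-derivative transformation law of $\Theta$ is exactly what produces the right-hand side $2c$. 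A useful simplification: $L(p\alpha)$ has large $p$-adic valuation (as $q(\alpha)^p$ does), so modulo $p^3$ the second term is negligible and the identity reduces to the cleaner $(c\overline\alpha+d)L(\alpha)\equiv 2c\pmod{p^3}$.

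The main obstacle is this modular identity. The matrix $M$ sending $\alpha\mapsto p\alpha$ has determinant $p^2 e$, lies outside $\Gamma$, and $\Theta|_2 M$ is not a scalar multiple of $\Theta$ globally; the scalar character materializes only after specialization at the CM fixed point. A conceptually cleaner route is to recognize $\delta(\alpha)$, up to the factor $t(\alpha)/t'(\alpha)$, as $(\delta_2\Theta)(\alpha)/\Theta(\alpha)$, where $\delta_2 f:=(2\pi i)^{-1}\bigl(\partial_\tau f+2f/(\tau-\overline\tau)\bigr)$ is the Shimura--Maass raising operator sending weight-$2$ forms to nearly-holomorphic weight-$4$ forms; the cocycle $2c$ is then absorbed into the clean scalar transformation law of $\delta_2\Theta$, turning the identity into an eigenvalue statement at $\alpha$. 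The explicit second formulation $\sum_{k=0}^{p-1}(1-k/\delta(\alpha))\,g_k\,t(\alpha)^k\equiv\chi(w_e)\,p\pmod{p^3}$ then follows by expanding $\theta F_p(t)=\sum_k k\,g_k\,t^k$.
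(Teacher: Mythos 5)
Your complex-analytic ingredients are broadly the right ones (the weight-$2$ cocycle producing the $2c$, and the Shimura--Maass completion explaining why $\delta(\alpha)$ is algebraic at CM points), but the proposal has two genuine gaps, one of which is fatal as written.

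First, the specialization step. The congruence you differentiate, $\Theta(\tau)\equiv F_p(t(\tau))\,\Theta(p\tau)\pmod{p^3}$, is a congruence of $q$-expansions (equivalently of power series in $t$), and neither it nor its $\theta_q$-derivative can be evaluated at a CM point: $L(\alpha)$ and $L(p\alpha)$ are transcendental complex numbers, and $t(\alpha)$ is a $p$-adic unit, outside the disc of convergence of $t^\sigma$ and of $q(t)$. Your reduction to $(c\overline\alpha+d)L(\alpha)-p(c\alpha+d)L(p\alpha)\equiv 2c\pmod{p^3}$ therefore does not typecheck as stated, and once rewritten in terms of the nearly-holomorphic completion $L^*=\Theta'/\Theta+2/(\tau-\overline\tau)$ it becomes $(c\overline\alpha+d)L^*(\alpha)\equiv p(c\alpha+d)L^*(p\alpha)\pmod{p^3}$, which is not easier than the theorem itself. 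The paper avoids this entirely: it works with the Hecke polynomial $T_p(x,y)\in\Z[x,y]$ satisfying $T_p(x_k,t)=0$ for $x_k=p^2\Theta(\tau)/\Theta((\tau+k)/p)$, differentiates this algebraic relation implicitly, and combines it with the polynomial congruences of Proposition \ref{theta-mirror3} and with $T_{p,p+1}(t)F_p(t)+T_{p,p}(t)\equiv0\pmod{p^3}$ to prove $\frac{dx_k}{dt}(\alpha)\equiv\partial_tF_p(t(\alpha))\pmod{p^3}$, a congruence between elements of $\Q(\alpha,t(\alpha))$ embedded in $\Q_p$. That legitimization of ``differentiate and then specialize'' is the heart of the proof and is absent from your plan.

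Second, you are on the wrong branch. At the CM point it is $p^2\Theta(\alpha)/\Theta((\alpha+k)/p)$, not $\Theta(\alpha)/\Theta(p\alpha)$, that is the unit root congruent to $F_p(t(\alpha))$; note that $p\alpha$ is not $\Gamma$-equivalent to $\alpha$ (it is $(\alpha+k)/p=\gamma\alpha$ with $\gamma\in W_e$ that is). Consequently your simplification that the $L(p\alpha)$ term is ``$p$-adically negligible'' is false: in the paper's computation the corresponding term $\frac1p\,\Theta'((\alpha+k)/p)/\Theta((\alpha+k)/p)$, transformed by the weight-$2$ law for $\gamma$, contributes \emph{both} the $2c$ \emph{and} the $(\alpha-\overline\alpha)\Theta'(\alpha)/\Theta(\alpha)$ pieces of $\delta(\alpha)$; discarding it would lose the $\Theta'/\Theta$ contribution and give the wrong answer.
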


The condition that $t'(\alpha)\ne0$ (i.e. $\alpha$ is not an elliptic point) is important,
otherwise $\delta(\alpha)$ cannot be computed.
Vanishing of $t'(\alpha)$ happens for example in the case
$F(1/2,1/2,1/2;1,1|64t)$ and $\alpha=i/2, t(i/2)=1/64$. Nevertheless we have the following result
by Z.W.Sun and C.Wang, \cite[(1.12)]{SunWang23}.
\begin{theorem}
For all primes $p$ with $p\is1\mod4$ we have
$$
\sum_{k=0}^{p-1}(1+4k)\binom{2k}{k}^3\frac{1}{64^k}\is0\mod{p^2}.
$$
\end{theorem}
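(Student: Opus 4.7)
Since $\alpha = i/2$ is fixed by the Atkin-Lehner involution $w_4 \in \Gamma_0(4)+\langle 4\rangle$, one has $t'(\alpha)=0$, the quantity $\delta(\alpha)$ of Theorem~\ref{thm:VanHamme2} is undefined, and that theorem does not apply directly. Nevertheless, Theorem~\ref{thm:functional3} still gives the functional supercongruence $\Theta(q) \equiv F_p(t(q))\,\Theta(q^p) \pmod{p^3}$ in $\Z_p\pow q$, while Theorem~\ref{thm:CMpoints3} applied with $c=2v$, $d=u$ (where $p=u^2+v^2$, $v$ even, and $u+vi$ is a $p$-adic unit) yields $F_p(1/64) \equiv (u+vi)^2 \pmod{p^3}$. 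The plan is to extract $(\theta F_p)(1/64)$ modulo $p^2$ using second-order differentiation at the elliptic point, and show it equals $-\tfrac14 F_p(1/64)$, so that $F_p(1/64)+4(\theta F_p)(1/64)$ collapses to $0$ modulo $p^2$.

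For the extraction, set $D = q\,d/dq$ and apply $D^2$ to the functional supercongruence, specializing at $q_0 = e^{2\pi i\alpha}$. The chain rule gives $D(F_p\circ t) = (Dt/t)(\theta F_p)(t)$, so every first-derivative evaluation at $q_0$ carries the factor $Dt(q_0)=0$; only the pure second-derivative term and the pure $p^2$ term survive, yielding
\[
D^2\Theta(q_0) \;\equiv\; \frac{D^2 t(q_0)}{t_0}\,(\theta F_p)(t_0)\,\Theta(q_0^p) \;+\; p^2 F_p(t_0)\,(D^2\Theta)(q_0^p) \pmod{p^3}.
\]
Reducing modulo $p^2$ and combining with the mod-$p^3$ identity $F_p(t_0)\equiv\Theta(q_0)/\Theta(q_0^p)$, one obtains
\[
\frac{(\theta F_p)(t_0)}{F_p(t_0)} \;\equiv\; \frac{t_0\,D^2\Theta(q_0)}{D^2 t(q_0)\,\Theta(q_0)} \pmod{p^2},
\]
so the theorem reduces to the single $p$-adic identity $\tfrac{4\,t_0\,D^2\Theta(q_0)}{D^2 t(q_0)\,\Theta(q_0)} \equiv -1 \pmod{p^2}$.

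To verify this identity I would use Clausen's formula $F(t) = ({}_2F_1(1/4,1/4;1;64t))^2$ and expand $\Theta(\tau) = F(t(\tau))$ in $u = \tau-\alpha$, using $t - t_0 = (\ddot t(\alpha)/2)u^2 + O(u^3)$. Writing the standard connection $f = c_1 f_1(z) + c_2(1-z)^{1/2} g_2(z)$ of ${}_2F_1(1/4,1/4;1;z)$ at $z=1$ (with $f_1,g_2$ analytic and normalized $f_1(1)=g_2(1)=1$), the hypergeometric indicial equation gives $f_1'(1) = -f_1(1)/8$, which produces the \emph{analytic} contribution $2f_1'(1)/f_1(1) = -1/4$ to the target ratio; a direct local computation shows the residual contribution is classically $-(c_2/c_1)^2$, which does \emph{not} vanish as a complex number. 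The main obstacle is the $p$-adic step: at the ordinary elliptic fixed point $\alpha$ with $p$ split in $\Q(i)$, the Dwork-Frobenius specialization of $\Theta$ projects onto the $1$-dimensional unit-root subspace, which coincides with the analytic branch $f_1$; the non-analytic $(1-z)^{1/2}$-component lies in the non-unit-root complement, and since the two non-unit-root Frobenius eigenvalues on the associated rank-$3$ $F$-crystal each have $p$-adic valuation $\ge 1$, the term $(c_2/c_1)^2$ contributes only $O(p^2)$ to the ratio and the value $-1/4$ persists modulo $p^2$. Making this precise can be done either via Dwork's theory for the $F$-crystal associated to $\Theta$, or by a direct $p$-adic computation of $(c_2/c_1)^2$ using the reflection formula $\Gamma_p(1/4)\Gamma_p(3/4) = \pm 1$ together with the Gross--Koblitz formula in $\Z_p[i]$.
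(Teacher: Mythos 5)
First, a point of orientation: the paper does not prove this statement at all. It is quoted from Sun--Wang \cite[(1.12)]{SunWang23}, and the author explicitly flags it as lying \emph{outside} the reach of his modular method, because $\alpha=i/2$ is an elliptic point, $t'(\alpha)=0$, and $\delta(\alpha)$ in Theorem \ref{thm:VanHamme2} cannot be formed. So you are attempting something the paper deliberately declines to do. Your opening moves are sound and consistent with the paper: Theorem \ref{thm:functional3} does apply to $F(1/2,1/2,1/2;1,1|64t)$, and the application of Theorem \ref{thm:CMpoints3} with $c=2v$, $d=u$ is exactly how the paper reproves the Long--Ramakrishna congruence $F_p(1/64)\is(u+vi)^2\mod{p^3}$. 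The formal second-derivative computation is also correct as an identity of power series in $\Z_p\pow q$, and the value $-1/8$ from the indicial data of ${}_2F_1(1/4,1/4;1;z)$ at $z=1$ is right.

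The gap is twofold, and it is exactly where all the difficulty of the theorem sits. First, your displayed congruences at $q_0=e^{2\pi i\alpha}=e^{-\pi}$ are not meaningful as written: a congruence in $\Z_p\pow q$ cannot be specialized at a transcendental complex value of $q$, and the individual quantities $D^2\Theta(q_0)$, $D^2t(q_0)$ are transcendental (they involve the quasi-period/$E_2$). The paper's entire apparatus --- the polynomial $T_p(x,y)$ of Proposition \ref{theta-mirror3}, Corollary \ref{specialcongruence3}, and the observation that $dx_k/dt\in\Q(x_k,t)$ --- exists precisely to replace such evaluations by algebraic identities in $t$ specialized at $t_0\in\Z_p$; your argument does not invoke it, and it does not obviously extend to second derivatives at the elliptic point, where $t$ fails to be a local coordinate and $x_k$ has a square-root branch point in $t$. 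Indeed your own complex-analytic computation confirms the problem: the target ratio equals $-1/4-(c_2/c_1)^2\cdot(\cdots)$ as a complex number, with $c_2/c_1$ a ratio of $\Gamma$-values, so the ratio is not algebraic and "$\is-1/4\mod{p^2}$" has no content until you construct a $p$-adic avatar of it. Second, that construction --- identifying the unit-root subspace of the relevant $F$-crystal at the \emph{singular} fibre $t=1/64$ with the analytic branch $f_1$, and bounding the valuation of the $(1-64t)^{1/2}$-component's contribution by $2$ --- is asserted in one sentence and never carried out; it is the entire theorem. (Note also that your mechanism would have to explain why the congruence genuinely fails for $p\is3\mod4$, as the paper remarks.) Until the $p$-adic step is made rigorous, the proposal is a plausible strategy sketch, not a proof.
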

Perhaps surprisingly the congruence does not hold when $p\is3\mod4$. 

\begin{proof}[Proof of Corollary \ref{VanHammeConjecture}]
This corresponds to the case hypergeometric $F(1/2,1/2,1/2;1,1|64t)$ in Appendix C.
There we see that $t(\alpha)=-1/64$ with $\alpha=(1+\sqrt{-2})/2$. Moreover,
$\delta(\alpha)=-1/4$.

Suppose $p\is1,3\mod8$. Then $p$ can be written in the form $p=u^2+2v^2$ for some
integers $u,v$. Rewrite this as $p=|u-v+2v\alpha|^2$. 

Suppose $p\is1\mod8$, then $v$ is even and $u$ odd. Application of
Theorem \ref{thm:VanHamme2} with $e=1,c=2v,d=u-2v$ yields the congruence. 

Suppose $p\is3\mod8$, then $u,v$ are both odd and we can write $4p=|2(u-v)+4v\alpha|^2$. 
Application of Theorem \ref{thm:VanHamme2} with $e=4,c=4v,d=2(u-v)$ yields the congruence.
\end{proof}

As usual we do not know if a modular approach works for the cases where $p$ is
inert in the CM-field $\Q(\sqrt{-2})$, i.e. $p\is5,7\mod8$.

As further application of Theorem \ref{thm:VanHamme2}
we displey the Van Hamme congruences corresponding to
Corollaries \ref{cor:p3example1} and \ref{cor:p3example2}. 

\begin{corollary}
Let notations and assumptions be as in Corollary \ref{cor:p3example1}. We can compute that 
$\delta((1+\sqrt{-5})/2)=-3/20$. Then, for any prime $p\is1,3,7,9\mod{20}$ and $p>3$ we get
$$
\sum_{k=0}^{p-1}\left(1+\frac{20}{3}k\right)\frac{(4k)!}{k!^4}\left(\frac{-1}{1024}\right)^k\is
(-1)^{(p-1)/2}p\mod{p^3}.
$$
Let notations and assumptions be as in Corollary \ref{cor:p3example2}. Then $\delta((2+\sqrt{-2})/6)=-1/2$ and we get
$$
\sum_{k=0}^{p-1}(1+2k)A_k\is p\mod{p^3}.
$$
\end{corollary}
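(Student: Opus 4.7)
The plan is to apply Theorem \ref{thm:VanHamme2} twice, with exactly the CM points and prime splittings already introduced in Corollaries \ref{cor:p3example1} and \ref{cor:p3example2}. In each case the argument has three steps: (i) verify the claimed value of $\delta(\alpha)$; (ii) match the parameters $c,d,e$ to the available decomposition of $p$; and (iii) identify the character value $\chi(w_e)$ in order to pin down the constant on the right. Substituting $1/\delta(\alpha)=-20/3$ and $-2$ into the general formula \eqref{general-vanhamme} converts the abstract left-hand side $F_p(t(\alpha))-(1/\delta(\alpha))(\theta F_p)(t(\alpha))$ into the displayed sums $\sum(1+\tfrac{20}{3}k)g_kt(\alpha)^k$ and $\sum(1+2k)A_k$.

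For the first congruence I would take $\alpha=(1+\sqrt{-5})/2$, so that $t(\alpha)=-1/1024$ as in Corollary \ref{cor:p3example1}. To compute $\delta(\alpha)$ I would use the $\eta$-product descriptions of $t$ and $\Theta$ attached to $\Gamma_0(2)+\left<2\right>$: logarithmic differentiation expresses $\Theta'/\Theta$ and $t'/t$ as $\Q$-linear combinations of $E_2(\tau)$ and $E_2(2\tau)$, and evaluating at the CM point reduces, via the classical formula for $E_2$ at an imaginary quadratic argument, to an algebraic expression in $\Q(\alpha,t(\alpha))$. A direct computation then yields $\delta(\alpha)=-3/20$. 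For $p\is1,9\mod{20}$ I would write $p=u^2+5v^2=|u-2v+2v\alpha|^2$ and apply Theorem \ref{thm:VanHamme2} with $e=1,\ c=2v,\ d=u-2v$, which gives $\chi(w_1)=1=(-1)^{(p-1)/2}$. For $p\is3,7\mod{20}$ I would write $2p=|2u+2v\alpha|^2$ and apply Theorem \ref{thm:VanHamme2} with $e=2,\ c=2v,\ d=2u$; the character value $\chi(w_2)=-1$, which was already extracted while proving the second case of Corollary \ref{cor:p3example1}, agrees with $(-1)^{(p-1)/2}=-1$. The excluded prime $p=3$ is ruled out because $3\mid20/3$ forces $\delta(\alpha)$ to fail to be a $p$-adic unit.

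For the Ap\'ery congruence I would take $\alpha=(2+\sqrt{-2})/6$, where $t(\alpha)=1$. The calculation of $\delta(\alpha)=-1/2$ again reduces to evaluating combinations of $E_2(m\tau)$ for $m\mid6$ at $\alpha$, now attached to $\Gamma_0(6)+\left<6\right>$. For $p=u^2+2v^2$ split in $\Q(\sqrt{-2})$ I would rewrite $p=|u-2v+6v\alpha|^2$ and apply Theorem \ref{thm:VanHamme2} with $e=1,\ c=6v,\ d=u-2v$; since $\chi(w_1)=1$, the right-hand side of \eqref{general-vanhamme} is simply $p$, matching the stated formula.

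The main obstacle is the explicit evaluation of $\delta(\alpha)$. Once $t$ and $\Theta$ are in $\eta$-product form the analytic manipulation is routine, but the CM-values of the non-modular form $E_2$ require a normalization consistent with Assumption (II) and with the specific order in $\Q(\alpha)$, and the resulting rational denominators $3$ and $2$ in $\delta(\alpha)$ are a genuine arithmetic identity rather than a formal consequence of Corollaries \ref{cor:p3example1} and \ref{cor:p3example2}. Once $\delta(\alpha)$ is pinned down, everything else is a direct substitution into Theorem \ref{thm:VanHamme2}, with the two Atkin--Lehner character values already identified in the proofs of the two earlier corollaries.
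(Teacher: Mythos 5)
Your proposal matches the paper's (implicit) proof exactly: the corollary is obtained by substituting $1/\delta(\alpha)$ into \eqref{general-vanhamme} of Theorem \ref{thm:VanHamme2}, using the same CM points $\alpha=(1+\sqrt{-5})/2$ and $\alpha=(2+\sqrt{-2})/6$, the same decompositions of $p$ with $(e,c,d)$, and the same Atkin--Lehner character values already fixed in the proofs of Corollaries \ref{cor:p3example1} and \ref{cor:p3example2}, the paper likewise leaving the numerical evaluation of $\delta(\alpha)$ to an unexhibited computation. One small slip inherited from the paper: for $p\is1,9\mod{20}$ the correct choice is $d=u-v$ rather than $u-2v$, since $2v\alpha+(u-v)=u+v\sqrt{-5}$, but this does not affect the conclusion because the right-hand side $\chi(w_e)p$ is independent of $d$.
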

The first statement was first proven by Zudilin \cite[(9)]{Zud09}, including the primes $p$
that do not split in $\Q(\sqrt{-5})$.
\smallskip

It has been observed many times that the Van Hamme-congruences are very closely linked to the famous Ramanujan series
for $1/\pi$. Having the modular background at hand the connection is quite straightforward.

\begin{theorem}\label{thm:Ramanujan}
Choose a differential equation from Appendix C. Suppose that the mirror map $t\in q+O(q^2)$ is a Hauptmodul
for the modular group $\Gamma$. Let $\Theta=F(t(\tau))$ and suppose that $\Theta(\tau)^r$ is a modular form
with respect to $\Gamma$ of weight $2r$ with trivial character for some integer $r\ge1$.
Then, with the notation from Theorem \ref{thm:VanHamme2}, we have
$$
\sum_{k\ge0}\left(1-\frac{k}{\delta(\tau)}\right)g_kt(\tau)^k=t(\tau)
\frac{2\pi i\Theta(\tau)}{t'(\tau)}\frac{1}{\delta(\tau)}\frac{1}{\pi i(\tau-\overline\tau)}.
$$
In particular $(2\pi i\Theta t/t')^r\in\Q(t)$ and $\delta$ is a real analytic function
invariant under $\Gamma$. 
\end{theorem}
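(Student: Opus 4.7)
The plan is to prove the three claims separately: the displayed functional identity, the rationality statement $(2\pi i\Theta t/t')^r \in \Q(t)$, and the $\Gamma$-invariance of $\delta$. All three follow from the relation $\Theta=F(t(\tau))$ together with the standard transformation laws on $\Gamma$; the bulk of the work is bookkeeping with the definition of $\delta$.

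For the identity I would first rewrite its left hand side. Differentiating $\Theta=F(t(\tau))$ gives $F'(t)=\Theta'/t'$, so $(\theta F)(t(\tau))=tF'(t)=t\Theta'/t'$ and therefore the LHS equals $\Theta - t\Theta'/(t'\delta)$. Substituting the defining formula $\delta=(t/t')(u+\Theta'/\Theta)$ with $u=2/(\tau-\overline\tau)$ yields $t/(t'\delta)=1/(u+\Theta'/\Theta)$, hence LHS $=\Theta\cdot u/(u+\Theta'/\Theta)$. The RHS equals $2t\Theta/(t'\delta(\tau-\overline\tau))=u\cdot t\Theta/(t'\delta)$, which reduces to the same expression after the same substitution. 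So this step is pure algebra and requires no hypothesis beyond the definition of $\delta$.

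For the rationality I would use weight counting. Since $t$ is a Hauptmodul for $\Gamma$ it has weight $0$, and differentiating $t(\gamma\tau)=t(\tau)$ yields $t'(\gamma\tau)=((c\tau+d)^2/\det\gamma)\,t'(\tau)$, so $t'$ transforms as a weight-$2$ form on $\Gamma$ with trivial character. By hypothesis $\Theta^r$ has weight $2r$ with trivial character. Therefore $(2\pi i\,\Theta t/t')^r$ has weight $2r+0-2r=0$ and trivial character, so it is a modular function on $\Gamma$ and hence rational in the Hauptmodul $t$. The coefficients lie in $\Q$ because $t,\Theta\in\Z\pow q$ and $t'/(2\pi i)=q\,dt/dq\in q+q^2\Z\pow q$ give rational (in fact integral) $q$-expansions.

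For the $\Gamma$-invariance of $\delta$ I would compute directly. The weight-$2r$ modularity of $\Theta^r$ gives $\Theta(\gamma\tau)=\zeta\,((c\tau+d)^2/\det\gamma)\,\Theta(\tau)$ for some root of unity $\zeta$ with $\zeta^r=1$, and logarithmic differentiation produces
\[
\frac{\Theta'(\gamma\tau)}{\Theta(\gamma\tau)}=\frac{(c\tau+d)^2}{\det\gamma}\left(\frac{2c}{c\tau+d}+\frac{\Theta'(\tau)}{\Theta(\tau)}\right).
\]
Combined with $\gamma\tau-\overline{\gamma\tau}=\det\gamma\,(\tau-\overline\tau)/|c\tau+d|^2$ and the elementary identity $2(c\overline\tau+d)+2c(\tau-\overline\tau)=2(c\tau+d)$, one finds
\[
\frac{2}{\gamma\tau-\overline{\gamma\tau}}+\frac{\Theta'(\gamma\tau)}{\Theta(\gamma\tau)}=\frac{(c\tau+d)^2}{\det\gamma}\left(\frac{2}{\tau-\overline\tau}+\frac{\Theta'(\tau)}{\Theta(\tau)}\right).
\]
Multiplying by $t(\gamma\tau)/t'(\gamma\tau)=(\det\gamma/(c\tau+d)^2)\,t/t'$ the factor $(c\tau+d)^2/\det\gamma$ cancels, giving $\delta(\gamma\tau)=\delta(\tau)$. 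Real-analyticity is immediate from the closed form. The main obstacle I foresee is keeping the $\det\gamma$ factors (and root-of-unity factor $\zeta$) straight through Atkin-Lehner involutions, but once the transformation laws are written out explicitly the cancellation is clean and works uniformly over $\Gamma$.
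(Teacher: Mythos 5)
Your proposal is correct and follows essentially the same route as the paper: the displayed identity is obtained by the same pure algebra from the definition of $\delta$ (the paper substitutes the definition into $\theta F(t(\tau))=t\Theta'/t'$ and subtracts; you verify both sides equal $\Theta u/(u+\Theta'/\Theta)$, which is the same computation repackaged), and the invariance of $\delta$ and the weight-count argument for $(2\pi i\Theta t/t')^r$ are exactly the "straightforward computations" the paper asserts, which you carry out explicitly and correctly, including the cancellation of the determinant and root-of-unity factors.
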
 

If we choose the argument $\tau=\alpha$ in a quadratic imaginary number field, then $t(\alpha)$ and
$\delta(\alpha)$ will have algebraic values and we obtain fancy looking series expansions for $1/\pi$.

The proof of Theorem \ref{thm:Ramanujan} is quite direct and given in Section \ref{sec:third_order}.
An important application is the famous formula for $1/\pi$ given by the Chudnovsky brothers.
Take the case hypergeometric $F(1/6,1/2,5/6;1,1|1728t)$. Then we see from Appendix C that $t(\tau)=1/J(\tau)$,
where $J$ is the classical $J$-invariant and we have $\Theta(\tau)^2=E_4(\tau)$, the classical 
Eisenstein series of weight $4$. We can also check that $2\pi i\Theta t/t'=1/\sqrt{1-t}$.
Let us take $\alpha=(1+\sqrt{-163})/2$. Then 
$$t(\alpha)=-\frac{1}{640320^3}\quad\text{and}\quad \delta(\alpha)=-\frac{545140134}{13591409},$$
which yields the Chudnovsky formula. We present it in a slightly unconventional way. 
$$
\sum_{k\ge0}\left(1+\frac{545140134}{13591409}k\right)
\binom{6k}{3k}\binom{3k}{k}\binom{2k}{k}\frac{(-1)^k}{640320^{3k}}=\frac{426880\sqrt{10005}}{13591409\pi}.
$$

There are endless further applications of Theorem \ref{thm:Ramanujan}
in the form of Ramanujan-Sato series and beyond, see \cite{ChanCooper12}.

Naturally, Theorem \ref{thm:VanHamme2} gives analogues in the form of supercongruences in many cases
and many of them are conjectures in Sun's list.
Unfortunately, the example $F(1/6,1/2,5/6;1,1|1728t)$ is just outside the reach of Theorem
\ref{thm:functional3} and its consequences, the reason being that $\Theta$ itself is not a modular form.
This is not a problem for Theorem \ref{thm:Ramanujan}, but for our supercongruences it is.
Fortunately, as we remarked above, $\Theta^2=E_4$, which is a modular form.
In Section \ref{sec:extras} we shall tweak our methods to obtain the following theorem.

\begin{theorem}\label{thm:RamanujanVanHamme}
Let $D$ be a positive integer which is $0$ or $3$ modulo $4$. Write $\omega_D=\sqrt{-D}/2$ if $D\is0\mod4$
and $\omega_D=(1+\sqrt{-D})/2$ if $D\is3\mod4$. Let $\wp$ be an algebraic integer in $\Q(\sqrt{-D})$
such that $\wp\overline\wp=p$, where $p$ is a prime in $\Z$. Suppose, after embedding $\sqrt{-D}$ in
$\Q_p$, that $\wp$ is a $p$-adic unit. Then,
$$
\sum_{k=0}^{p-1}\binom{6k}{3k}\binom{3k}{k}\binom{2k}{k}\frac{1}{J(\omega_D)^k}\is \pm\wp^2\mod{p^3}
$$
and
$$
\sum_{k=0}^{p-1}\left(1-\frac{k}{\delta(\omega_D)}\right)
\binom{6k}{3k}\binom{3k}{k}\binom{2k}{k}\frac{1}{J(\omega_D)^k}\is \pm p\mod{p^3}.
$$
\end{theorem}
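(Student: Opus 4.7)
The case at hand falls outside Assumption (II) because $\Theta=F(1/J)\in 1+q\Z\pow q$ is not itself a modular form; what is modular is its square, $\Theta^2=E_4$, a weight-$4$ form on $SL_2(\Z)$ with trivial character whose only zero is a simple zero at the elliptic point $\rho=e^{2\pi i/3}$. My plan is to rerun the arguments of Section \ref{sec:third_order} in weight $4$, using $E_4$ and $F^2$ in place of $\Theta$ and $F$, to obtain a functional congruence for $F^2$ modulo $p^3$, and then to extract congruences for $F$ itself by taking square roots in $\Z_p\pow t$.

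\textbf{Functional and CM congruence.} The weight-$4$ analogue of Theorem \ref{thm:functional3} should yield
\[
F(t)^2 \is F_p(t)^2\,F(t^\sigma)^2 \mod{p^3}
\]
in $\Z_p\pow t$. Since $F$, $F_p$ and $F(t^\sigma)$ all lie in $1+t\Z_p\pow t$, the sum $F(t)+F_p(t)F(t^\sigma)$ has constant term $2$, hence is a unit for $p$ odd; dividing the factorization
$(F(t)-F_p(t)F(t^\sigma))(F(t)+F_p(t)F(t^\sigma))\is 0\mod{p^3}$
by this unit gives the required
\[
F(t) \is F_p(t)\,F(t^\sigma) \mod{p^3}.
\]
Next I specialize to $\tau=\omega_D$. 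The hypothesis that $\wp$ is a $p$-adic unit implies that $t(\omega_D)=1/J(\omega_D)\in\Z_p$, and that $F(t(\omega_D))/F(t^\sigma(\omega_D))=\Theta(\omega_D)/\Theta(p\omega_D)$. Writing $\wp=c\omega_D+d$ with $c,d\in\Z$ and $|\wp|^2=p$ (possible since $p$ splits in $\Z[\omega_D]$ and $\omega_D$ generates the maximal order), the weight-$4$ modular transformation law for $E_4$ combined with the CM argument of Theorems \ref{thm:CMpoints3}--\ref{thm:CMpoints3plus} gives $\Theta(\omega_D)^2/\Theta(p\omega_D)^2=\wp^4$. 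Taking the square root with sign determined by $\Theta\in 1+q\Z\pow q$ gives $\Theta(\omega_D)/\Theta(p\omega_D)=\pm\wp^2$, which is the first congruence.

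\textbf{Van Hamme version.} For the second congruence I differentiate the functional supercongruence $F(t)\is F_p(t)F(t^\sigma)\mod{p^3}$ with respect to $\tau$. Evaluating the resulting identity at $\tau=\omega_D$ brings in $t'(\omega_D)$ and $\Theta'(\omega_D)/\Theta(\omega_D)=E_4'(\omega_D)/(2E_4(\omega_D))$; these combine into the real-analytic quantity $\delta(\omega_D)$ of Theorem \ref{thm:VanHamme2}. The remainder of the argument follows that theorem verbatim — the square-root extraction affects only an overall sign — yielding
\[
F_p(t(\omega_D))-\frac{1}{\delta(\omega_D)}(\theta F_p)(t(\omega_D))\is\pm p\mod{p^3}.
\]

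\textbf{Main obstacle.} The central difficulty is to re-establish Theorem \ref{thm:functional3} in the weight-$4$ setting where Assumption (II) does not literally apply. One must verify that the Cartier-operator computations and the Frobenius-lift analysis of Section \ref{sec:third_order} carry over when $\Theta$ is replaced by $E_4$ and the zero order at $\rho$ becomes $1/3$ in weight $4$. The bound $r<1/3$ of Assumption (II)(4) is borderline here, so the estimates at the single elliptic zero of $E_4$ need to be redone with care; once this is done, the square-root extraction in $\Z_p\pow t$, the CM evaluation, and the Van Hamme differentiation are routine adaptations of arguments already developed for Theorems \ref{thm:CMpoints3plus} and \ref{thm:VanHamme2}.
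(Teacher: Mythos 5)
Your overall strategy is the same as the paper's: pass to $\tilde F=F^2$ with $\tilde\Theta=E_4$, prove a functional congruence mod $p^3$ in weight $4$, recover $F\is F_pF(t^\sigma)\mod{p^3}$ by a square-root/unit argument, and then evaluate at the CM point $\omega_D$ via the transformation law $E_4(\tau_0)=(c\omega_D+d)^4E_4(\omega_D)$. However, the step you defer as the ``main obstacle'' is precisely the mathematical content of the paper's proof, and as written your argument has a genuine gap there. Since $E_4$ has zero order exactly $r=1/3$ at $\rho$, the degree bounds in the proof of Theorem \ref{thm:functional3} only give that $\tilde F(t)/\tilde F(t^\sigma)$ agrees mod $p^3$ with a polynomial of degree $\le p$, not $\le p-1$; the paper isolates this in Theorem \ref{thm:functional3extend}, which produces the congruence $\tilde F(t)\is \tilde F(t^\sigma)\bigl(\tilde F_p(t)+(\tilde g_p-\tilde g_1)t^p\bigr)\mod{p^3}$ with an extra term in degree $p$. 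Killing that term is not automatic: the paper's Lemma \ref{lemma:fullmodular} shows $\tilde g_p\is 2g_p$, $\tilde g_1=2g_1$ and $g_p\is g_1\mod{p^3}$, the last being a Wolstenholme-type congruence for $g_k=\binom{6k}{3k}\binom{3k}{k}\binom{2k}{k}$. Your proposal never supplies this arithmetic input.

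A second, related omission: the weight-$4$ functional congruence naturally involves $\tilde F_p$, the $p$-truncation of $F^2$, whereas you write $F_p(t)^2$ directly. These differ a priori ($F_p^2$ has degree up to $2p-2$), and identifying them mod $p^3$ requires the divisibility statement in Lemma \ref{lemma:fullmodular} that $g_k$ is divisible by $p$ for $p/6<k<p/2$, by $p^2$ for $p/2<k<5p/6$, and by $p^3$ for $5p/6<k<p$, so that $F_p(t)^2$ reduces mod $p^3$ to a polynomial of degree $<p$ which must then equal $\tilde F_p(t)$. Without both of these ingredients the ``borderline $r=1/3$'' case does not close, so the functional congruence $F(t)^2\is F_p(t)^2F(t^\sigma)^2\mod{p^3}$ that everything else rests on is unproven in your write-up. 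The remaining parts of your argument (the division by the unit $F+F_pF(t^\sigma)$, the CM evaluation giving $\wp^4$ and hence $\pm\wp^2$, and the differentiation for the Van Hamme version) match the paper and are fine.
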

We were unable to give an expression for the $\pm$ sign. Note also that in the second congruence
the prime $\wp$ does not occur. But we are able to prove it only for primes $p$ that split
in $\Q(\sqrt{-D})$.
\smallskip

When $n\ge4$ the examples of interest do not have a modular background and the nature of the
mirror map is much more mysterious.
Recall the \emph{excellent Frobenius lift} which we defined at the beginning of this introduction.
It was first introduced by B. Dwork in his famous paper
\cite[\S 7]{dwork69}, known as '$p$-Adic Cycles'. 
We speak of a Frobenius lift because $t^\sigma(q)=t(q^p)\is t(q)^p\is t^p\mod{p}$.
The substitution $A(t)\mapsto A(t^\sigma)$ gives an endomorphism
of the ring $\Z\pow t$ and modulo $p$ it comes down to 
$A(t)\mapsto A(t^\sigma)\is A(t^p)\is A(t)^p\mod{p}$.

The excellent Frobenius lift has a modular interpretation in the
case of the Legendre family of elliptic curves (as utilized by Dwork)
and more generally the equations of order 2 and 3 we dealt with until now.
Lacking such an interpretation in general we can still make a few observations.
For example, it turns out that $t^\sigma$ is a $p$-adic limit of rational functions in $t$. 
Dwork states in \cite{dwork69} that this is a consequence of 'Deligne's theorem'
(without giving a reference) applied to the Legendre case. 
For the case of constant term sequences with symmetric $g(\v x)$ it is proven in 
\cite[Thm 7.15]{BeVlIII}. Consequently this allows us to compute the specialization of $t^\sigma$ at 
some $p$-adic point $t_0$ by specializing the approximating rational functions at $t_0$
and then take the limit. The image is denoted by $t_0^\sigma$. A condition is that $t_0$ is not a
pole modulo $p$ of the approximating rational functions which, in many cases, comes down to the condition
$F_p(t_0)\not\is0\mod{p}$. The second evaluation problem is to compute $F(t_0)/F(t_0^\sigma)$
which, again, we cannot do by direct substitution into the power series. However it was Dwork's
insight that $F(t)/F(t^\sigma)$ can be approximated $p$-adically by the functions
$F_{p^r}(t)/F_{p^{r-1}}(t^\sigma)$, see Theorem \ref{thm:dworkcongruence} for the case of
constant term sequences $F(t)$.
By specializing these rational functions at $t=t_0$ and taking the limit one can compute
$F(t_0)/F(t_0^\sigma)$ in principle. Another insight of Dwork was that the limit
$F(t_0)/F(t_0^\sigma)$ equals
the unit root $\vartheta_p$ of a certain $\zeta$-function if $t_0^\sigma=t_0$. It must be
said that the latter two statements are not contained in \cite{dwork69} in full generality.
We refer to Theorem \ref{thm:dworkcongruence} and the Appendix of \cite{BeVlI} instead.

So we have argued that one may expect supercongruences at arguments $t_0$ that are fixed under
the excellent $p$-Frobenius lift. Preferably one would like to have rational or
algebraic numbers $t_0$ that are fixed for infinitely many primes $p$. In our modular setting
we looked for $\tau_0$ such that $\tau_0$ and $p\tau_0$ are $\Gamma$-equivalent, and hence
$t(p\tau_0)=t(\tau_0)$. But lacking such a background the nature of these fixed points is much less
clear.
We conjecture that the singularities of the differential equation are among
them, as long as they lie in $\Z_p$. The point $1$ is a singularity of the hypergeometric equations
and the (former) conjectures by Rodriguez-Villegas are examples of supercongruences that correspond
to them. In \cite{AESZ10} we find an extensive list of fourth order
equations of Calabi-Yau type, precisely the kind we are considering. It would be interesting to see
if Rodriguez-Villegas's list extends to at least some of them. 
Recently there has been some interest in special points called attractor points.
These are special values of $t$ that are 
related to the existence of black hole solutions of certain supergravity theories in theoretical
physics. Regrettably we lack the expertise to go into this subject more deeply.
What is relevant for us is that there are arithmetic methods to search for such
points. For example, consider the family of Calabi-Yau threefolds given by $1-tg(\v x)=0$ with
$$
g=(1+x_1+x_2+x_3+x_4)(1+1/x_1+1/x_2+1/x_3+1/x_4).
$$
This is called the \emph{Hulek-Verrill} family. The corresponding constant terms read
$$
{\rm hv}_k:=\sum_{m_1+m_2+m_3+m_4+m_5=k}\left(\frac{k!}{m_1!m_2!m_3!m_4!m_5!}\right)^2. 
$$
Candelas-De la Ossa-Elmi-Van Straten \cite{COES19} discovered experimentally that at
$t_0=-1/7$ the local zeta-function of
$1-t_0g(\v x)=0$ factors as $(X^2-a_pX+p^3)(X^2-pb_px+p^3)$ with $a_p,b_p\in\Z$
for many $p\ne7$. Supposedly this identifies $-1/7$ as an attractor point (of rank 2).
Computer experiment suggests the following conjecture.
\begin{conjecture}\label{hulek-verrill-conj}
For all primes $p\ne7$ we have 
$$
\sum_{k=0}^{p-1}{\rm hv}_k(-1/7)^k\is a_p\mod{p^2},
$$
where $a_p$ is coefficient of $q^p$ in the weight 4 cusp form of level
14 starting with $q-2q^2+8q^3+\cdots$
\end{conjecture}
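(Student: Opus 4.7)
The statement sits outside the reach of Theorems \ref{thm:functional3}, \ref{thm:CMpoints3} and \ref{thm:CMpoints3plus} for two reasons: the Hulek--Verrill equation is of order four rather than three, and no modular parametrisation of its mirror map $t(q)$ is known. My plan is therefore to follow the Dwork-theoretic route sketched around Theorem \ref{thm:dworkcongruence}, feeding in the rank-two attractor factorisation of \cite{COES19} as the geometric substitute for modularity. The proof would proceed in three stages.

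\textbf{Step 1 (Power-series supercongruence and Frobenius fixed point).} First I would establish the formal identity
\[
F(t)\ \equiv\ F_p(t)\, F(t^\sigma)\mod{p^2}
\]
in $\Z_p\pow t$, where $F(t)=\sum_{k\ge0}{\rm hv}_k t^k$. Since ${\rm hv}_k$ is a constant-term sequence for the symmetric Laurent polynomial satisfying $g(\v x)=g(1/\v x)$, this fits the general machinery of \cite{BeVlIII}; the same machinery provides the rational-function approximation of $t^\sigma$. Next I would verify that $t_0=-1/7$ is a $p$-adic fixed point of $\sigma$ for every prime $p\ne 7$. The symmetry of $g$ and the specific denominator $7$ should make this check tractable, though precisely why $-1/7$ (and not other small rationals) works is itself part of the conjecture.

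\textbf{Step 2 (Specialisation to the unit root).} Granting Step 1 and the nonvanishing $F_p(-1/7)\not\equiv 0\mod p$ so that all denominators remain invertible, formal specialisation yields
\[
F_p(-1/7)\ \equiv\ \vartheta_p\mod{p^2},
\]
where $\vartheta_p$ is the unique $p$-adic unit root of the local $\zeta$-function of the fibre $1-t_0g(\v x)=0$ at $t_0=-1/7$.

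\textbf{Step 3 (Identification with the cusp form).} Finally I would use the conjectural factorisation $Z_p(X)=(X^2-a_pX+p^3)(X^2-pb_pX+p^3)$ observed in \cite{COES19}. The second factor has only non-unit $p$-adic roots, so $\vartheta_p$ must be the unit root of the first factor, giving $\vartheta_p\equiv a_p\mod{p^3}$ and in particular mod $p^2$. The main obstacle is precisely this step: proving the factorisation unconditionally and identifying its rank-two piece with the weight 4 newform on $\Gamma_0(14)$ requires a Shioda--Inose- or Scholl-style construction that attaches a modular motive to the attractor fibre, producing a Galois representation whose Frobenius trace matches the $a_p$ of the cusp form $q-2q^2+8q^3+\cdots$. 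This motivic identification replaces the modular parametrisation that drove Theorems \ref{thm:CMpoints3} and \ref{thm:CMpoints3plus}, and is the heart of why the statement remains conjectural.
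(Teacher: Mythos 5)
The statement you are asked about is presented in the paper as a \emph{conjecture}, supported only by computer experiment; the paper offers no proof of it, so there is nothing to compare your argument against except the surrounding discussion. Your outline is essentially the strategy that the paper itself sketches informally in the paragraphs preceding the conjecture (Dwork's excellent Frobenius lift, specialisation to the unit root, identification of the unit root via the experimentally observed factorisation of the local zeta-function), and you correctly flag the motivic identification in Step 3 as a genuine obstruction. To that extent your proposal is a fair road map rather than a proof, and you say so.

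However, you underestimate how much of Step 1 is also open, and this matters because you present it as the routine part. First, the functional congruence $F(t)\equiv F_p(t)F(t^\sigma)\bmod{p^2}$ is \emph{not} supplied by the machinery of \cite{BeVlIII} for constant-term sequences: Theorem \ref{thm:dworkcongruence} gives $F(t)/F(t^\sigma)\equiv F_{p^r}(t)/F_{p^{r-1}}(t^\sigma)\bmod{p^r}$, which at the relevant level is only a mod $p$ statement, and the paper explicitly says that \cite{BeVlIII} was an attempt to prove the mod $p^{2r}$ strengthening in at least one case ``but unfortunately we failed.'' For orders $n\ge4$ there is no modular parametrisation to substitute, so this step is exactly as conjectural as the rest. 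Second, your assertion that $t_0=-1/7$ is a $p$-adic fixed point of $\sigma$ for all $p\ne7$ has no support in the paper: the finite singularities of equation \#34 are $t_0=1,1/9,1/25$, so $-1/7$ is not covered by the heuristic that singular points are Frobenius-fixed, and the symmetry of $g$ by itself gives no reason why this particular rational number should satisfy $t_0^\sigma=t_0$. In short, every one of your three steps contains an unproved assertion, not just the third; the honest conclusion is that the statement remains a conjecture, which is precisely how the paper presents it.
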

The differential equation associated to this case is \#34 in the list of \cite{AESZ10}. It
reads $Ly=0$ with
\begin{eqnarray*}
L&=&\theta^4-t(35\theta^4+70\theta^3+63\theta^2+28\theta+5)\\
&&+t^2(\theta+1)^2(259\theta^2+518\theta+285)-225t^3(\theta+1)^2(\theta+2)^2.
\end{eqnarray*}
The finite singularities are $t_0=1,1/9,1/25$. For these arguments there is an analogue 
for Conjecture \ref{hulek-verrill-conj} with the level 6 form
$$
q\prod_{n\ge1}(1-q^n)^2(1-q^{2n})^2(1-q^{3n})^2(1-q^{6n})^2
$$
when $t_0=1,1/9$ and the level 30 form $q-2q^2+3q^3+4q^4+5q^5-6q^6+\cdots$ when $t_0=1/25$.
Another example of an attractor point is $t_0=-1/8$ for the hypergeometric function
${}_4F_3(1/3,1/3,2/3,2/3;1,1,1|t)$. This point was found in 
\cite[Table 1]{BEKK22}. A supercongruence modulo $p^3$ at $t_0=1$ occurs in the list of
Rodriguez-Villegas, but experiment suggests the following.
\begin{conjecture}
For all primes $p\ne3$ we have 
$$
\sum_{k=0}^{p-1}\binom{-1/3}{k}^2\binom{-2/3}{k}^2(-1/8)^k\is a_p\mod{p^3},
$$
where $a_p$ is coefficient of $q^p$ in the weight 4 cusp form of level 54 starting with
$q+2q^2+4q^4+3q^5+\cdots$
\end{conjecture}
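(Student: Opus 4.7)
The plan is to adapt the Dwork-theoretic framework sketched in the introduction to the rank-$4$ hypergeometric motive $F(t)={}_4F_3(1/3,1/3,2/3,2/3;1,1,1|t)$, even though its differential equation has order $4$ and therefore falls outside the reach of Theorems \ref{thm:functional3} and \ref{thm:CMpoints3}. Concretely I would proceed in three stages: (i) verify that $t_0=-1/8$ is a fixed point of the excellent $p$-Frobenius lift $t\mapsto t^\sigma$ for all primes $p\ne 3$; (ii) establish a functional supercongruence $F(t)\is F_p(t)F(t^\sigma)\mod{p^3}$; and (iii) identify the resulting $p$-adic unit root with the Fourier coefficient $a_p$ of the level $54$ weight $4$ newform.

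For step (i), recall from the discussion before Conjecture \ref{hulek-verrill-conj} that $t^\sigma$ is a $p$-adic limit of rational functions in $t$, so it suffices to show $(-1/8)^\sigma=-1/8$ in $\Z_p$. The identification in \cite{BEKK22} of $t_0=-1/8$ as an attractor point of rank $2$ predicts that the local zeta function at this specialisation factors as $(1-a_p T+p^3 T^2)\cdot L_2(T)$; this factorisation is precisely what is needed to guarantee the fixed-point property, since the unit root $\vartheta_p$ of the quadratic factor arises as the limit of $F_{p^{r+1}}(-1/8)/F_{p^r}((-1/8)^\sigma)$ via Dwork's congruences, and the existence of this $\Z_p$-valued limit forces $(-1/8)^\sigma=-1/8$. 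This step is heavily supported numerically.

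Step (ii) is the genuinely hard one. The mod $p^2$ version forces the coefficients of $F$ to have the Lucas property modulo $p$, which is easily verified here. The jump to mod $p^3$ cannot directly invoke the modular machinery of the present paper, because no weight-$2$ form on a genus-$0$ group parametrises $F$ globally in $t$. I would attempt two complementary strategies. The first is a Clausen-type decomposition: seek an identity expressing $F(t)$ in terms of the order-$2$ hypergeometric ${}_2F_1(1/3,2/3;1|\phi(t))$ for some algebraic $\phi$, since Theorem \ref{thm:functional2} then delivers a mod $p^2$ congruence for the ${}_2F_1$ factor that, in favourable circumstances, squares up to a mod $p^3$ congruence for the symmetric-square-like ${}_4F_3$. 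The second is to adapt the $p$-adic $\Gamma$-function methods of Long--Tu--Yui--Zudilin \cite{LoTuYuZu21}, who handled the $t_0=1$ case of the Rodriguez-Villegas list, by locating a Whipple-type transformation that specialises naturally at $t_0=-1/8$ rather than at a singularity of the differential equation.

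Granted steps (i) and (ii), evaluating the functional supercongruence at $t_0=-1/8$ yields $F_p(-1/8)\is\vartheta_p\mod{p^3}$, and the Ramanujan--Deligne bound $|a_p|\le 2p^{3/2}$ for a weight $4$ newform implies $\vartheta_p\is a_p\mod{p^3}$ in the ordinary case $p\nmid a_p$; the non-ordinary primes require a separate argument, but there are only finitely many to worry about once the newform is pinned down. Matching with the specific level $54$ newform then reduces to comparing finitely many Fourier coefficients together with a Faltings--Serre style rigidity argument. The main obstacle is step (ii): producing a mod $p^3$ functional supercongruence for a rank-$4$ equation without an available modular uniformisation is exactly where the methods of this paper stop, and circumventing it requires either a genuine motivic decomposition of the Calabi-Yau threefold attached to $t=-1/8$ (which is essentially what being an attractor point predicts, but which is not known) or a new hypergeometric identity bespoke to this attractor value.
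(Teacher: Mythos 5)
You should first be aware that the paper does not prove this statement at all: it appears in the introduction explicitly as a \emph{conjecture}, supported only by computer experiment, precisely because the underlying differential equation has order $4$ and therefore lies outside the modular machinery (Theorems \ref{thm:functional2}--\ref{thm:CMpoints3plus}) that drives every proof in the paper. Your proposal is a reasonable research programme, and you are candid that step (ii) is open, but as a proof it has genuine gaps at every stage. Step (ii) is not merely ``the hard part''; it is the entire content of the conjecture, and neither of your two strategies is available. There is no Clausen-type identity expressing ${}_4F_3(1/3,1/3,2/3,2/3;1,1,1|t)$ in terms of a second-order hypergeometric function: the associated hypergeometric motive is generically irreducible of rank $4$, and the attractor-point prediction is only that it \emph{splits} $p$-adically/motivically at the single value $t=-1/8$, which gives no global functional congruence in $t$ to specialize. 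The Long--Tu--Yui--Zudilin techniques are tailored to $t_0=1$, a singular point of the equation where the local monodromy degenerates; $-1/8$ is an ordinary point, so their transformation-formula arguments do not transfer. The paper's own authors report in \cite{BeVlIII} that they were unable to prove even closely related mod $p^{2r}$ statements, so invoking that framework does not close the gap.

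Step (i) is also logically inverted. Dwork's congruence (Theorem \ref{thm:dworkcongruence}) asserts convergence of $F_{p^r}(t)/F_{p^{r-1}}(t^\sigma)$ as power series in $t$; to specialize it at $t_0=-1/8$ you must already know $t_0^\sigma$, so the existence of a $\Z_p$-valued limit cannot be used to \emph{deduce} $(-1/8)^\sigma=-1/8$. The paper only conjectures that the \emph{singularities} of the differential equation are fixed points of the excellent Frobenius lift, and explicitly says that for $n\ge4$ the nature of these fixed points is ``much less clear''; $-1/8$ is not a singularity here, so even the conjectural part of the paper does not cover it. Finally, in step (iii), passing from a putative unit root $\vartheta_p$ of the local zeta function to the Fourier coefficient $a_p$ of the specific level $54$ weight $4$ newform is itself a modularity theorem for the quadratic factor of the zeta function of the threefold at $t=-1/8$; the paper records only the experimentally observed analogue of such a factorization (in the Hulek--Verrill case), and a Faltings--Serre argument presupposes that you have already constructed the rank-$2$ Galois subrepresentation, which is exactly what the attractor-point prediction leaves unproven. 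In short: the statement remains a conjecture both in the paper and after your proposal.
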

Note that we seem to have a congruence modulo $p^3$. 

Finally we like to mention a much more general congruence which is a super version
of the following.
\begin{theorem}[Dwork congruence]\label{thm:dworkcongruence}
Suppose $F(t)$ is the generating function of a constant term sequence. Then, for any prime
$p$ and any $r\ge1$ we have
\be\label{mellit-vlasenko}
\frac{F(t)}{F(t^\sigma)}\is\frac{F_{p^r}(t)}{F_{p^{r-1}}(t^\sigma)}\mod{p^r}
\ee
as power series in $t$.
\end{theorem}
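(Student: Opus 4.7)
The plan is to follow Dwork's strategy of recognizing the right-hand side as the $r$-th term of a $p$-adically Cauchy sequence in $\Z_p\pow t$ whose limit is $F(t)/F(t^\sigma)$. Since $g_0=1$, each $F_{p^{r-1}}(t^\sigma)$ is a unit in $\Z_p\pow t$, so $\Phi_r(t):=F_{p^r}(t)/F_{p^{r-1}}(t^\sigma)$ is well defined. It suffices to establish the Cauchy property
$$F_{p^{r+1}}(t)\,F_{p^{r-1}}(t^\sigma)\;\is\;F_{p^r}(t)\,F_{p^r}(t^\sigma)\mod{p^r}$$
(equivalent to $\Phi_{r+1}\is\Phi_r\mod{p^r}$) and then pass to the limit; that the limit equals $F(t)/F(t^\sigma)$ and already lies in $\Z_p\pow t$ will follow from the $r=1$ case, which is the classical Lucas/Dwork mod-$p$ statement recalled in the introduction.

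Reading off the coefficient of $t^N$ and using the multinomial expansion of $g(\v x)^k$, the Cauchy congruence reduces to the formal Dwork congruence for the constant term sequence $\{g_k\}$: for all $k,\ell\ge 0$ and $s\ge 1$ with $0\le k<p^s$,
\[
g_{k+\ell p^s}\;\is\;g_k\,\widetilde g_\ell^{(s)}\mod{p^s},
\]
where $\widetilde g_\ell^{(s)}$ is a Frobenius-twisted constant term whose generating series in $t$ matches $F(t^\sigma)$ to the required precision. To prove it I would write $g(\v x)^{k+\ell p^s}=g(\v x)^k\,(g(\v x)^{p^s})^\ell$ and expand $g(\v x)^{p^s}$ $p$-adically against an explicit Frobenius lift of $g(\v x)$; taking constant terms and using $p$-adic convergence of the formal Laurent series then gives the claim. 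This step is where the \emph{excellence} of $t^\sigma$ is essential: the choice $t^\sigma(q)=t(q^p)$ is precisely the mirror-map datum compatible with the geometric Frobenius, and only for it is the error genuinely $O(p^s)$ rather than $O(p)$. The main congruence then follows by induction on $r$, applying the formal congruence with $s=r$ to handle $N<p^{r+1}$ and peeling off leading base-$p$ digits to reapply it with smaller $s$ for larger $N$.

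The main obstacle is the formal Dwork congruence for general constant term sequences: the elementary manipulations of $\binom{pn}{pk}\is\binom{n}{k}\mod{p^3}$ type that suffice in the hypergeometric case are no longer available, and one needs the $p$-adic Laurent-series arguments of Mellit--Vlasenko, cleaned up for symmetric $g$ in \cite{BeVlIII}. A secondary difficulty is justifying the substitution of $t^\sigma$ into polynomials of degree growing with $r$ without loss of precision; this is handled by noting $t^\sigma\is t^p\mod p$ in $\Z_p\pow t$ and tracking the resulting error through the Taylor expansions of $F_{p^j}$ around $t^p$.
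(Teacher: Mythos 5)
Your overall architecture --- setting $\Phi_r=F_{p^r}(t)/F_{p^{r-1}}(t^\sigma)$, proving the Cauchy property $\Phi_{r+1}\is\Phi_r\mod{p^r}$, and identifying the limit with $F(t)/F(t^\sigma)$ by $t$-adic stabilization of coefficients --- is exactly the Mellit--Vlasenko/Beukers--Vlasenko route, which is all the paper itself offers: it gives no proof, only the remark that one occurs after \cite[Thm 3.2]{BeVlII} for the lift $t\mapsto t^p$ and ``can easily be adapted to the case of any Frobenius lift.'' So at the level of strategy you are aligned with the source the paper defers to, and your deferral of the hard formal congruence to \cite{MeVl16}/\cite{BeVlIII} is no worse than what the paper does.

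There is, however, a genuine conceptual error in your justification of the key step. You assert that the excellence of $t^\sigma$ is ``essential'' and that ``only for it is the error genuinely $O(p^s)$ rather than $O(p)$.'' This is false, and the paper says so explicitly: the congruence \eqref{mellit-vlasenko} holds for \emph{any} Frobenius lift with $t^\sigma\is t^p\mod{p}$, and the proof cited in \cite{BeVlII} is written for the naive lift $t\mapsto t^p$. Excellence of the lift is only relevant for the conjectural strengthening to modulus $p^{2r}$ discussed immediately after the theorem. Since you invoke this false principle precisely at the point where the formal Dwork congruence is supposed to be established, the heart of your argument is not actually supported. A second, related gap: your reduction of the Cauchy property to a clean coefficient congruence $g_{k+\ell p^s}\is g_k\,\widetilde g_\ell^{(s)}\mod{p^s}$ by ``reading off the coefficient of $t^N$'' only works when $t^\sigma=t^p$; for the excellent lift $(t^\sigma)^\ell$ is a genuine power series $t^{\ell p}(1+O(t))$, so coefficient extraction mixes infinitely many terms and the passage from the power-series statement to your indexed congruence needs the Cartier-operator (or at least a lift-comparison) argument of \cite{BeVlII}, not the informal expansion of $g(\v x)^{p^s}$ you describe. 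The remedy is to prove the congruence first for $t\mapsto t^p$ and then transport it to $t^\sigma$ using $t^\sigma\is t^p\mod{p}$, which is the adaptation the paper alludes to.
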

A proof occurs after \cite[Thm 3.2]{BeVlII}. However, this proof uses the Frobenius
lift $t\to t^p$ instead of $t\to t^\sigma$.
Fortunately one can easily adapt it to the case of any Frobenius lift, including the excellent
Frobenius lift. A precursor of \cite[Thm 3.2]{BeVlII} is in \cite{MeVl16}. 

Surprisingly, it seems that whenever \eqref{functional-supercongruence} holds, we also have
the modulo $p^{2r}$-congruence
$$
\frac{F(t)}{F(t^\sigma)}\is\frac{F_{p^r}(t)}{F_{p^{r-1}}(t^\sigma)}\mod{p^{2r}}
$$
for all $r\ge1$. The paper \cite{BeVlIII} was an attempt to prove this in at least one
case, but unfortunately we failed. Among the few known proven examples is an indication of an approach
given in the Guo-Zudilin paper \cite{GuoZud21} and the references therein. Remarkably
enough one gets Van Hamme type results modulo $p^{3r}$ via an approach based on so-called
$q$-supercongruences.
\medskip

{\bf Acknowledgement}: Many thanks to the referees whose valuable comments have helped significantly
to improve the manuscript.

\section{Central binomial coefficients, an example when $n=1$}
Before plunging into proofs announced in the introduction we like to point
out a very simple instance of our program when $n=1$. Consider the series
$$
F(t)=\sum_{k\ge0}\binom{2k}{k}t^k=\frac{1}{\sqrt{1-4t}}.
$$
Clearly it satisfies the differential equation $(1-4t)\theta y=2ty$, but we like
to consider the extended equation
$$
\theta((1-4t)\theta-2t)y=0.
$$
So, in a way we are back again in the case $n=2$, albeit a bit degenerate.
A basis of solutions of this differential equation is given by
$$
\frac{1}{\sqrt{1-4t}},\quad \frac{1}{\sqrt{1-4t}}\times
\log\left(\frac{2t}{1-2t+\sqrt{1-4t}}\right).
$$
The $q$-coordinate is simply
$$
q=\frac{2t}{1-2t+\sqrt{1-4t}}
$$
and the mirror map
$$
t=\frac{q}{(1+q)^2}.
$$
Hence $t^\sigma=\frac{q^p}{(1+q^p)^2}$, from which we see that $1/t^\sigma$ is
a polynomial of degree $p$ in $1/t$.
Furthermore, $F(t)=\frac{1+q}{1-q}$ and hence, for any odd prime $p$,
$$
\frac{F(t)}{F(t^\sigma)}=\frac{(1+q)(1-q^p)}{(1-q)(1+q^p)}. 
$$
Let $pG(t)=(1+q)^p-1-q^p$ and notice that by expansion in a $p$-adic geometrical series
modulo $p^2$,
$$
\frac{1}{1+q^p}=\frac{1}{(1+q)^p-pG(t)}\is \frac{1}{(1+q)^p}+\frac{pG(t)}{(1+q)^{2p}}\mod{p^2}.
$$
Hence
$$
\frac{(1+q)(1-q^p)}{(1-q)(1+q^p)}\is\frac{1-q^p}{1-q}\left(\frac{1}{(1+q)^{p-1}}
+\frac{pG(t)}{(1+q)^{2p-1}}\right)\mod{p^2}.
$$
One easily verifies that the rational function on the right is invariant under $q\mapsto 1/q$
and that the only pole is in $-1$ with pole order $<2p$. Hence the right hand side is a 
polynomial $A_p(t)$ in $t$ of degree $<p$. Hence $F(t)/F(t^\sigma)\is A_p(t)\mod{p^2}$.
Since $F(t^\sigma)$ has the form $1+O(t^p)$ we conclude that $A_p(t)\is F_p(t)\mod{p^2}$. 
So congruence \eqref{functional-supercongruence} holds in this case and in particular we
see that
$$
F_p(t)\is \frac{(1+q)(1-q^p)}{(1-q)(1+q^p)}\mod{p^2}\quad \text{with }t=\frac{q}{(1+q)^2}.
$$
This is a rational function version of a result by Z.W.Sun, \cite{ZW-Sun10} with $a=1$
and $m=q+2+1/q$. See also \cite{ZW-Sun21}.

We can specialize the identity at will. Some examples,

\begin{itemize}
\item[--] $q=e^{2\pi i/3}$. Then $t=1$ and we find
$$
\sum_{k=0}^{p-1}\binom{2k}{k}\is\epsilon\mod{p^2}
\quad \text{where}\ \epsilon=\pm1\ \text{and}\ p\is\epsilon\mod{3}.
$$
See \cite[(1.9)]{SunTau11}
\item[--] $q=i$. Then $t=1/2$ and we find
$$
\sum_{k=0}^{p-1}\binom{2k}{k}\frac{1}{2^k}\is\epsilon\mod{p^2}
\quad \text{where}\ \epsilon=\pm1\ \text{and}\ p\is\epsilon\mod{4}
$$
\item[--] $q=e^{\pi i/3}$. Then $t=1/3$ and we find
$$
\sum_{k=0}^{p-1}\binom{2k}{k}\frac{1}{3^k}\is\epsilon\mod{p^2}
\quad \text{where}\ \epsilon=\pm1\ \text{and}\ p\is\epsilon\mod{6}
$$
\item[--] $q=1$. Then $t=1/4$ and we find
$$ 
\sum_{k=0}^{p-1}\binom{2k}{k}\frac{1}{4^k}\is p\mod{p^2}
$$
\item[--] $q=e^{4\pi i/5}$. Then $t=\frac{3+\sqrt{5}}{2}$ and we find
$$ 
\sum_{k=0}^{p-1}\binom{2k}{k}\left(\frac{3+\sqrt{5}}{2}\right)^k\is
\begin{cases}\epsilon\mod{p^2} & \text{when}\ p\is\epsilon\mod{5}\\
\epsilon(2-\sqrt{5})\mod{p^2} & \text{when}\ p\is2\epsilon\mod{5}
\end{cases}
$$ 
\end{itemize}
In the literature we find an abundance of supercongruences involving many 
different truncations of the series $F(t)$. Two more or less random references
are \cite{MaoTau20} and \cite{SunTau11}.

\section{Modular preparations}\label{sec:modular-preparation}
In the two sections following this one we deal with differential equations of order $n=2,3$.
These are the cases where almost always the mirror map $t(q)$ is a modular function.
We remind the reader that we often abuse our notation by using the notations $t(q)$ and
$t(\tau)$ interchangebly, where $\tau$ is the parameter of the complex upper half plane. 
By the latter we actually mean $t(e^{2\pi i\tau})$, but
we trust that this context dependent notation will not present too many difficulties.
We start with a few simple lemmas

\begin{lemma}\label{lemma:elementary}
Let $h(q)\in\Z\pow q$. Then, for any prime $p$ we have
$$
\prod_{k=0}^{p-1}(1+h(q^{1/p}\zeta_p^k)x)\is 1+h(q)x^p\mod{p},
$$
where $\zeta_p=e^{2\pi i/p}$. In particular, when $0<s<p$, the $s$-th elementary function in
$h(q^{1/p}\zeta^k),k=0,1,\ldots,p-1$ is a power series in $\Z\pow q$ whose coefficients are divisible by $p$.
The $p$-th elementary symmetric function equals $h(q)$ modulo $p$. 
\end{lemma}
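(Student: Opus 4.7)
Write $h(q)=\sum_{n\ge 0}a_n q^n$, $H_k:=h(q^{1/p}\zeta_p^k)$, and expand $\prod_{k=0}^{p-1}(1+H_k x)=\sum_{s=0}^{p}e_s\,x^s$, where $e_s=e_s(H_0,\dots,H_{p-1})$ is the $s$-th elementary symmetric polynomial. The plan is to establish (i) $e_s\in\Z\pow q$ for every $s$, (ii) $p\mid e_s$ in $\Z\pow q$ for $1\le s\le p-1$, and (iii) $e_p\equiv h(q)\pmod p$. The ``in particular'' statements of the lemma then follow by reading off the coefficient of $x^s$ in the asserted congruence.

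For (i), the substitution $\phi\colon q^{1/p}\mapsto q^{1/p}\zeta_p$ defines an automorphism of $\Z[\zeta_p]\pow{q^{1/p}}$ satisfying $\phi(H_k)=H_{k+1}$ (indices mod $p$), so $\phi$ fixes each $e_s$; the Galois action $\zeta_p\mapsto\zeta_p^a$ on $\Z[\zeta_p]$ likewise permutes the $H_k$ among themselves. Hence $e_s\in\Z\pow q$. Next, writing $h(q)^s=\sum_N b^{(s)}_N q^N$ gives $h(q^{1/p}\zeta_p^k)^s=\sum_N b^{(s)}_N q^{N/p}\zeta_p^{Nk}$, and summing over $k$ while using that $\sum_{k=0}^{p-1}\zeta_p^{Nk}$ equals $p$ if $p\mid N$ and $0$ otherwise produces
\[
p_s:=\sum_{k=0}^{p-1}H_k^s \;=\; p\sum_{m\ge 0}b^{(s)}_{pm}\,q^m,
\]
so $p\mid p_s$ for every $s\ge 1$. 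Newton's identity $s\,e_s=\sum_{i=1}^{s}(-1)^{i-1}e_{s-i}\,p_i$ combined with induction on $s<p$ then gives $p\mid s\,e_s$, and (ii) follows since $\gcd(s,p)=1$.

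For (iii), Newton at $s=p$ yields $p\,e_p=(-1)^{p-1}p_p+\sum_{i=1}^{p-1}(-1)^{i-1}e_{p-i}\,p_i$, and the latter sum is divisible by $p^2$ by (ii) together with $p\mid p_i$. It remains to refine $p_p\equiv p\,h(q)\pmod{p^2}$, equivalently $b^{(p)}_{pm}\equiv a_m\pmod p$. This is where Fermat's little theorem enters: $h(q)^p\equiv h(q^p)\pmod p$ in $\Z\pow q$, and extracting the coefficient of $q^{pm}$ gives the claim. Combining these yields $p\,e_p\equiv(-1)^{p-1}p\,h(q)\pmod{p^2}$, hence $e_p\equiv h(q)\pmod p$ (the sign $(-1)^{p-1}$ is $+1$ for odd $p$ and still coincides with $+1$ modulo $2$ when $p=2$). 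No single step is deep; the proof amounts to a careful bookkeeping of $p\mid p_s$, Newton's formula, and Fermat's congruence.
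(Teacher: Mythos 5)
Your proof is correct, but it takes a genuinely different route from the paper. The paper's argument is a three-line reduction modulo the prime $(\zeta_p-1)$ of $\Z[\zeta_p]$: modulo that ideal every $\zeta_p^k$ collapses to $1$, the product becomes $(1+h(q^{1/p})x)^p$, the freshman's dream turns this into $1+h(q^{1/p})^px^p\equiv 1+h(q)x^p$, and one then descends from a congruence modulo $\zeta_p-1$ to one modulo $p$ because both sides have coefficients in $\Z$ and $(\zeta_p-1)\cap\Z=p\Z$. You instead keep everything integral from the start: the same substitution/Galois invariance gives $e_s\in\Z\pow q$, the character sum $\sum_k\zeta_p^{Nk}$ shows each power sum $p_s$ is exactly $p$ times an element of $\Z\pow q$, Newton's identities transfer this to $p\mid e_s$ for $0<s<p$ (using $\gcd(s,p)=1$), and a modulo-$p^2$ refinement of $p_p$ via Fermat's congruence $h(q)^p\equiv h(q^p)\pmod p$ recovers $e_p\equiv h(q)\pmod p$, with the sign $(-1)^{p-1}$ correctly dispatched including at $p=2$. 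Your version is longer and needs the slightly delicate $\bmod\ p^2$ step at the top degree, but it buys more explicit information (the exact value $p_s=p\sum_m b^{(s)}_{pm}q^m$ of the power sums) and avoids any appeal to the splitting behaviour of $p$ in $\Q(\zeta_p)$ --- which is just as well, since the paper's closing remark that ``$p$ is unramified in $\Q(\zeta_p)$'' is a slip ($p$ is totally ramified there); the correct justification, which the paper's first sentence already contains and which your argument sidesteps entirely, is simply that $(\zeta_p-1)\cap\Z=p\Z$.
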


\begin{proof}
We prove our equality modulo $\zeta_p-1$. Since the coefficients of both sides of the equality are in $\Z$
the congruence modulo $p$ follows immediately. We see that the product equals
$$
\prod_{k=0}^{p-1}(1+h(q^{1/p}\zeta_p^k)x)\is \prod_{k=0}^{p-1}(1+h(q^{1/p})x)\is (1+h(q^{1/p})x)^p
\is 1+h(q)x^p\mod{\zeta_p-1}. 
$$
Since $p$ is unramified in $\Q(\zeta_p)$ this congruence also holds modulo $p$.
\end{proof}

\begin{lemma}\label{lemma:etaproduct}
Let $m$ be a positive integer and $p$ a prime that does not divide $m$.
Define $\eta_m(\tau):=\eta(m\tau)$ where $\eta(\tau)=
q^{1/24}\prod_{n\ge1}(1-q^n)$.
Then
$$
\eta_m(p\tau)\prod_{k=0}^{p-1}\eta_m((\tau+k)/p)=\exp(2\pi im(p-1)/48)\times\eta_m(\tau)^{p+1}.
$$
\end{lemma}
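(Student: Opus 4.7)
The plan is to verify this identity directly from the product expansion
$$\eta(\tau)=e^{\pi i\tau/12}\prod_{n\ge1}(1-e^{2\pi in\tau})$$
and to reduce first to the case $m=1$. Writing $\tau'=m\tau$, we have $\eta_m(p\tau)=\eta(p\tau')$ and $\eta_m((\tau+k)/p)=\eta((\tau'+mk)/p)$. Since $\gcd(m,p)=1$, writing $mk=pa_k+r_k$ with $r_k\in\{0,\ldots,p-1\}$ shows that the $r_k$ form a permutation of $\{0,\ldots,p-1\}$. Using the shift $\eta(z+1)=e^{\pi i/12}\eta(z)$ we obtain
$$\prod_{k=0}^{p-1}\eta((\tau'+mk)/p)=\exp\Bigl(\frac{\pi i}{12}\sum_{k=0}^{p-1}a_k\Bigr)\prod_{r=0}^{p-1}\eta((\tau'+r)/p),$$
and a short calculation (using $\sum mk=mp(p-1)/2$ and $\sum r_k=p(p-1)/2$) gives $\sum a_k=(m-1)(p-1)/2$. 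This reduces the general claim to the case $m=1$, up to an overall factor $\exp(\pi i(m-1)(p-1)/24)$.

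For the case $m=1$, I would expand both sides termwise. The exponential prefactors combine to $e^{\pi i\tau/12}\cdot e^{\pi i(p-1)/24}$, since $\sum_{k=0}^{p-1}(\tau+k)=p\tau+p(p-1)/2$. For each fixed $n\ge1$ the inner product $\prod_{k=0}^{p-1}(1-e^{2\pi in\tau/p}\zeta_p^{nk})$ equals $1-e^{2\pi in\tau}$ when $p\nmid n$ (via the factorization $X^p-\alpha^p=\prod_k(X-\alpha\zeta_p^k)$), and equals $(1-e^{2\pi im'\tau})^p$ when $n=pm'$. Combined with the identity
$$\prod_{p\nmid n}(1-q^n)=\frac{\prod_{n\ge1}(1-q^n)}{\prod_{m'\ge1}(1-q^{pm'})},$$
the denominator cancels against the corresponding factor in $\eta(p\tau)=e^{\pi ip\tau/12}\prod_{m'\ge1}(1-q^{pm'})$, leaving the clean identity $\eta(p\tau)\prod_{k=0}^{p-1}\eta((\tau+k)/p)=e^{\pi i(p-1)/24}\eta(\tau)^{p+1}$.

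Putting the two steps together yields a total scalar $e^{\pi i(m-1)(p-1)/24}\cdot e^{\pi i(p-1)/24}=e^{\pi im(p-1)/24}=\exp(2\pi im(p-1)/48)$, which matches the assertion. The main obstacle is not conceptual but bookkeeping: three distinct sources of roots-of-unity factors must be tracked in tandem, namely the $q^{1/24}$ normalizations in each $\eta$-factor, the integer shifts $a_k$ coming from reducing $mk$ modulo $p$, and the case split of the infinite product over $n$ modulo $p$. It is easy to drop or double-count a factor of $e^{\pi i/12}$, so careful accounting is the key point.
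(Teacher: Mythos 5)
Your proof is correct, and the core computation for $m=1$ --- splitting off the prefactors coming from $q^{1/24}$ and evaluating $\prod_{k=0}^{p-1}(1-\zeta_p^{nk}q^{n/p})$ according to whether $p\mid n$ --- is exactly the paper's argument. The only difference is cosmetic: the paper handles general $m$ by asserting the same direct computation goes through with $q^{m/p}$ in place of $q^{1/p}$, whereas you reduce to $m=1$ by permuting the residues $mk\bmod p$ and tracking the integer shifts $a_k$ via $\eta(z+1)=e^{\pi i/12}\eta(z)$; both routes produce the factor $\exp(2\pi i m(p-1)/48)$.
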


\begin{remark}
A fortiori this lemma holds for any product of $\eta_m$
with different $m$, all of them not divisible by $p$. We call a product of the
form $\prod_{m|N}\eta_m^{e_m}$ with $e_m\in\Z$ an \emph{$\eta$-product}.
\end{remark}

\begin{proof}
We prove our statement for $m=1$, the case $m>1$ being completely similar.
Observe that 
$$\exp(2\pi ip\tau/24)\prod_{k=0}^{p-1}\exp(2\pi i(\tau+k)/(24p))=
\exp(2\pi i(p-1)/48)\exp(2\pi i(p+1)\tau/24),$$
which accounts for the factors coming from $q^{1/24}$. Next, for any $n\ge1$,
$$\prod_{k=0}^{p-1}(1-\zeta_p^{nk}q^{n/p})=\begin{cases}1-q^n &\text{if $p$ does not divide $n$}
\\(1-q^{1/p})^p &\text{if $p$ divides $n$} \end{cases}
$$
Hence
$$
\prod_{n\ge1}\prod_{k=0}^{p-1}(1-\zeta^{nk}q^{n/p})=\prod_{n\ge1}(1-q^n)\times
\prod_{n\is0\mod{p}}\frac{(1-q^{n/p})^p}{1-q^n}.
$$
Replace $n$ by $np$ in the second product to get
$$
\prod_{n\ge1}(1-q^n)\times \prod_{n\is0\mod{p}}\frac{(1-q^{n/p})^p}{1-q^n}
=\prod_{n\ge1}\frac{(1-q^n)^{p+1}}{1-q^{np}}.
$$
This finishes the proof of our lemma. 
\end{proof}

\begin{lemma}\label{lemma:cuspidalzero}
Consider an $\eta$-product $E(\tau)=\prod_{m|N}\eta_m(\tau)^{e_m}$.
Let $r,s$ be relatively prime integers. Then $r/s$ is a zero of $E(\tau)$
if and only if
$$
\sum_{m|N}e_m\frac{\gcd(s,m)^2}{m}>0.
$$
\end{lemma}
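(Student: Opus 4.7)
The plan is to compute the order of vanishing of $E$ at the cusp $r/s$ by expanding each factor $\eta(m\tau)^{e_m}$ via Dedekind's transformation law and summing the contributions.

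First, fix a matrix $\gamma=\begin{pmatrix} r & b\\ s & d\end{pmatrix}\in SL_2(\Z)$ sending $i\infty$ to $r/s$. The order of $E$ at the cusp $r/s$ is read off from the leading power of $q=e^{2\pi i\tau}$ in the expansion of $E(\gamma\tau)$ as $\tau\to i\infty$. For each $m\mid N$, set $g=\gcd(s,m)$ and $m_1=m/g$. Using $\gcd(r,s)=1$, one finds $A_m\in SL_2(\Z)$ and an integer $\beta_m$ with
$$m\gamma = A_m\begin{pmatrix} g & \beta_m\\ 0 & m_1\end{pmatrix}.$$
The upper-left entry is forced to equal $g$: it must divide both $s$ (from the lower-left entry of $m\gamma$) and $mr$ (from the upper-left entry), whose gcd is $\gcd(s,m)=g$ since $\gcd(r,s)=1$, and the Hermite normal form maximizes it.

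Next, apply Dedekind's law $\eta(Az)=\varepsilon(A)(c_A z+d_A)^{1/2}\eta(z)$ with $A=A_m$ and $z=(g\tau+\beta_m)/m_1$. The square-root factor grows at worst like $\tau^{1/2}$ as $\tau\to i\infty$ and is nonvanishing, hence contributes $0$ to the $q$-order. The order of $\eta(m\gamma\tau)$ at $i\infty$ therefore comes entirely from
$$\eta\!\left(\frac{g\tau+\beta_m}{m_1}\right) = e^{2\pi i(g\tau+\beta_m)/(24m_1)}\prod_{n\ge1}\bigl(1-e^{2\pi in(g\tau+\beta_m)/m_1}\bigr),$$
whose leading behavior in $q$ is $q^{g/(24m_1)}=q^{g^2/(24m)}$ times a unit power series.

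Summing the contributions over $m\mid N$, the leading exponent of $q$ in $E(\gamma\tau)$ equals
$$\frac{1}{24}\sum_{m\mid N} e_m\,\frac{\gcd(s,m)^2}{m},$$
so $E$ vanishes at the cusp $r/s$ precisely when this quantity is strictly positive, as claimed. The main obstacle is establishing the Hermite decomposition $m\gamma=A_m\sigma_m$ with top-left entry exactly $\gcd(s,m)$; once that is in place, the remainder is a direct application of Dedekind's law and the defining $q$-product of $\eta$.
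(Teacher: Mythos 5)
Your argument is correct and arrives at the same key quantity as the paper, namely that the contribution of $\eta(m\tau)$ at the cusp $r/s$ is governed by $\gcd(s,m)^2/(24m)$, but it gets there by a different decomposition. The paper substitutes $\tau=r/s+i\epsilon$ directly, chooses for each $m$ an explicit matrix with entries $u,v$ satisfying $m'rv-us'=1$ (where $m'=m/\gcd(s,m)$, $s'=s/\gcd(s,m)$), and extracts the asymptotic $\eta(mr/s+im\epsilon)\sim\epsilon^{-1/2}\exp\bigl(-\gcd(s,m)^2/(24ms^2\epsilon)\bigr)$ as $\epsilon\downarrow0$, then sums the exponents. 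You instead conjugate by one fixed scaling matrix $\gamma\in SL_2(\Z)$ with $\gamma(i\infty)=r/s$ and put $\mathrm{diag}(m,1)\gamma$ into Hermite normal form, reading off the $q$-order at $i\infty$; your identification of the upper-left entry as $\gcd(mr,s)=\gcd(m,s)$ is correct (it is exactly the gcd of the first column, since the first column of an $SL_2(\Z)$ matrix is primitive — slightly sharper than ``the Hermite form maximizes it''). This is the standard derivation of Ligozat's order formula: more systematic and uniform in $m$, at the cost of invoking the normal form, whereas the paper's version is a bare-hands asymptotic computation. One caveat shared by both proofs: in the borderline case $\sum_m e_m\gcd(s,m)^2/m=0$ the discarded polynomial factors ($\epsilon^{-\sum_m e_m/2}$ in the paper, your $(c_{A_m}z+d_{A_m})^{1/2}$ factors) decide whether the limit vanishes, so the stated ``if and only if'' is really a statement about the sign of the cusp order; this imprecision originates in the lemma itself and is harmless in its applications.
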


\begin{proof}
Substitute $\tau=r/s+i\epsilon$ in $E(\tau)$ and let $\epsilon\downarrow0$. 
For each factor we have $\eta_m(\tau)=\eta(mr/s+im\epsilon)$. Let $m'=m/\gcd(s,m)$
and $s'=s/\gcd(s,m)$. Choose $u,v\in\Z$ such that $m'rv-us'=1$. Then by modularity 
of $\eta$ we get
$$
\eta\left(\frac{v\tau-u}{-s'\tau+m'r}\right)\sim(-s'\tau+m'r)^{1/2}\eta(\tau),
$$
where $\sim$ denotes equality up to a non-zero factor independent of $\epsilon$.
Set $\tau=mr/s+im\epsilon=m'r/s'+mi\epsilon$. Then after some rearranging,
$$
\eta(mr/s+im\epsilon)\sim\epsilon^{-1/2}\eta\left(\frac{i}{m(s')^2\epsilon}-\frac{v}{s'}\right)
\sim \epsilon^{-1/2}\exp\left(-\frac{\gcd(s,m)^2}{24ms^2\epsilon}\right).
$$
Hence $E(\tau)$ behaves like 
$$
E(r/s+i\epsilon)\sim\epsilon^{-\sum_me_m/2}\exp\left(-\frac{1}{24s^2\epsilon}\sum_{m\in N}e_m\frac{\gcd(s,m)^2}{m}\right)
$$
and our lemma follows. 
\end{proof}

The following lemma is a consequence of the standard theory of modular forms.

\begin{lemma}\label{lemma:cusplimit}
Let $e|N$ be such that $\gcd(e,N/e)=1$ or $2$. Then any point $r/s$ with $\gcd(r,s)=1,r\ne0,s>0$
and $\gcd(s,N)=e$ is $\Gamma_0(N)$-equivalent to $1/e$. 

Let $f$ be a modular form of weight $1$ with respect to $\Gamma_0(N)$ and character $\chi$.
Suppose that $f$ has no zero in $1/e$, i.e. $\lim_{\epsilon\downarrow0}\epsilon f(1/e+i\epsilon)=c_e\ne0$.
Let $d\in\Z$ be such that $d\is r^{-1}\mod{e}$ and $d\is s/e\mod{N/e}$.
Then $\lim_{\epsilon\downarrow0}\epsilon f(r/s+i\epsilon)=\chi(d)ec_e/s$ for any $r/s$ as above.
\end{lemma}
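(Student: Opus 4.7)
The plan is to construct an explicit $\gamma \in \Gamma_0(N)$ sending $1/e$ to $r/s$, then apply the weight-one transformation law and take the limit at the cusp.

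Writing $\gamma = \begin{pmatrix} a & b \\ c & d \end{pmatrix}$, we have $\gamma(1/e) = (a + be)/(c + de)$, and the identity $d(a + be) - b(c + de) = \det \gamma = 1$ forces $\gcd(a + be, c + de) = 1$. Since $\gcd(r, s) = 1$ and $s > 0$, we may therefore normalize $a + be = r$, $c + de = s$. The determinant condition becomes $rd - bs = 1$, hence $d \equiv r^{-1} \pmod s$ and a fortiori modulo $e$; the condition $N \mid c = s - de$ becomes $d \equiv s/e \pmod{N/e}$ (using $e \mid s$). Under $\gcd(e, N/e) \in \{1, 2\}$ these two congruences for $d$ are compatible: when $\gcd = 2$, both $e$ and $N/e$ are even, so $s$ is even, $r$ is odd, and both $r^{-1}$ and $s/e$ are odd, matching modulo $2$. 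Hence such a $d$ exists, and setting $c = s - de$, $b = (rd - 1)/s$, $a = r - be$ produces the required $\gamma \in \Gamma_0(N)$. This simultaneously proves the first assertion of the lemma.

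For the cusp limit, set $\tau = 1/e + i\epsilon'$ and $\gamma \tau = r/s + i\epsilon$. Since $\gamma'(\tau) = (c\tau + d)^{-2}$ and $c/e + d = s/e$, we have $\gamma'(1/e) = e^2/s^2$, so $\epsilon = (e^2/s^2)\epsilon' + O({\epsilon'}^2)$. The weight-one transformation $f(\gamma\tau) = \chi(d)(c\tau + d) f(\tau)$ with $c\tau + d \to s/e$, multiplied by $\epsilon$ and passed to the limit, yields
\[
\lim_{\epsilon \downarrow 0} \epsilon f(r/s + i\epsilon) = \chi(d) \cdot \frac{s}{e} \cdot \frac{e^2}{s^2} \cdot c_e = \chi(d) \frac{e c_e}{s}.
\]

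The one subtle point is that the matrix construction uses a specific integer $d$, and $\chi(d)$ must be shown to depend only on $d$ modulo $N$ as determined by the two stated congruences. When $\gcd(e, N/e) = 1$ the congruences determine $d$ mod $N$ uniquely, so this is automatic. When $\gcd(e, N/e) = 2$ there is a twofold lift ambiguity; one verifies that the generator of the stabilizer of the cusp $1/e$ in $\Gamma_0(N)$ has lower-right entry $\equiv 1 + N/2 \pmod N$, and the hypothesis $c_e \neq 0$ forces $\chi$ to be trivial on this stabilizer (by applying the transformation law to the stabilizer element at $\tau = 1/e + i\epsilon$ and dividing out $f$), so both lifts give the same value of $\chi$. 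This bookkeeping in the boundary case is the main technical hurdle.
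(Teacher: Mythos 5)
Your proof is correct and follows essentially the same route as the paper: the same explicit matrix with $a+be=r$, $c+de=s$, the same compatibility check of the two congruences for $d$ via $\gcd(e,N/e)\in\{1,2\}$, and the same weight-one transformation-plus-rescaling of $\epsilon$ at the cusp. Your closing observation that $\chi(d)$ is well defined from the weaker congruences in the statement (via $\chi(1+N/2)=1$, forced by $c_e\neq0$ and the parabolic stabilizer of $1/e$) is a legitimate extra point that the paper's proof passes over silently.
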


\begin{proof}
To prove the first statement choose $a,b,c,d\in\Z$ such that
$$
d\is r^{-1}\mod{s},\quad d\is s/e\mod{N/e},\quad b=(rd-1)/s,\quad c=s-de,\quad 
a=r-be.
$$
Note that $\gcd(s,N/e)=\gcd(e,N/e)$. If this gcd is $1$ then the congruence equations
for $d$ can be satisfied. If the gcd is $2$ then $s$ and $N/e$ are both even. So
$r,s/e$ are both odd and the congruence equations can also be satisfied in that case. 
One easily verifies that
$$
A:=\begin{pmatrix}a & b\\ c & d\end{pmatrix}\in\Gamma_0(N)\quad
\text{and}\quad a+be=r,c+de=s.  
$$  
Set $\tau=1/e+i\epsilon$ in $f(A\tau)=\chi(d)(c\tau+d)f(\tau)$ to obtain
$$
f\left(\frac{r}{s}+i\epsilon\frac{1}{(c/e+d)^2}+O(\epsilon^2)\right)=\chi(d)(c/e+d+ci\epsilon)
f(1/e+i\epsilon).
$$
Multiply by $\epsilon$ and take the limit as $\epsilon\downarrow0$. We find
$$
\lim_{\epsilon\downarrow0}\epsilon f\left(\frac{r}{s}+i\epsilon\frac{e^2}{s^2}\right)
=\chi(d)\frac{s}{e}c_e.
$$
Replace $\epsilon$ by $\epsilon s^2/e^2$ to find the second assertion of our lemma. 
\end{proof}

Finally we introduce the \emph{Atkin-Lehner extensions} $\Gamma$ of $\Gamma_0(N)$ and
classify the orbits of $\Theta(p\tau),t(p\tau)$ under the action of $\Gamma$ on $\tau$.

Let $N$ be a positive integer and consider $\Gamma_0(N)$.
Next, let  $e$  be a \textit{Hall divisor} of $ N $; that is, $ e | N $ and $\gcd(e, N/e) = 1.$ We define the set of \textit{Atkin--Lehner involutions} $ W_e $ by matrices of the form
\begin{equation}\label{AL}
w_e:=\begin{pmatrix}
a_e & b_e \\
c_e N & d_e 
\end{pmatrix} \in \mathrm{GL}_2^{+}(\mathbb{Z}),
\end{equation}
where $ a_e, b_e, c_e, d_e $ are integers so that  $e\mid a_{e}$, $e\mid d_{e}$, and 
$ \det(w_e) = e $. In particular $W_1=\Gamma_0(N)$. 
One easily verifies that
$w_e$ normalizes $\Gamma_0(N)$ and the product of any two elements from $W_e$ is 
$e$ times an element in $\Gamma_0(N)$.
Consequently the union $\Gamma_0(N)\cup W_e$ forms a subgroup of
$\mathrm{PGL}_2(\Q)^+$.
 
When $ e = N $, we call the matrix
$$
w_N =
\begin{pmatrix}
0 & -1 \\
N & \phantom{-}0
\end{pmatrix}.
$$
the \textit{Fricke involution} for $\Gamma_0(N)$. 
This matrix satisfies $ \det(w_N) = N $ and $ w_N^2 = -N{\rm Id}_2$.
In general $w_e$ need not contain an exact involution, i.e. a matrix with zero trace. 

We note that the Hall divisors of $N$ form a multiplicative set with the rule that
multiplication of two Hall divisors $e,e'$ equals $e'':=ee'/\gcd(e,e')^2$. One easily
checks that the product of an element in $W_e$ with an element in $W_{e'}$ gives an
element in $W_{e''}$ (up to scalars). 

For any multiplicative subset $S$ of the set of Hall divisors of $N$, we define the group
\[
\Gamma_0(N) + S := \cup_{e\in S}W_e.
\]
This group is called an \textit{Atkin-Lehner extension} of $ \Gamma_0(N) $ by the involutions
corresponding to $S$. We call $\Gamma_0(N)$ itself also a (trivial) Atkin-Lehner extension.
In this paper we shall specify the set $S$ by its generators. For example, when $N=6$, the
set of Hall divisors $\{1,2,3,6\}$ is denoted by $\left<2,3\right>$.

When $S$ is the set of all Hall divisors we obtain the \textit{full Atkin--Lehner extension}
denoted by $\Gamma_0(N)+$. 

From the above properties it also follows that $\Gamma_0(N)+S$ is a normal extension of
$\Gamma_0(N)$ with quotient group isomorphic to $(\Z/2\Z)^s$, where $s$ is the number of
generators of $S$.

Let $k$ be an integer and $\gamma=\begin{pmatrix}a & b \\ c & d \end{pmatrix}
\in \mathrm{GL}_2^+(\mathbb{R})$. Let $f(\tau)$ be any function from 
$\mathcal{H}$ to $\C$.
The \textit{weight $k$ slash operator} is defined by
$$
f|_{k}\gamma(\tau):=\mathrm{det}(\gamma)^{\frac{k}{2}}(c\tau+d)^{-k}f(\gamma\cdot\tau).
$$
Furthermore, we say $f(\tau)$ is a \textit{modular form of weight $k$ with multiplier $\chi$ for $\Gamma,$} a discrete subgroup of $\mathrm{GL}_2^{+}({\mathbb{R}}),$ if for any $\gamma\in \Gamma,$
 
$$f|_{k}\gamma(\tau)=\chi(\gamma)f(\tau),$$ 

where $\chi:\Gamma\rightarrow\mathbb{C}$ satisfies $|\chi(\gamma)|=1.$ To ease notation, when the weight~$k$ of $f$ is clear from the context, we write $f|\gamma$ for $f|_{k}\gamma$.

We now consider the action of $\Gamma$ on $t(p\tau)$ and $\Theta(p\tau)$.
To do this in an elegant way we consider the set of matrices
$$
M(N,p):=\left\{\left.\begin{pmatrix}a & b\\ cN & d\end{pmatrix}\right|a,b,c,d\in\Z,
ad-Nbc=p\right\}.
$$
Clearly there is a left action of $\Gamma_0(N)$ on $M(N,p)$ and we denote its quotient
set by $\Gamma_0(N)\b M(N,p)$. It consists of $p+1$ elements for which we have a
standard set of representatives given by
$$
\begin{pmatrix}p & 0\\0 & 1\end{pmatrix},\quad \begin{pmatrix}1 & b\\0 & p\end{pmatrix}
\quad(b=0,1,\ldots,p-1).
$$
Clearly the right action of $\Gamma_0(N)$ permutes these cosets and one easily verifies
that this action is transitive. More generally, any $w\in\Gamma_0(N)+$ acts on 
$\Gamma_0(N)\b M(N,p)$ via $\gamma\mapsto w^{-1}\gamma w$. 

In the next lemma we need to extend $\chi$ (see Lemma \ref{gamma-orbit1})
to $M(N,p)$. This can be done
in a natural way by assigning to $\gamma\in M(N,p)$ the value $\chi(d)$ where
$d$ is the lower right hand entry of $\gamma$. We assume here that the character
$\chi$ is already given on $\Gamma_0(N)$ in that way. One naturally has
$\chi(\sigma\gamma)=\chi(\sigma)\chi(\gamma)$  for all $\sigma\in\Gamma_0(N)$
and $\Gamma\in M(N,p)$. 

\begin{lemma}\label{gamma-orbit1}
Let $p$ be a prime not dividing $N$ and $H$ a form of weight $1$ with respect to
$\Gamma_0(N)$ with odd character $\chi$.
Then $\Gamma_0(N)$ permutes the elements of the following set,
$$
\left\{\chi(\gamma)^{-1}\frac{H|\gamma}{H}\right\}_{[\gamma]\in\Gamma_0(N)\b M(N,p)}.
$$
One easily verifies that the elements of this set are independent of the choice of the $\gamma$'s
in their equivalence class $[\gamma]\in\Gamma_0(N)\b M(N,p)$. 
\end{lemma}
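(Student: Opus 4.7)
The plan is to reduce everything to two formal manipulations: the associativity of the weight-$1$ slash operator, and the multiplicativity of the extended character $\chi$ on products mixing $\Gamma_0(N)$ and $M(N,p)$. Set $f_\gamma:=\chi(\gamma)^{-1}(H|\gamma)/H$. Since $H|\gamma$ and $H$ both carry weight $1$, the ratio $f_\gamma$ is a meromorphic function of weight $0$, so the slash action of any $\sigma\in\Gamma_0(N)$ on $f_\gamma$ is simply precomposition with the M\"obius transformation $\tau\mapsto\sigma\tau$.

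For the well-definedness of $f_\gamma$ on the coset $[\gamma]$ I would take $\gamma'=\sigma_0\gamma$ with $\sigma_0\in\Gamma_0(N)$; associativity then gives $H|\gamma'=(H|\sigma_0)|\gamma=\chi(\sigma_0)\,H|\gamma$, while the multiplicativity $\chi(\sigma_0\gamma)=\chi(\sigma_0)\chi(\gamma)$ stated just before the lemma yields $\chi(\gamma')=\chi(\sigma_0)\chi(\gamma)$, so the factor $\chi(\sigma_0)$ cancels and $f_{\gamma'}=f_\gamma$. The analogous right-sided identity $\chi(\gamma\sigma)=\chi(\gamma)\chi(\sigma)$ follows from the same direct matrix computation: the lower-left entry of any matrix in $M(N,p)$ is divisible by $N$, hence the lower right entry of such a product is congruent modulo $N$ to the product of the two lower right entries.

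For the main assertion I would first observe that for $\sigma\in\Gamma_0(N)$ the map $[\gamma]\mapsto[\gamma\sigma]$ is a well-defined bijection of the finite set $\Gamma_0(N)\b M(N,p)$ (its inverse is right multiplication by $\sigma^{-1}$), so it permutes the $p+1$ cosets. To match this with the slash action on the $f_\gamma$, let $\gamma'$ be the chosen representative of $[\gamma\sigma]$ and write $\gamma\sigma=\sigma_1\gamma'$ with $\sigma_1\in\Gamma_0(N)$. Then
$$
(H|\gamma)|\sigma=H|(\gamma\sigma)=H|(\sigma_1\gamma')=(H|\sigma_1)|\gamma'=\chi(\sigma_1)\,H|\gamma',
$$
while $H|\sigma=\chi(\sigma)H$, so
$$
f_\gamma|\sigma=\chi(\gamma)^{-1}\,\frac{\chi(\sigma_1)\,H|\gamma'}{\chi(\sigma)\,H}=\frac{\chi(\sigma_1)\chi(\gamma')}{\chi(\gamma)\chi(\sigma)}\,f_{\gamma'}.
$$
Applying extended multiplicativity to both sides of $\gamma\sigma=\sigma_1\gamma'$ yields $\chi(\gamma)\chi(\sigma)=\chi(\sigma_1)\chi(\gamma')$, the scalar collapses to $1$, and so $f_\gamma|\sigma=f_{\gamma'}$. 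As $[\gamma]$ runs through $\Gamma_0(N)\b M(N,p)$, so does $[\gamma']=[\gamma\sigma]$, and therefore $\sigma$ permutes the set $\{f_\gamma\}$ as claimed.

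The main obstacle, if one can call it that, is keeping the multiplicativity of the extended character straight across the various products that arise; once that matrix identity is secured, all the character factors cancel in exactly the way required, and the rest is routine bookkeeping with the slash operator.
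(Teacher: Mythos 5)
Your proof is correct and follows essentially the same route as the paper: decompose $\gamma\sigma=\sigma_1\gamma'$ with $\sigma_1\in\Gamma_0(N)$, use $H|\sigma=\chi(\sigma)H$, and cancel the character factors via multiplicativity of the extended $\chi$. Your additional explicit checks (well-definedness on cosets, bijectivity of $[\gamma]\mapsto[\gamma\sigma]$, and the right-sided multiplicativity $\chi(\gamma\sigma)=\chi(\gamma)\chi(\sigma)$ via the matrix entries) are details the paper leaves implicit, but the argument is the same.
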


\begin{proof}
Let $\sigma\in\Gamma_0(N)$. Then for any $\gamma\in M(N,p)$ there exist $\gamma'\in M(N,p)$ and
$\xi\in\Gamma_0(N)$ such that $\xi\gamma'=\gamma\sigma$. We easily see that
$$
\chi(\gamma)^{-1}\frac{H|\gamma\sigma}{H|\sigma}=
\chi(\gamma)^{-1}\frac{H|\xi\gamma'}{H|\sigma}=
\chi(\gamma)^{-1}\chi(\xi)\chi(\sigma)^{-1}\frac{H|\gamma'}{H}=
\chi(\gamma')^{-1}\frac{H|\gamma'}{H}.
$$
Here we used $H|\sigma=\chi(\sigma)H$ for any $\sigma\in\Gamma_0(N)$
and $\chi(\xi)\chi(\gamma')=\chi(\gamma)\chi(\sigma)$. 
\end{proof}

\begin{lemma}\label{gamma-orbit2}
Let $\Gamma$ be an Atkin-Lehner extension of $\Gamma_0(N)$ and $p$ a prime 
not dividing $N$. Let $H$ be a modular form of weight $2$
with respect to $\Gamma$ with a character $\chi$ which is trivial 
when restricted to $\Gamma_0(N)$. Then $\Gamma$ permutes the elements of the set
$$
\left\{\frac{H|\gamma}{H}\right\}_{[\gamma]\in\Gamma_0(N)\b M(N,p)}.
$$
One easily verifies that the elements of this set are independent of the choice of
$\gamma$ in its equivalence class $[\gamma]\in\Gamma_0(N)\b M(N,p)$. 
\end{lemma}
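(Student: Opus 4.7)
First I would verify the parenthetical independence claim. If $\gamma$ is replaced by $\xi\gamma$ with $\xi\in\Gamma_0(N)$, then $H|\xi\gamma=(H|\xi)|\gamma=\chi(\xi)H|\gamma=H|\gamma$, since $\chi$ is trivial on $\Gamma_0(N)$ by hypothesis, so the ratio $H|\gamma/H$ depends only on the class $[\gamma]\in\Gamma_0(N)\backslash M(N,p)$. The natural action of $\sigma\in\Gamma$ on a function of $\tau$ is $\tau\mapsto\sigma\tau$, and a short automorphy computation (the same one implicit in Lemma \ref{gamma-orbit1}) yields
\[
\frac{(H|\gamma)(\sigma\tau)}{H(\sigma\tau)}\=\chi(\sigma)^{-1}\,\frac{H|\gamma\sigma}{H}(\tau).
\]
It therefore suffices to exhibit, for every $\sigma\in\Gamma$ and every $\gamma\in M(N,p)$, a factorisation $\gamma\sigma=\xi\gamma'$ with $\gamma'\in M(N,p)$ and $\chi(\xi)=\chi(\sigma)$: this gives $H|\gamma\sigma=\chi(\xi)H|\gamma'$, and the ratio is sent to $H|\gamma'/H$, which lies in the set. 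Running the same argument for $\sigma^{-1}$ supplies the inverse, so the induced map is a permutation.

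For $\sigma\in\Gamma_0(N)$ this is literally Lemma \ref{gamma-orbit1} with the simplification $\chi(\sigma)=\chi(\xi)=1$. The novel case is $\sigma=w_e\in W_e$ for a nontrivial Hall divisor $e$ of $N$. Here $\det(\gamma\sigma)=pe$, so the factor $\xi$ must have determinant $e$ and hence lie in $W_e$. The key ingredient is the classical fact that Atkin-Lehner involutions normalise the Hecke correspondence of level $p$: conjugation $\gamma\mapsto w_e^{-1}\gamma w_e$ sends $M(N,p)$ into itself. The Fricke case exhibits the pattern,
\[
w_N^{-1}\begin{pmatrix}a & b\\ cN & d\end{pmatrix}w_N\=\begin{pmatrix}d & -c\\ -bN & a\end{pmatrix}\in M(N,p),
\]
and the general $w_e$ case follows from the divisibility relations $e\mid a_e$, $e\mid d_e$, $\det w_e=e$, $\gcd(e,N/e)=1$ defining $W_e$. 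Setting $\gamma':=w_e^{-1}\gamma w_e\in M(N,p)$ and $\xi:=w_e$ gives the required factorisation $\gamma w_e=w_e\gamma'$. Finally, since $W_e$ is a single $\Gamma_0(N)$-coset in $\Gamma$ and $\chi$ is trivial on $\Gamma_0(N)$, the character $\chi$ is constant on $W_e$; in particular $\chi(\xi)=\chi(w_e)=\chi(\sigma)$, as needed. Composing factorisations handles an arbitrary $\sigma\in\Gamma$ written as a product of elements in the various $W_e$.

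The main obstacle is therefore the Atkin-Lehner/Hecke compatibility $w_e^{-1}M(N,p)w_e\subseteq M(N,p)$. This is not deep, but it is the only structural input beyond what already appears in Lemma \ref{gamma-orbit1}; I would verify it by writing $w_e^{-1}=e^{-1}\mathrm{adj}(w_e)$ and checking directly, using the defining relations of $W_e$, that the apparent denominator $e$ cancels and that the resulting $(2,1)$-entry is divisible by $N$. Everything else reduces, via the automorphy identity above, to the weight-$1$ argument of Lemma \ref{gamma-orbit1}, with the only extra bookkeeping being the tracking of the character $\chi$ across the Atkin-Lehner cosets.
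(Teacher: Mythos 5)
Your proof is correct and takes essentially the same route as the paper: for $w_e\in\Gamma$ one conjugates to get $\gamma'=w_e^{-1}\gamma w_e\in M(N,p)$ and uses $H|w_e=\chi(w_e)H$ together with the cocycle property of the slash operator, which is exactly the paper's one-line computation $\frac{H|\gamma w}{H|w}=\frac{H|w\gamma'}{\chi(w)H}=\frac{H|\gamma'}{H}$. The only difference is that you explicitly flag and sketch the verification that conjugation by Atkin--Lehner elements preserves $M(N,p)$, a fact the paper simply asserts in its modular preparations section.
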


\begin{proof}
Let $w\in\Gamma$ and define $\gamma'=w^{-1}\gamma w$ for any $\gamma\in M(N,p)$.
Then 
$$
\frac{H|\gamma w}{H|w}=
\frac{H|w\gamma'}{\chi(w)H}=
\frac{\chi(w)H|\gamma'}{\chi(w)H}=
\frac{H|\gamma'}{H}.
$$
Here we used $H|w=\chi(w)H$.
\end{proof}

\section{Second order equations}\label{sec:second_order}
This section will be mainly devoted to the proof Theorem \ref{thm:functional2}
and its consequences. Let notations and assumptions be as in Assumption (I).
Let $p$ be a prime not dividing $N$ and consider the polynomial
\be\label{Tp-definition}
T_p(x)=\left(1-\frac{\Theta(p\tau)}{\Theta(\tau)}x\right)\prod_{k=0}^{p-1}
\left(p-\chi(p)^{-1}\frac{\Theta((\tau+k)/p)}{\Theta(\tau)}x\right).
\ee
It has $\Theta(\tau)/\Theta(p\tau)$ as zero. We can rewrite it as
$$
T_p(x)=p^p\prod_{[\gamma]\in\Gamma_0(N)\b M(N,p)}
\left(1-\chi(\gamma)^{-1}\frac{\Theta|\gamma}{\sqrt{p}\Theta}x\right). 
$$
By Lemma \ref{gamma-orbit1} we see that the coefficients of this polynomial are
modular functions with respect to $\Gamma_0(N)$. Moreover the poles of these
coefficients are located at the zeros of $\Theta(\tau)$, hence the coefficients
are polynomials in $t$. The degree of the coefficient of $x^m$ is $\le mr$,
where $r$ is the zero order of $\Theta$.
Thus we have constructed a polynomial $T_p(x,y)$ such that
$T_p(\Theta(\tau)/\Theta(p\tau),t(\tau))=0$.
\begin{example}
We display $T_p$ for $p=5,7$ in the hypergeometric case $F(1/2,1/2;1|16t)$.
\[
T_5=5^5 - 2\cdot5^5 x + 7\cdot5^4 x^2 - 12\cdot5^3 x^3 + 11\cdot5^2 x^4-( 26 - 2^{12} y + 
 2^{16} y^2 )x^5 + x^6
\]
and
\begin{eqnarray*}
T_7&=&7^7 - 4\cdot7^6 x^2 - 16\cdot7^5(1 - 2 y) x^3 - 30\cdot7^4 x^4 - 32\cdot7^3( 1 - 2^5 y) x^5 \\
&&- 4\cdot7^2(5 + 3\cdot2^{10} y - 3\cdot2^{14} y^2) x^6 - 
16(1 - 2^5 y) (3 - 2^{11} y + 2^{15} y^2) x^7 - x^8
\end{eqnarray*}
\end{example}

\begin{proposition}\label{theta-mirror2}
Let $p$ be a prime not dividing $N$ and $T_p(x,y)$ the polynomial we just constructed.
Denote the coefficient of $x^m$ by $T_{p,m}(y)$. Then
\begin{enumerate}
\item[(i)] $T_p(x,y)\in\Z[x,y]$.
\item[(ii)] $T_p(x,y)\is T_{p,p}(y)x^p+T_{p,p+1}(y)x^{p+1}\mod{p^2}$.
\item[(iii)] $T_{p,p+1}(y)\is\chi(p)\mod{p}$.
\item[(iv)] If $\Theta$ has an $\eta$-product representation then $T_{p,p+1}(y)=\chi(p)$
and the degree of $T_{p,p}$ is at most $p-1$. 
\end{enumerate}
\end{proposition}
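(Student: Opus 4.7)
The polynomial $T_p(x)$ is built from the product
$(1-Rx)\prod_{k=0}^{p-1}(p-\chi(p)^{-1}u_kx)$
with $R=\Theta(p\tau)/\Theta(\tau)$ and $u_k=\Theta((\tau+k)/p)/\Theta(\tau)$. I will write $A_s=(-1)^s\chi(p)^{-s}p^{p-s}e_s(u_0,\ldots,u_{p-1})$ for the coefficient of $x^s$ in the pure product, so that $T_{p,0}=A_0=p^p$, $T_{p,s}=A_s-R\,A_{s-1}$ for $1\le s\le p$, and $T_{p,p+1}=-R\,A_p$. The main tools are Lemma \ref{lemma:elementary} (for $q$-expansion bookkeeping) and Lemma \ref{lemma:etaproduct} (for the $\eta$-product refinement in (iv)).

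Parts (i), (ii), (iii) proceed by direct analysis of $q$-expansions. For (i): Lemma \ref{gamma-orbit1} gives that each $T_{p,s}$ is a $\Gamma_0(N)$-invariant meromorphic function with poles only at $[\tau_\infty]$, hence a polynomial in $t$. Setting $H:=\Theta\in 1+q\Z\pow q$, the identity $u_k=H(q^{1/p}\zeta_p^k)/H(q)$ together with the first assertion of Lemma \ref{lemma:elementary} shows $e_s(u_0,\ldots,u_{p-1})\in\Z\pow q$, and then the fact that $R\in\Z\pow q$ puts $T_{p,s}\in\Z\pow q$; since $t\in q+q^2\Z\pow q$, the polynomial in $t$ must have integer coefficients. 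For (ii): the second assertion of Lemma \ref{lemma:elementary} gives $e_s(H(q^{1/p}\zeta_p^k))\in p\Z\pow q$ for $0<s<p$, so $A_s\in p^{p-s+1}\Z\pow q$ in that range; together with $A_0=p^p$, every $A_s$ with $0\le s\le p-1$ lies in $p^2\Z\pow q$, and hence so does $T_{p,s}=A_s-R\,A_{s-1}$. For (iii): the third assertion of Lemma \ref{lemma:elementary} gives $\prod_k\Theta((\tau+k)/p)\equiv\Theta(\tau)\pmod p$ and Fermat applied to $q$-series gives $\Theta(p\tau)\equiv\Theta(\tau)^p\pmod p$, so
\[
R\prod_{k}u_k=\frac{\Theta(p\tau)\prod_k\Theta((\tau+k)/p)}{\Theta(\tau)^{p+1}}\equiv 1\pmod p.
\]
This yields $T_{p,p+1}=-(-1)^p\chi(p)^{-p}R\prod u_k\equiv\chi(p)\pmod p$, using that $\chi$ is quadratic and $p$ is odd.

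For (iv), Lemma \ref{lemma:etaproduct} applied factorwise to the $\eta$-product $\Theta$ yields an identity
$\Theta(p\tau)\prod_k\Theta((\tau+k)/p)=c\,\Theta(\tau)^{p+1}$
where $c$ is a $48$-th root of unity depending only on $p$ and the exponents. Hence $R\prod u_k=c$ is a scalar and $T_{p,p+1}=-(-1)^p\chi(p)^{-p}c$ is constant. By (i) it is a rational integer, by (iii) congruent to $\chi(p)\pmod p$; the only integer roots of unity are $\pm 1$, so for odd $p$ the congruence forces $T_{p,p+1}=\chi(p)$ exactly (and in fact $c=1$). To bound $\deg T_{p,p}\le p-1$, the same $\eta$-identity rewrites the relevant pieces as $A_p=(-1)^p\chi(p)^{-p}/R$ and
\[
R\,A_{p-1}=(-1)^{p-1}\chi(p)^{-(p-1)}\,\Theta(\tau)\sum_{k=0}^{p-1}\frac{1}{\Theta((\tau+k)/p)}.
\]
Since the degree of a polynomial in $t$ equals its pole order at $\tau_\infty$, the task reduces to local analysis at each cusp in $[\tau_\infty]$: using Lemma \ref{lemma:cuspidalzero} one computes the exact vanishing orders of $\Theta(\tau)$, $\Theta(p\tau)$ and each $\Theta((\tau+k)/p)$ and verifies the cancellation needed to improve the generic coefficient bound $pr$ down to $p-1$.

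The main obstacle is this last pole-order computation. The $p$-adic and $\pmod p$ parts are essentially combinatorial, reducing cleanly to symmetric functions of $H(q^{1/p}\zeta_p^k)$ via Lemma \ref{lemma:elementary}; the identification of the constant $T_{p,p+1}$ is then forced by arithmetic once one knows it is both an integer and a $48$-th root of unity. Sharpening the degree bound for $T_{p,p}$, however, requires tracking which cusps of $[\tau_\infty]$ actually support zeros of the various translated copies $\Theta(p\tau)$ and $\Theta((\tau+k)/p)$, and checking that the residual poles of the two summands combine to produce order at most $p-1$ rather than $p$.
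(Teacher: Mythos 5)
Your treatment of (i)--(iii) is correct and follows essentially the same route as the paper: integrality of the $q$-expansion from Lemma \ref{lemma:elementary}, divisibility of $T_{p,s}$ by $p^2$ for $s\le p-1$ from $e_s\in p\Z\pow q$ for $0<s<p$ combined with the explicit powers of $p$ carried by $A_s$, and (iii) from the $p$-th symmetric function together with $\Theta(q^p)\is\Theta(q)^p\mod{p}$. For the first half of (iv) you pin down the constant $T_{p,p+1}$ by a slightly different but valid argument: an integer that is a root of unity up to sign must be $\pm1$, and the congruence from (iii) then fixes the sign for $p>2$. The paper instead gets the root of unity in Lemma \ref{lemma:etaproduct} to be $\exp(2\pi i(p-1)\sum_m me_m/48)=1$ outright, since $\sum_m me_m=0$ for an $\eta$-product lying in $1+q\Z\pow q$; either way is fine. (Minor slip: your expression for $R\,A_{p-1}$ is missing a factor of $p$, but nothing depends on it.)

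The genuine gap is the degree bound $\deg T_{p,p}\le p-1$, which you set up correctly but explicitly leave unfinished, calling the pole-order computation ``the main obstacle.'' The missing step is in fact short and requires none of the cusp-by-cusp bookkeeping or cancellation between the summands $A_p$ and $R\,A_{p-1}$ that you anticipate. Since $\Theta$ is an $\eta$-product, its unique zero orbit $[\tau_\infty]$ is a cusp $a=r/s$, and $\deg T_{p,p}$ equals the pole order of $T_{p,p}$ at $a$. Every one of the $p$ factors entering $T_{p,p}$ has the form $\Theta(\gamma\tau)/\Theta(\tau)$ with $\gamma\in M(N,p)$, so its pole order at $a$ is $r$ minus the vanishing order of the numerator there. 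By Lemma \ref{lemma:cuspidalzero}, whether an $\eta$-product vanishes at a rational point depends only on the numbers $\gcd(s,m)$ for $m\mid N$; since $\det\gamma=p$ is coprime to $N$, the maps $\tau\mapsto p\tau$ and $\tau\mapsto(\tau+k)/p$ send $r/s$ to a rational whose denominator has the same gcd with each $m\mid N$, so each numerator $\Theta(p\tau)$, $\Theta((\tau+k)/p)$ also vanishes at $a$. Hence each factor has pole order strictly less than $r$, the product of $p$ of them has pole order strictly less than $pr\le p$, and being a nonnegative integer that order is at most $p-1$. This single observation closes the gap.
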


\begin{proof}
We first determine the $q$-expansion of \eqref{Tp-definition}. Since $\Theta(q)\in1+q\Z\pow q$
we can deduce that $T_p(x)\in\Z\pow q[x]$. Hence $T_p(x)\in\Z\pow t[[x]]$. Since
the coefficients of $T_p(x)$ are polynomials in $t$ we see that (i) follows.

To prove (ii) we observe that by Lemma \ref{lemma:elementary}
$$
\sum_{k=0}^{p-1}\frac{\Theta(\tau)}{\Theta((\tau+k)/p)}=
\sum_{k=0}^{p-1}\frac{\Theta(q)}{\Theta(\zeta_p^kq^{1/p})}\is0\mod{p}.
$$
Hence,
\begin{eqnarray*}
T_p(x)&=&\left(x-\frac{\Theta(q}{\Theta(q^p)}\right)\prod_{k=0}^{p-1}
\left(x-\chi(p)p\frac{\Theta(q)}{\Theta(\zeta_p^kq^{1/p})}\right)T_{p,p+1}(t)\\
&\is&T_{p,p+1}(t)\left(x-\frac{\Theta(q)}{\Theta(q^p)}\right)
\left(x^p-\chi(p)px^{p-1}\sum_{b=0}^{p-1}\frac{\Theta(q)}{\Theta(\zeta_p^bq^{1/p})}
\right)\mod{p^2}\\
&\is&T_{p,p+1}(t)\left(x-\frac{\Theta(q}{\Theta(q^p)}\right)x^p\mod{p^2}.
\end{eqnarray*}
This proves part (ii). To prove (iii) we consider the $q$-expansion of
$$
T_{p,p+1}(t)=\frac{\Theta(q)}{\Theta(q^p)}\prod_{b=0}^{p-1}\frac{\chi(p)\Theta(q)}
{\Theta(\zeta_p^bq^{1/p})}
$$
modulo $p$. By Lemma \ref{lemma:elementary} we find 
$\prod_{b=0}^{p-1}\Theta(\zeta_p^b q^{1/p})\is\Theta(q)\mod{p}$ and
$\Theta(q^p)\is\Theta(q)^p\mod{p}$.
Using this we find that $T_{p,p+1}\is\chi(p)\mod{p}$ as $q$-expansions, hence $t$-expansions.

Finally, the first part of (iv) follows from Lemma \ref{lemma:etaproduct}.
For the second part we observe that
the zero of $\Theta$ must be a cusp, say $a\in\Q$. Since $\gcd(p,N)=1$ Lemma \ref{lemma:cuspidalzero}
implies that all functions $\Theta((\tau+k)/p),k=0,1,\ldots,p$ vanish at $a$. Hence the degree of 
$T_{p,p}$ is stricly less than $pr$, hence $\le p-1$. 
\end{proof}

\begin{proof}[Proof of Theorem \ref{thm:functional2}]
Consider the polynomial $T_p$ found in Proposition \ref{theta-mirror2}. In part (ii)
we found that
$$
T_{p,p}\left(\frac{\Theta(q)}{\Theta(q^p)}\right)^p+
T_{p,p+1}\left(\frac{\Theta(q)}{\Theta(q^p)}\right)^{p+1}
\is0\mod{p^2}.
$$
Hence 
$$
T_{p,p}+T_{p,p+1}\frac{\Theta(q)}{\Theta(q^p)}\is0\mod{p^2}.
$$
Rewritten as power series equality in $t$ we get
$$
T_{p,p}(t)+T_{p,p+1}(t)\frac{F(t)}{F(t^\sigma)}\is0\mod{p^2}.
$$
Suppose $\Theta$ can be written as $\eta$-product. Then, by Proposition
\ref{theta-mirror2}(iv), $T_{p,p+1}=\chi(p)$ and $T_{p,p}$ has degree $\le p-1$.
So modulo $p^2$ the quotient $F(t)/F(t^\sigma)$ equals a polynomial in $t$ of degree $\le p-1$. 
Since $F(t^\sigma)\is 1\mod{t^p}$ it follows that this polynomial must be $F_p(t)$. 

Let us now suppose that $\Theta(q)$ is not an $\eta$-product and assume $r<1/2$.
Then, by Proposition \ref{theta-mirror2}(iii), $T_{p,p+1}(t)=\chi(p)+pB(t)$ for some
polynomial $B(t)\in\Z[t]$ of degree $\le(p+1)r$. Moreover, $T_{p,p}(t)$ has degree $\le pr$.
Multiply by $\chi(p)-pB(t)$ on both sides to obtain
$$
(\chi(p)-pB(t))T_{p,p}(t)+\frac{F(t)}{F(t^\sigma)}\is0\mod{p^2}.
$$
Hence modulo $p^2$ the quotient $F(t)/F(t^\sigma)$ equals a polynomial in $t$ of degree 
$\le \floor{pr}+\floor{(p+1)r}$, which is $\le p-1$ because $r<1/2$.
We finish our proof as before. 
\end{proof}

\begin{corollary}\label{specialcongruence2}
Assume that Assumption (I) holds.
Let $p$ be a prime not dividing $N$ and $T_p(x,y)$ the polynomial constructed above.
Let $\alpha$ be a quadratic imaginary number with positive imaginary part.
Suppose that $t(\alpha)$ is a $p$-adic integer and suppose $T_p(\vartheta,t(\alpha))=0$
has a $p$-adic unit $\vartheta_p$ as solution. Then
$$
F_p(t(\alpha))\is\vartheta_p\mod{p^2}.
$$
\end{corollary}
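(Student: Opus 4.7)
The plan is to deduce the pointwise supercongruence by combining the structural information on $T_p$ modulo $p^2$ from Proposition \ref{theta-mirror2} with the functional supercongruence of Theorem \ref{thm:functional2}; the two inputs will force $F_p(t(\alpha))$ and $\vartheta_p$ to satisfy the same linear relation modulo $p^2$ whose leading coefficient is a $p$-adic unit, so that they must agree modulo $p^2$.

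First I would extract information from the hypothesis $T_p(\vartheta_p,t(\alpha))=0$. By Proposition \ref{theta-mirror2}(ii) one has $T_p(x,y)\is T_{p,p}(y)x^p+T_{p,p+1}(y)x^{p+1}\mod{p^2}$, so specializing $(x,y)=(\vartheta_p,t(\alpha))$ and dividing by the $p$-adic unit $\vartheta_p^p$ yields
$$T_{p,p}(t(\alpha))+T_{p,p+1}(t(\alpha))\,\vartheta_p\is 0\mod{p^2}.$$

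Next I would reinterpret Theorem \ref{thm:functional2} as a polynomial identity. The proof of that theorem already records the power-series congruence $T_{p,p}(t)+T_{p,p+1}(t)\,F(t)/F(t^\sigma)\is 0\mod{p^2}$; combining it with $F(t)/F(t^\sigma)\is F_p(t)\mod{p^2}$ gives an identity of polynomials in $\Z_p[t]$ modulo $p^2$,
$$T_{p,p}(t)+T_{p,p+1}(t)\,F_p(t)\is 0\mod{p^2}.$$
Because $t(\alpha)\in\Z_p$ by hypothesis and the coefficients of $T_{p,p},T_{p,p+1},F_p$ are integers, substitution $t=t(\alpha)$ is legal, and subtracting from the previous congruence produces
$$T_{p,p+1}(t(\alpha))\bigl(F_p(t(\alpha))-\vartheta_p\bigr)\is 0\mod{p^2}.$$

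The final step is to verify that $T_{p,p+1}(t(\alpha))$ is a $p$-adic unit so that the cancellation is valid. This is immediate from Proposition \ref{theta-mirror2}(iii), which gives $T_{p,p+1}(y)\is\chi(p)\mod{p}$; since $p\nmid N$ the value $\chi(p)$ is a root of unity, hence a unit in $\Z_p$, and the same is therefore true of $T_{p,p+1}(t(\alpha))$. Cancelling delivers $F_p(t(\alpha))\is\vartheta_p\mod{p^2}$. The whole argument is bookkeeping on top of Theorem \ref{thm:functional2} and Proposition \ref{theta-mirror2}, so I do not foresee a genuine obstacle; the only subtle point, namely the unit property of $T_{p,p+1}(t(\alpha))$, is handled by Proposition \ref{theta-mirror2}(iii).
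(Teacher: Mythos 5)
Your proposal is correct and follows essentially the same route as the paper: divide the specialized relation $T_p(\vartheta_p,t(\alpha))=0$ by the unit $\vartheta_p^p$ using Proposition \ref{theta-mirror2}(ii), compare with the polynomial congruence $T_{p,p}(t)+T_{p,p+1}(t)F_p(t)\is0\mod{p^2}$ extracted from the proof of Theorem \ref{thm:functional2}, and cancel the unit $T_{p,p+1}(t(\alpha))$ via Proposition \ref{theta-mirror2}(iii). Your extra care in justifying the substitution $t=t(\alpha)$ into the polynomial congruence is a welcome explicitation of a step the paper leaves implicit.
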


\begin{proof}
Using the notation from the proof of Proposition \ref{theta-mirror2} we get
$$
T_{p,p+1}(t(\alpha))\vartheta_p^{p+1}+T_{p,p}(t(\alpha))\vartheta_p^p\is0\mod{p^2}.
$$
After division by $\vartheta_p^p$ we get 
$T_{p,p+1}(t(\alpha))\vartheta_p+T_{p,p}(t(\alpha))\is0\mod{p^2}$. From the proof
of Theorem \ref{thm:functional2} we derive that 
$T_{p,p+1}(t(\alpha))F_p(t(\alpha))+T_{p,p}(t(\alpha))\is0\mod{p^2}$. Since
$T_{p,p+1}(t(\alpha))\is\chi(p)\mod{p}$ is a $p$-adic unit, our corollary follows.
\end{proof}

We are now ready to prove Theorems \ref{thm:cuspidal2} and \ref{thm:CMpoints2}.

\begin{proof}[Proof of Theorem \ref{thm:cuspidal2}]
We apply Corollary \ref{specialcongruence2} in the case when $\tau_0\in\Q$ is a cusp
whose denominator divides $N$. It turns out that $T_p(x,t(\tau_0))=(x-\epsilon_p)(x-p\epsilon'_p)^p$
for some $\epsilon_p,\epsilon'_p\in\{\pm1\}$. But we will not prove this. Instead we concentrate
on finding $\epsilon_p$.

We summarize the data from Appendix B for the cases at hand.
\begin{center}
\begin{tabular}{|l|c|c|c|}
\hline
{\rm case} & Level $N$ & $\chi$ & $t_0$ \\
\hline
$F(1/2,1/2,1|t)$ & $4$ & $\chi_{-4}$ & $t(1)=1/16$\\
$F(1/3,2/3,1|t)$ & $3$ & $\chi_{-3}$ & $t(1)=1/27$\\
{\rm Zagier} {\bf A} & $6$ & $\chi_{-3}$ & $t(1)=1/8,t(1/2)=-1$\\
{\rm Zagier} {\bf C} & $6$ & $\chi_{-3}$ & $t(1)=1/9,t(1/3)=1$\\
{\rm Zagier} {\bf E} & $8$ & $\chi_{-4}$ & $t(1)=1/8,t(1/4)=1/4$\\
{\rm Zagier} {\bf F} & $6$ & $\chi_{-3}$ & $t(1/2)=1/9,t(1/3)=1/8$\\
\hline
\end{tabular}
\end{center}
Notice that the arguments $\tau_0$ have the form $1/e$, where $e|N$ and
$\gcd(e,N/e)\in\{1,2\}$. Suppose that $p\is1\mod{e}$. We compute
$\chi(p)p\Theta(\tau)/\Theta((\tau+k)/p)$ with $\tau=1/e+i\epsilon$
and $k=(p-1)/e$ and let $\epsilon\downarrow0$. Note that $(\tau_0+k)/p=1/e+i\epsilon/p$. 
We get
$$
\lim_{\epsilon\downarrow0}\chi(p)p\frac{\Theta(1/e+i\epsilon)}{\Theta(1/e+i\epsilon/p)}=
\chi(p).
$$
When $e=1,2$ we conclude that we get the value $\chi(p)$. When $e=3,4$ (in case $N=6,8$)
we are given that $p\is1\mod{e}$ and $p$ odd, hence $\chi(p)=1$. 

Suppose now that $p\is-1\mod{e}$ and $e=3,4$.
We perform a similar computation as in the previous case,
but now with $k=-(p+1)/e$. Note that $(\tau_0+k)/p=-1/e+i\epsilon/p$. We get
$$
\lim_{\epsilon\downarrow0}\chi(p)p\frac{\Theta(1/e+i\epsilon)}{\Theta(-1/e+i\epsilon/p)}
=\lim_{\epsilon\downarrow0}\chi(p)\frac{\Theta(1/e+i\epsilon)}{\Theta(-1/e+i\epsilon)}.
$$
Now use Lemma \ref{lemma:cusplimit} with $r/s=-1/e$. Then we compute that $d\is-1\mod{e}$
and $d\is1\mod{N/e}$. But $N/e=2$ in all cases at hand, so $d\is-1\mod{N}$. 
Lemma \ref{lemma:cusplimit} then implies that we get the value $\chi(p)\chi(-1)=\chi(-p)=1$. 
\end{proof}

\begin{proof}[Proof of Theorem \ref{thm:CMpoints2}] 
Let $c'=-c$ and $d'=d+c(\alpha+\overline\alpha)$. Note that $c'\alpha+d'=c\overline\alpha+d$
and $p=(c')^2|\alpha|^2+dd'$. Suppose $q$ is a prime dividing $c'$ and $d'$. Then
we see that $q|p$, hence $q=p$. Since $\gcd(p,N)=1$ we find that $p$ also divides
$d$ and $c'|\alpha|^2$. Hence $p^2|p$, which is a contradiction.

We conclude that $\gcd(c',d')=1$. 
Choose $a,b\in\Z$ such that $ad'-bc'=1$. We now apply Corollary \ref{specialcongruence2} with 
$\tau_0=\alpha$. Notice that
$$
\frac{a\alpha+b}{c'\alpha+d'}=\frac{1}{p}(a\alpha+b)(c'\overline\alpha+d')=
\frac{1}{p}(\alpha+ac'|\alpha|^2+bc'(\alpha+\overline\alpha)+bd').
$$
Let $k=ac'|\alpha|^2+bc'(\alpha+\overline\alpha)+bd'$ and compute 
$\chi(p)p\Theta(\alpha)/\Theta((\alpha+k)/p)$. 
By modularity we have 
$$
\Theta\left(\frac{\alpha+k}{p}
\right)=\Theta\left(\frac{a\alpha+b}{c'\alpha+d'}\right)=\chi(d')(c'\alpha+d')\Theta(\alpha)
=\chi(d')(c\overline\alpha+d)\Theta(\alpha).
$$
Note also that $p=c\cdot c|\alpha|^2+dd'$, hence $p\is dd'\mod{N}$ and $\chi(p)=\chi(d)\chi(d')$.
So we find that
$$
\chi(p)p\frac{\Theta(\alpha)}{\Theta((\alpha+k)/p)}=\chi(d)(c\alpha+d),
$$
which is a $p$-adic unit. The desired supercongruence now follows. 
\end{proof}

\section{Third order differential equations}\label{sec:third_order}
Much of this section runs parallel to Section \ref{sec:second_order}
and we shall mainly indicate the differences. 
Let notations and assumptions be as in Assumption (II).
Let $p$ be a prime not dividing $N$ and consider the polynomial
\be\label{Tp-definition2}
T_p(x)=\left(1-\frac{\Theta(p\tau)}{\Theta(\tau)}x\right)\prod_{k=0}^{p-1}
\left(p^2-\frac{\Theta((\tau+k)/p)}{\Theta(\tau)}x\right).
\ee
It has $\Theta(\tau)/\Theta(p\tau)$ as zero. We can rewrite it as
$$
T_p(x)=p^{2p}\prod_{[\gamma]\in\Gamma_0(N)\b M(N,p)}
\left(1-\frac{\Theta|\gamma}{p\Theta}x\right). 
$$
By Lemma \ref{gamma-orbit2} we see that the coefficients of this polynomial are
modular functions with respect to $\Gamma$. Moreover the poles of these
coefficients are located at the zeros of $\Theta(\tau)$, hence the coefficients
are polynomials in $t$. The degree of the coefficient of $x^m$ is $\le mr$,
where $r$ is the zero order of $\Theta$.
Thus we have constructed a polynomial $T_p(x,y)$ such that
$T_p(\Theta(\tau)/\Theta(p\tau),t(\tau))=0$.

\begin{proposition}\label{theta-mirror3}
Let $p$ be a prime not dividing $N$ and $T_p(x,y)$ the polynomial we just constructed.
Denote the coefficient of $x^m$ by $T_{p,m}(y)$. Then
\begin{enumerate}
\item[(i)] $T_p(x,y)\in\Z[x,y]$.
\item[(ii)] $T_p(x,y)\is T_{p,p}(y)x^p+T_{p,p+1}(y)x^{p+1}\mod{p^3}$.
\item[(iii)] $T_{p,p+1}(y)\is1\mod{p}$.
\item[(iv)] If $\Theta$ has an $\eta$-product representation then $T_{p,p+1}(y)=1$
and the degree of $T_{p,p}$ is at most $p-1$. 
\end{enumerate}
\end{proposition}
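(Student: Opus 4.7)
The plan is to follow the proof of Proposition \ref{theta-mirror2} line by line, with the only systematic change being that the slash operator is now of weight two rather than one. Writing $v := \Theta(p\tau)/\Theta(\tau)$, $u_k := \Theta((\tau+k)/p)/\Theta(\tau)$ and $E_s := e_s(u_0,\ldots,u_{p-1})$ for the elementary symmetric polynomials, we have $T_p(x) = (1-vx)\sum_{s=0}^{p}(-1)^s E_s p^{2(p-s)} x^s$. The arithmetic upgrade is that the factor $p$ occurring in the $x$-factors of the second-order construction is replaced by $p^2$; this extra power of $p$ is precisely what converts every $p^2$-congruence in the second-order proof into a $p^3$-congruence here.

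For part (i), Lemma \ref{gamma-orbit2} shows that the multiset $\{\Theta|\gamma/\Theta\}_{[\gamma]\in\Gamma_0(N)\b M(N,p)}$ is $\Gamma$-stable, so each $T_{p,m}$ is a modular function on $\Gamma$; since its only possible poles lie at the zero of $\Theta$, which is the pole of the Hauptmodul $t$, it must be a polynomial in $t$. Integrality follows because the apparent denominator $\Theta(\tau)^{p+1}$ is a unit in $\Z\pow q$ and the numerator $\Theta(p\tau)\prod_k\Theta((\tau+k)/p)$ is Galois-symmetric over $\Q(\zeta_p)$ and hence lies in $\Z\pow q$, after which $t\in q+q^2\Z\pow q$ promotes the $q$-expansion to $\Z[t]$.

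For part (ii), the coefficient of $x^m$ with $1\le m\le p-1$ equals, up to sign, $E_m p^{2(p-m)}+vE_{m-1}p^{2(p-m+1)}$; when $m\le p-2$ the factor $p^{2(p-m)}\ge p^4$ makes this automatic, while for $m=p-1$ the term $p^2E_{p-1}$ needs $E_{p-1}\is0\mod{p}$, which follows from Lemma \ref{lemma:elementary} applied to $h=\Theta$ (after dividing out the unit $\Theta^{p-1}\in1+q\Z\pow q$). For part (iii), the same lemma gives $E_p\is\Theta(q)/\Theta(q)^p\is\Theta^{1-p}\mod{p}$, while the Frobenius identity gives $v=\Theta(q^p)/\Theta(q)\is\Theta^{p-1}\mod{p}$, so $vE_p\is1\mod{p}$ and therefore $T_{p,p+1}=(-1)^{p+1}vE_p\is1\mod{p}$ for every prime $p$.

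Part (iv) is where the real work sits. The identity $T_{p,p+1}=1$ comes from applying Lemma \ref{lemma:etaproduct} factor-by-factor to $\Theta=\prod_m\eta_m^{e_m}$, which yields $\Theta(p\tau)\prod_k\Theta((\tau+k)/p)=\exp\bigl(2\pi i(p-1)(\sum_m me_m)/48\bigr)\cdot\Theta(\tau)^{p+1}$; the hypothesis $\Theta\in1+q\Z\pow q$ forces $\sum_m me_m=0$, so the root of unity collapses to $1$, giving $vE_p=1$ identically and thus $T_{p,p+1}=(-1)^{p+1}$, which equals $1$ for odd $p$. The degree bound $\deg T_{p,p}\le p-1$ is the main obstacle: following the template of Proposition \ref{theta-mirror2}(iv), I would use that since $\gcd(p,N)=1$, Lemma \ref{lemma:cuspidalzero} forces all $p+1$ translates $\Theta(p\tau),\Theta((\tau+k)/p)$ to vanish at points $\Gamma$-equivalent to the cusp $a$ at which $\Theta$ vanishes, each contributing at least one to the total zero order. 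A careful accounting of the resulting zero and pole orders of $T_{p,p}=(-1)^p[E_p+p^2vE_{p-1}]$ at $a$ should give pole order strictly less than $pr$, yielding $\deg_t T_{p,p}<pr\le p$. The subtle part of this bookkeeping is that $v$ and the individual $u_k$ are not $\Gamma$-modular, only the symmetric combinations $T_{p,m}$ are, so one must exploit the relation $vE_p=1$ (forcing the pole of $v$ and the zero of $E_p$ to cancel) to control the order of the true $\Gamma$-modular combination that defines $T_{p,p}$.
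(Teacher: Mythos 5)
Your proposal is correct and follows essentially the same route as the paper, which itself proves this proposition by declaring it ``very similar'' to Proposition \ref{theta-mirror2} and transporting that argument with $p$ replaced by $p^2$ in the auxiliary factors, exactly as you do: Lemma \ref{gamma-orbit2} for $\Gamma$-invariance and polynomiality in $t$, Lemma \ref{lemma:elementary} for parts (ii) and (iii), and Lemmas \ref{lemma:etaproduct} and \ref{lemma:cuspidalzero} for part (iv). The one step you leave as a sketch --- the strict bound $\deg T_{p,p}<pr\le p$ from the vanishing of all $p+1$ translates of $\Theta$ at the cusp --- is precisely the step the paper also dispatches in a single sentence, so nothing in your argument diverges from or falls short of the source.
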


The proof is very similar to the proof of Proposition \ref{theta-mirror2}.

\begin{proof}[Proof of Theorem \ref{thm:functional3}]
Consider the polynomial $T_p$ found in Proposition \ref{theta-mirror3}.
Following the ame steps as in in the proof Theorem \ref{thm:functional2} we find that
$$
T_{p,p}(t)+T_{p,p+1}\frac{F(t)}{F(t^\sigma)}\is0\mod{p^3}.
$$
Suppose $\Theta$ can be written as $\eta$-product. Then, by Proposition
\ref{theta-mirror3}(iv), $T_{p,p+1}=1$ and $T_{p,p}$ has degree $\le p-1$.
So modulo $p^3$ the quotient $F(t)/F(t^\sigma)$ equals a polynomial in $t$,
which must be $F_p(t)$. 

Let us now suppose that $\Theta(q)$ is not an $\eta$-product and assume $r<1/3$.
Then, by Proposition \ref{theta-mirror3}(iii), $T_{p,p+1}(t)=1+pB(t)$ for some
polynomial $B(t)\in\Z[t]$ of degree $\le(p+1)r$. Moreover, $T_{p,p}(t)$ has degree $\le pr$.
Multiply by $1-pB(t)+p^2B(t)^2$ on both sides to obtain
$$
(1-pB(t)+p^2B(t)^2)T_{p,p}(t)+\frac{F(t)}{F(t^\sigma)}\is0\mod{p^3}.
$$
Hence modulo $p^3$ the quotient $F(t)/F(t^\sigma)$ equals a polynomial in $t$ of degree 
$\le \floor{pr}+2\floor{(p+1)r}$, which is $\le p-1$ because $r<1/3$.
We finish our proof as before. 
\end{proof}

\begin{remark}
We conjecture that Theorem \ref{thm:functional3} also holds when $r=1/3$.
Theorem \ref{thm:functional3extend} gives a proof modulo some ad-hoc congruences. 
\end{remark}

\begin{corollary}\label{specialcongruence3}
Assume that Assumption (II) holds.
Let $p$ be a prime not dividing $N$ and $T_p(x,y)$ the polynomial constructed above.
Let $\alpha$ be a quadratic imaginary number with positive imaginary part.
Suppose that $t(\alpha)$ is a $p$-adic integer and suppose $T_p(\vartheta,t(\alpha))=0$
has a $p$-adic unit $\vartheta_p$ as solution. Then
$$
F_p(t(\alpha))\is\vartheta_p\mod{p^3}.
$$
\end{corollary}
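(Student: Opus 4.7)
My plan is to transcribe the argument of Corollary \ref{specialcongruence2} into the third-order setting, replacing Proposition \ref{theta-mirror2} with Proposition \ref{theta-mirror3} throughout so that all congruences upgrade from $p^2$ to $p^3$. The two inputs are: (a) the approximate factorization $T_p(x,y)\is T_{p,p}(y)x^p+T_{p,p+1}(y)x^{p+1}\mod{p^3}$ from Proposition \ref{theta-mirror3}(ii), and (b) the polynomial identity $T_{p,p}(t)+T_{p,p+1}(t)F_p(t)\is0\mod{p^3}$ in $\Z_p[t]$ that is extracted in the course of proving Theorem \ref{thm:functional3}.

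First, since $\vartheta_p$ is by hypothesis a $p$-adic unit zero of $T_p(x,t(\alpha))$, combining with (a) yields
$$T_{p,p}(t(\alpha))\vartheta_p^p+T_{p,p+1}(t(\alpha))\vartheta_p^{p+1}\is0\mod{p^3},$$
and division by the unit $\vartheta_p^p$ gives
$$T_{p,p}(t(\alpha))+T_{p,p+1}(t(\alpha))\vartheta_p\is0\mod{p^3}.$$
Next, because $t(\alpha)$ is assumed $p$-adically integral, the polynomial identity (b) can be evaluated at $t=t(\alpha)$ to produce
$$T_{p,p}(t(\alpha))+T_{p,p+1}(t(\alpha))F_p(t(\alpha))\is0\mod{p^3}.$$
Subtracting the two displayed congruences gives
$$T_{p,p+1}(t(\alpha))\bigl(\vartheta_p-F_p(t(\alpha))\bigr)\is0\mod{p^3},$$
and Proposition \ref{theta-mirror3}(iii) ensures that $T_{p,p+1}(t(\alpha))\is1\mod p$, hence is a $p$-adic unit. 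Cancelling it yields the desired congruence $F_p(t(\alpha))\is\vartheta_p\mod{p^3}$.

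The one point that deserves careful checking is that the power-series congruence $F(t)\is F_p(t)F(t^\sigma)\mod{p^3}$ supplied by Theorem \ref{thm:functional3} really translates into the polynomial identity (b) with coefficients in $\Z_p$, for otherwise substitution at $t=t(\alpha)$ is not justified. In the $\eta$-product case this is immediate because $T_{p,p+1}(t)=1$ and $\deg T_{p,p}\le p-1$, so $-T_{p,p}(t)$ literally equals $F_p(t)$ modulo $p^3$. In the remaining case with $r<1/3$, the same conclusion is reached after multiplying by the unit $1-pB(t)+p^2B(t)^2\in\Z_p[t]$, exactly as in the proof of Theorem \ref{thm:functional3}. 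I do not expect any genuine obstacle here; the argument is a direct upgrade of the second-order version, and the specialization step is the only place where one must cross-reference the construction of $T_p(x,y)\in\Z[x,y]$ from Proposition \ref{theta-mirror3}(i).
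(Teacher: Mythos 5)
Your argument is correct and is exactly the proof the paper intends: it is the word-for-word transcription of the proof of Corollary \ref{specialcongruence2} to the third-order setting, substituting Proposition \ref{theta-mirror3} and Theorem \ref{thm:functional3} and upgrading all congruences from $p^2$ to $p^3$, which is what the paper means by ``analogous''. Your extra care in verifying that the identity $T_{p,p}(t)+T_{p,p+1}(t)F_p(t)\is0\mod{p^3}$ is a genuine polynomial congruence over $\Z_p$ (in both the $\eta$-product case and the $r<1/3$ case) so that specialization at the $p$-adic integer $t(\alpha)$ is legitimate is a welcome, correct fill-in of a step the paper leaves implicit.
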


The proof is analogous to the proof of Corollary \ref{specialcongruence2}.

\begin{proof}[Proof of Theorems \ref{thm:CMpoints3} and \ref{thm:CMpoints3plus}]
Note that Theorem \ref{thm:CMpoints3plus} implies Theorem \ref{thm:CMpoints3} by
setting $e=1$. So we prove the latter theorem.

Let $c'=-c$ and $d'=d+c(\alpha+\overline\alpha)$. Note that $c'\alpha+d'=c\overline\alpha+d$
and $ep=(c')^2|\alpha|^2+dd'$. Divide the latter by $e$, $p=(c'/e)c'|\alpha|^2+(d/e)d'$.
Suppose $q$ is a prime dividing both $d'$ and $c'/e$. Then $q|p$, hence $q=p$. Since
$\gcd(p,N)=1$ we find that $p$ also divides $d/e$ and $c'|\alpha|^2$. Hence $p^2|p$,
which is clearly a contradiction.

So $\gcd(c'/e,d')=1$ and we can find integers $a',b$ such that $a'd'-b(c'/e)=1$. Multiply by $e$
and set $a=ea'$. We get $ad'-bc'=e$ and $e$ divides $a,d'$. So $a,b,c',d'$ are the entries of
a matrix in $W_e$, the Atkin-Lehner involutions with determinant $e$. 

Notice that
$$
\frac{a\alpha+b}{c'\alpha+d'}=\frac{1}{p}(a\alpha+b)(c'\overline\alpha+d')=
\frac{1}{ep}(e\alpha+ac'|\alpha|^2+bc'(\alpha+\overline\alpha)+bd')=\frac{\alpha+k}{p} with
$$
$k=(a/e)c'|\alpha|^2+b(c'/e)(\alpha+\overline\alpha)+bd'/e$. Note that $k\in\Z$.
Now we apply  Corollary \ref{specialcongruence3} and compute $p^2\Theta(\alpha)/\Theta((\alpha+k)/p)$. 
By modularity we have 
$$
\Theta\left(\frac{\alpha+k}{p}\right)=
\Theta\left(\frac{a\alpha+b}{c'\alpha+d'}\right)=\chi(w_e)e(c'\alpha+d')^2\Theta(\alpha)=
\chi(w_e)e(c\overline\alpha+d)^2\Theta(\alpha).
$$
So we find that
$$
p^2\frac{\Theta(\alpha)}{\Theta((\alpha+k)/p)}=\frac{\chi(w_e)}{e}(c\alpha+d)^2,
$$
which is the $p$-adic unit $\vartheta_p$. The desired supercongruence now follows.
\end{proof}

\begin{proof}[Proof of Theorem \ref{thm:VanHamme2}]
We retrace the steps in the proof of Theorem \ref{thm:CMpoints3plus} and choose 
$a,b,c',d'$ with $e|d,e|a,N|c',ad'-bc'=e$. We have seen that there exists an integer $k$
such that
$$
\frac{\alpha+k}{p}=\frac{a\alpha+b}{c'\alpha+d'}.
$$
We have also seen that the specialization of $x_k:=p^2\Theta(\tau)/\Theta((\tau+k)/p)$ at $\tau=\alpha$ reads
$\vartheta_p=\chi(w_e)(c\alpha+d)^2/e$, a $p$-adic unit. Next we compute the specialisation of the derivative
$\frac{dx_k}{dt}$ at $\tau=\alpha$.  Notice that
$$
\frac{dx_k}{dt}=\frac{d}{dt}\left(p^2\frac{\Theta(\tau)}{\Theta((\tau+k)/p)}\right)
=\frac{1}{t'(\tau)}\times\frac{p^2\Theta(\tau)}{\Theta((\tau+k)/p)}
\left(\frac{\Theta'(\tau)}{\Theta(\tau)}-\frac{1}{p}\frac{\Theta'((\tau+k)/p)}{\Theta((\tau+k)/p)}\right),
$$
where the dash ${}'$ denotes differentiation with respect to $\tau$. We specialize the right hand side
at $\tau=\alpha$. From the modularity of $\Theta$ we get
\[
e\Theta'\left(\frac{a\tau+b}{c'\tau+d'}\right)\left/\Theta\left(\frac{a\tau+b}{c'\tau+d'}\right)\right.
=2c'(c'\tau+d')+(c'\tau+d')^2\frac{\Theta'(\tau)}{\Theta(\tau)}
\]
Now substitute $\tau=\alpha$ to find
$$
e\frac{\Theta'((\alpha+k)/p)}{\Theta((\alpha+k)/p)}=
2c'(c'\alpha+d')+(c'\alpha+d')^2\frac{\Theta'(\alpha)}{\Theta(\alpha)}
$$
and hence,
$$
\frac{dx_k}{dt}(\alpha)=\frac{\chi(w_e)}{et'(\alpha)}(c\alpha+d)^2
\left(\frac{\Theta'(\alpha)}{\Theta(\alpha)}-\frac{1}{ep}2c'(c'\alpha+d')
-\frac{1}{ep}(c'\alpha+d')^2\frac{\Theta'(\alpha)}{\Theta(\alpha)}\right).
$$
Simplify this, using $c'\alpha+d'=c\overline\alpha+d$ and $ep=|c\alpha+d|^2$, to get
$$
\frac{dx_k}{dt}(\alpha)=\frac{\chi(w_e)}{et'(\alpha)}c(c\alpha+d)\left(2+(\alpha-\overline\alpha)
\frac{\Theta'(\alpha)}{\Theta(\alpha)}\right).
$$
Multiply by $t(\alpha)$ and use the definition of $\delta(\alpha)$ to get
\begin{equation}\label{derivative-xk}
t(\alpha)\frac{dx_k}{dt}(\alpha)=\frac{\chi(w_e)}{e}c(c\alpha+d)(\alpha-\overline\alpha)
\delta(\alpha).
\end{equation}
We now prove that $t(\alpha)\frac{dx_k}{dt}(\alpha)\is \theta F_p(t(\alpha))\mod{p^3}$,
where $\theta=\frac{d}{dt}$. 
 
Recall the polynomial $T_p(x,y)\in\Z[x,y]$ from Proposition \ref{theta-mirror3}.
We know that $T_p(x_k(\tau),t(\tau))=0$. After derivation with respect to $t$ we obtain
$\partial_xT_p(x_k,t)\frac{dx_k}{dt}+\partial_yT_p(x_k,t)=0$, where $\partial_xT_p,
\partial_yT_p$ are the partial derivatives of $T_p$. From this we see that $\frac{dx_k}{dt}$
lies in $\Q(x_k,t)$. After specialization of $\tau$ to $\alpha$ we find that 
$\frac{dx_k}{dt}(\alpha)\in\Q(\alpha,t(\alpha))$. And hence also $\delta(\alpha)\in\Q(\alpha,t(\alpha)$.
We now compute the specialization at $\tau=\alpha$ of $\partial_xT_p(x_k,t)$ and $\partial_yT_p(x_k,t)$
considered modulo $p^3$. 

From Proposition \ref{theta-mirror3} we know that
$$
\partial_xT_p(x_k,t)\is T_{p,p+1}(t)(p+1)x_k^p+T_{p,p}(t)px_k^{p-1}\mod{p^3}.
$$
From Corollary \ref{specialcongruence3} we know that $T_{p,p+1}(t(\alpha))\vartheta_p+T_{p,p}(t(\alpha))
\is0\mod{p^3}$. Hence $\partial_xT_p(\vartheta_p,t(\alpha))\is T_{p,p+1}(t(\alpha))\vartheta_p^p\mod{p^3}$.

Again using Proposition \ref{theta-mirror3} we find that
$$
\partial_yT_p(x_k,t)\is \partial_tT_{p,p+1}(t)x_k^{p+1}+\partial_tT_{p,p}(t)x_k^{p}\mod{p^3}.
$$
Recall from the proof of Theorem \ref{thm:functional3} that $T_{p,p+1}(t)F_p(t)+T_{p,p}(t)\is0\mod{p^3}$.
After derivation, 
$$
\partial_tT_{p,p+1}(t)F_p(t)+\partial_tT_{p,p}(t)\is-\partial_tF_p(t)T_{p,p+1}(t)\mod{p^3}.
$$
Multiply on both sides by $x_k^p$ and specialize $\tau$ at $\alpha$. Since $F_p(t(\alpha))\is\vartheta_p
\mod{p^3}$ we obtain
$$
\partial_tT_{p,p+1}(t(\alpha))\vartheta_p^{p+1}+\partial_tT_{p,p}(t(\alpha))\vartheta_p^p
\is-\partial_tF_p(t(\alpha))T_{p,p+1}(t(\alpha))\vartheta_p^p\mod{p^3}.
$$
The left hand side is precisely $\partial_yT_p(x_k,t)$ specialized at $\alpha$. Hence we conclude that
$$
T_{p,p+1}(t(\alpha))\vartheta_p^p\frac{dx_k}{dt}(\alpha)-\partial_tF_p(t(\alpha))T_{p,p+1}(t(\alpha))
\vartheta_p^p\mod{p^3}.
$$
Since $\vartheta_p$ is a $p$-adic unit and $T_{p,p+1}(t)\is1\mod{p}$ by Proposition \ref{theta-mirror3},
we conclude that
$$
\frac{dx_k}{dt}(\alpha)\is\partial_tF_p(t(\alpha))\mod{p^3}.
$$
Multiply by $t(\alpha)$ and invoke \eqref{derivative-xk}. After division by $-\delta(\alpha)$
we obtain
$$
-\frac{1}{\delta(\alpha)}\theta F_p(t(\alpha))\is \frac{\chi(w_e)}{e}
c(\overline\alpha-\alpha)(c\alpha+d)\mod{p^3}.
$$
Add the congruence $F_p(t(\alpha))\is\frac{\chi(w_e)}{e}(c\alpha+d)^2\mod{p^3}$
from Theorem \ref{thm:CMpoints3plus} and we get
$$
\left(1-\frac{1}{\delta(\alpha)}\theta\right)F_p(t(\alpha))\is
\frac{chi(w_e)}{e}(c\overline\alpha+d)(c\alpha+d)\is\chi(w_e)p\mod{p^3}.
$$
\end{proof}

\begin{proof}[Proof of Theorem \ref{thm:Ramanujan}]
Let us start with the identity $F(t(\tau))=\Theta(\tau)$. Take the derivation $\theta=t\frac{d}{dt}$
to get
$$
\theta F(t(\tau))=\frac{t(\tau)}{t'(\tau)}\Theta'(\tau)=\Theta(\tau)
\left(\delta(\tau)-\frac{2\pi it(\tau)}{t'(\tau)}\frac{1}{\pi i(\tau-\overline\tau)}\right).
$$
Multiply this eqality by $1/\delta(\tau)$ and subtract from $F(t(\tau))=\Theta(\tau)$.
We obtain
$$
\left(1-\frac{1}{\delta(\tau)}\theta\right)F(t(\tau))=t(\tau)\frac{2\pi i\Theta(\tau)}{t'(\tau)}
\frac{1}{\delta(\tau)}\frac{1}{\pi i(\tau-\overline\tau)},
$$
which gives Theorem \ref{thm:Ramanujan}. 
It is a straightforward computation to check that $\delta(\tau)$ is invariant under $\Gamma$. 
Similarly it is straightforward to check that $(2\pi i\Theta(\tau)/t'(\tau))^r$ is invariant under
$\Gamma$. Hence $(2\pi i\Theta(\tau)/t'(\tau))^{r}$ is a rational function of $t(\tau)$. 
\end{proof}

\section{Some extras}\label{sec:extras}
In this section we deal with some examples that go beyond the general theorems we have stated.
The first example is Theorem \ref{thm:CMpoints2plus}. 

\begin{proof}[Proof of Theorem \ref{thm:CMpoints2plus}]
We are in the case hypergeometric $F(1/2,1/2;1|16t)$ from Appendix B. In that case the modular
group is $\Gamma_0(4)$ but we extend it to $\Gamma_0(2)$, which normalizes $\Gamma_0(4)$
with index $2$. In particular we need the identy
$\Theta(1/(2\tau+1))=\pm(2\tau+1)\sqrt{t(\tau)}\Theta(\tau)$. Consequently we have
$\Theta(\frac{a\tau+b}{c\tau+d})=\pm(c\tau+d)\sqrt{t(\tau)}\Theta(\tau)$
for any integers $a,b,c,d$ with $ad-bc=1$ and $c\is2\mod{4}$.
We will not fight for the determination of the $\pm$-sign. 

We copy the proof Theorem \ref{thm:CMpoints2}.
Write $c\overline\alpha+d=c'\alpha+d'$, where $c'=-c,d'=d+c(\alpha+\overline\alpha)$.
Choose $a,b\in\Z$ such that $ad'-bc'=1$. Then, after some computation,
$$
\frac{a\alpha+b}{c'\alpha+d'}=\frac{1}{p}(\alpha+k)\quad\text{with}\quad
k=ac'|\alpha|^2+bc'(\alpha+\overline\alpha)+bd'.
$$
Since $c'\is2\mod4$ we get
$$
\chi(p)p\frac{\Theta(\alpha)}{\Theta((\alpha+k)/p)}=
\frac{\chi(p)p}{\chi(d')(c'\alpha+d')\sqrt{t(\alpha)}}.
$$
Using $p/(c'\alpha+d')=c\alpha+d$ our theorem follows as a consequence of Corollary
\ref{specialcongruence2}. 
\end{proof}

The second set of examples concern an extension of Theorem \ref{thm:functional3} and its
consequences. 

\begin{theorem}\label{thm:functional3extend}
Suppose that Assumption (II) holds, except for part (4) where we assume that $r=1/3$.
Then we have
$$
F(t)\is F(t^\sigma)(F_p(t)+(g_p-g_1)t^p)\mod{p^3},
$$
where $g_k$ is the coefficient of $t^k$ in the power series $F(t)$.
\end{theorem}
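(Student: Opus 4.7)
The plan is to rerun the proof of Theorem \ref{thm:functional3} verbatim, noting that the construction of $T_p(x,y)$ in Proposition \ref{theta-mirror3} and the resulting congruence
\[
T_{p,p}(t)+T_{p,p+1}(t)\frac{F(t)}{F(t^\sigma)}\equiv 0\pmod{p^3}
\]
do not require any upper bound on $r$. The hypothesis $r<1/3$ was used only at the very end, to force $F(t)/F(t^\sigma)$ to be congruent modulo $p^3$ to a polynomial of degree $\le p-1$. With $r=1/3$ the degree bound loosens by exactly one, so the new task is to identify the extra $t^p$ coefficient.

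Writing $T_{p,p+1}(t)=1+pB(t)$ with $B\in\Z[t]$ of degree at most $(p+1)r=(p+1)/3$ (using Proposition \ref{theta-mirror3}(iii)) and multiplying the displayed congruence by $1-pB(t)+p^2B(t)^2$, which is the inverse of $T_{p,p+1}(t)$ modulo $p^3$, I obtain $F(t)/F(t^\sigma)\equiv P(t)\pmod{p^3}$ for some $P\in\Z[t]$. The highest-degree contribution to $P$ comes from $p^2B^2T_{p,p}$, whose degree is at most $2(p+1)r+pr=(3p+2)/3=p+2/3$; since degrees are integers, $\deg P\le p$.

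The last step is coefficient matching. Since $t^\sigma=t(q^p)=t^p+O(t^{p+1})$, one has $F(t^\sigma)=1+g_1t^p+O(t^{p+1})$. Reading off coefficients of $t^k$ in $F(t)\equiv F(t^\sigma)P(t)\pmod{p^3}$ gives $[t^k]P=g_k$ for $0\le k\le p-1$ and $[t^p]P+g_1=g_p$, whence $P(t)=F_p(t)+(g_p-g_1)t^p$, which is the claim. The only step requiring genuine attention is the degree bound $\deg P\le p$: the estimate $(3p+2)r$ equals $p+2/3$ precisely when $r=1/3$, just small enough to keep the integer degree $\le p$, and would fail for any $r>1/3$. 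Beyond this tight counting, no real obstacle arises; the argument is a clean one-degree extension of the proof of Theorem \ref{thm:functional3}.
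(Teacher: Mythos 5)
Your proposal is correct and follows exactly the route the paper takes: the paper's own proof simply says "same as Theorem \ref{thm:functional3} except that at the end one only gets degree $\le p$" and then matches the $t^p$ coefficient using $F(t^\sigma)\is 1+g_1t^p\mod{t^{p+1}}$. Your explicit degree count $\lfloor pr\rfloor+2\lfloor(p+1)r\rfloor\le p$ at $r=1/3$ and the coefficient comparison fill in precisely the details the paper leaves implicit.
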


\begin{proof}
The proof is the same as the proof of Theorem \ref{thm:functional3}, except at the end, where we
must conclude that $F(t)/F(t^\sigma)$ equals a polynomial of degree $\le p$ modulo $p^3$.  
Recall that $t^\sigma=t^p+O(t^{p+1})$. Hence $F(t^\sigma)\is 1+g_1t^p\mod{t^{p+1}}$ 
and $F(t)/F(t^\sigma)\is F_p(t)+(g_p-g_1)t^p$. 
\end{proof}

\begin{corollary}
Consider the case hypergeometric $F(1/3,1/2,2/3;1,1|108t)$. Then for all primes $p>3$,
$$
F(t)\is F(t^\sigma)F_p(t)\mod{p^3}.
$$
\end{corollary}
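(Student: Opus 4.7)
The plan is to apply Theorem \ref{thm:functional3extend} and then show that the ``error term'' $(g_p-g_1)t^p$ vanishes modulo $p^3$ for this specific hypergeometric family. Writing $F(t)=\sum_{k\ge0}g_kt^k={}_3F_2(1/3,1/2,2/3;1,1|108t)$, I would first combine the hypergeometric reductions $(1/3)_k(2/3)_k=(3k)!/(k!\,27^k)$ and $(1/2)_k=(2k)!/(k!\,4^k)$ with the normalization $108=4\cdot 27$ to obtain the closed form
\[
g_k=\frac{(3k)!\,(2k)!}{k!^5}=\binom{3k}{k}\binom{2k}{k}^2.
\]
In particular $g_1=3\cdot 2^2=12$, which is the value I will have to hit mod $p^3$.

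Next I need to verify that Assumption~(II) holds for this case, with zero-order $r=1/3$; this is where the hypothesis of Theorem \ref{thm:functional3extend} applies rather than Theorem \ref{thm:functional3} itself. The data in Appendix~C identify the associated modular group (an Atkin-Lehner extension of $\Gamma_0(6)$), the Hauptmodul $t(\tau)$ and the weight-$2$ form $\Theta(\tau)=F(t(\tau))$, whose unique zero lies at an elliptic point of order $3$ of $\Gamma$, explaining the value $r=1/3$. Because $\Theta$ is not an $\eta$-product and $r=1/3$ falls exactly on the boundary excluded by the $r<1/3$ hypothesis, Theorem \ref{thm:functional3extend} is precisely the statement I need, yielding
\[
F(t)\equiv F(t^\sigma)\bigl(F_p(t)+(g_p-g_1)t^p\bigr)\pmod{p^3}
\]
for every prime $p>3$ (the restriction $p>3$ coming from the requirement that $p\nmid N=6$).

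The remaining task is to prove $g_p\equiv 12\pmod{p^3}$ for $p>3$, and this is where I expect to use Wolstenholme's theorem in its strong form: for every prime $p\ge 5$ and every positive integer $n$,
\[
\binom{np}{p}\equiv n\pmod{p^3}.
\]
Applied to our closed form,
\[
g_p=\binom{3p}{p}\binom{2p}{p}^2\equiv 3\cdot 2^2=12=g_1\pmod{p^3},
\]
so $(g_p-g_1)t^p F(t^\sigma)\equiv 0\pmod{p^3}$ and the corollary follows.

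The main obstacle here is not the computation but the verification of Assumption~(II) for this precise case, in particular identifying the weight-$2$ form and confirming that its zero at the elliptic point contributes order exactly $1/3$, so that the polynomial bound in Proposition~\ref{theta-mirror3}(iv)/the proof of Theorem \ref{thm:functional3extend} yields a polynomial of degree $\le p$ rather than $\le p-1$; after that, the arithmetic reduction to Wolstenholme's congruence is essentially a one-line observation. One should also note that $p=3$ is genuinely excluded, both because $3\mid N$ and because Wolstenholme fails at $p=3$, so the restriction $p>3$ in the statement is sharp from both directions.
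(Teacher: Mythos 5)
Your proposal is correct and follows essentially the same route as the paper: apply Theorem \ref{thm:functional3extend} and then kill the error term $(g_p-g_1)t^p$ via the Wolstenholme--Ljunggren congruence, which the paper phrases as $\binom{3p}{2p}\binom{2p}{p}^2\is\binom{3}{2}\binom{2}{1}^2\mod{p^3}$ (identical to your $\binom{3p}{p}\binom{2p}{p}^2\is 3\cdot2^2$ since $\binom{3p}{p}=\binom{3p}{2p}$). Your explicit derivation of $g_k=(3k)!\,(2k)!/k!^5$ and the remark that $r=1/3$ sits exactly on the boundary excluded by Assumption (II)(4) are consistent with the data in Appendix C and with why the extension theorem, rather than Theorem \ref{thm:functional3} itself, is needed here.
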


\begin{proof}
We apply Theorem \ref{thm:functional3extend}. We need to verify that $g_p\is g_1\mod{p^3}$
with $g_k=(3k)!/k!^3$. This congruence follows from Wolstenholme's congruence
$\binom{3p}{2p}\binom{2p}{p}^2\is \binom{3}{2}\binom{2}{1}^2\mod{p^3}$. 
\end{proof}

We finally like to extend Theorem \ref{thm:functional3} to the case 
hypergeometric $F(1/6,1/2,5/6;1,1|1728t)$. In that case $\Theta$ is not a modular form, but 
$\Theta^2=E_4$ is the weight 4 modular form with respect to $SL(2,\Z)$. 
We apply our ideas to the series $\tilde{F}=F(1/6,1/2,5/6;1,1|1728t)^2$, which satisfies a
differential equation of order $6$. Then, clearly, $\tilde{\Theta}=E_4$ and $t(\tau)$ is still
$1/J(\tau)$. Notice also that $1/J$ has its pole at $\tau=(1+\sqrt{-3})/2$ and $E_4$ has a 
zero of order $1/3$ at that point. So $r=1/3$. 

\begin{lemma}\label{lemma:fullmodular}
Denote the coefficients of $\tilde{F}(t)$ by $\tilde{g}_k$ and the $p$-trucation of
$\tilde{F}(t)$ by $\tilde{F}_p(t)$. Then $\tilde{g}_p\is 2g_p\mod{p^3}$ and $\tilde{g}_1=2g_1$.
Moreover, $g_p\is g_1\mod{p^3}$. 
Finally,
$$
\tilde{F}_p(t)\is F_p(t)^2\mod{p^3}.
$$
\end{lemma}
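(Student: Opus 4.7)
The approach exploits the product identity $\tilde{F}(t)=F(t)^2$, which yields the Cauchy product $\tilde{g}_k=\sum_{j=0}^{k}g_jg_{k-j}$. From this, $\tilde{g}_1=2g_1$ is immediate using $g_0=1$, while $g_p\is g_1\mod{p^3}$ follows from the Wolstenholme congruence $\binom{ap}{bp}\is\binom{a}{b}\mod{p^3}$ (valid for $p\ge5$) applied to each factor of $g_j=\binom{6j}{3j}\binom{3j}{j}\binom{2j}{j}$.

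The two remaining assertions --- $\tilde{g}_p\is2g_p\mod{p^3}$ and $\tilde{F}_p(t)\is F_p(t)^2\mod{p^3}$ --- both reduce to the single $p$-adic valuation estimate $v_p(g_jg_m)\ge3$ whenever $1\le j,m\le p-1$ and $j+m\ge p$. To establish this I would first rewrite $g_j=(6j)!/\bigl((3j)!\,(j!)^3\bigr)$, whereupon Legendre's formula $v_p(n!)=\lfloor n/p\rfloor$ (applicable here since $6(p-1)<p^2$ for $p\ge5$) gives, for $1\le j\le p-1$,
\[
v_p(g_j)=\left\lfloor\frac{6j}{p}\right\rfloor-\left\lfloor\frac{3j}{p}\right\rfloor=\left\lfloor\frac{3j}{p}+\frac{1}{2}\right\rfloor,
\]
the second equality being Hermite's identity $\lfloor2x\rfloor-\lfloor x\rfloor=\lfloor x+\tfrac12\rfloor$. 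This closed form immediately shows that $j\mapsto v_p(g_j)$ is weakly increasing; moreover, a short inspection of the fractional part of $3j/p$ (equivalently, comparing $\lfloor 3j/p+\tfrac12\rfloor$ and $\lfloor 7/2-3j/p\rfloor$, whose sum is always $3$) shows $v_p(g_j)+v_p(g_{p-j})=3$ for every $1\le j\le p-1$.

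With these facts the two congruences fall out. Since $\tilde{g}_p-2g_p=\sum_{j=1}^{p-1}g_jg_{p-j}$ and every summand has $p$-adic valuation exactly $3$, the first follows. For the polynomial congruence, in $F_p(t)^2-\tilde{F}_p(t)$ the coefficients in degrees $0,\dots,p-1$ vanish on the nose by the Cauchy product, while in degrees $p\le k\le 2p-2$ the coefficient equals $\sum g_jg_{k-j}$ over $j,k-j\in[1,p-1]$ with $j+(k-j)=k\ge p$; setting $j':=p-(k-j)\le j$ and invoking monotonicity yields $v_p(g_jg_{k-j})\ge v_p(g_{j'}g_{k-j})=3$, as needed. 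The only real obstacle is the bookkeeping for $v_p(g_j)+v_p(g_{p-j})$, which is a short Hermite-type fractional-part calculation; once the closed form for $v_p(g_j)$ is in hand the rest is essentially immediate.
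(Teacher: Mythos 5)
Your proof is correct and follows essentially the same route as the paper: both arguments rest on the $p$-adic valuation of $g_k$ for $1\le k\le p-1$ (the paper asserts ``by examination'' that $v_p(g_k)\ge 1,2,3$ on the ranges $(p/6,p/2)$, $(p/2,5p/6)$, $(5p/6,p)$, which is exactly your closed form $v_p(g_j)=\lfloor 3j/p+\tfrac12\rfloor$), combined with Wolstenholme for $g_p\is g_1$. Your Legendre--Hermite derivation and the exact identity $v_p(g_j)+v_p(g_{p-j})=3$ merely make explicit what the paper leaves as an inspection.
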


\begin{proof}
Let $p$ be a prime $>3$.
By examination of the coefficients $g_k=\binom{6k}{3k}\binom{3k}{k}\binom{2k}{k}$ one finds that $g_k$ is
divisible by $p$ when $p/2>k>p/6$, divisible by $p^2$ when $5p/6>k>p/2$ and divisible by $p^3$ when $p>k>5p/6$.
Consequently $F_p(t)^2$ equals modulo $p^3$ a polynomial of degree $<p$. So this must be $\tilde{F}_p(t)$.

Secondly, $\tilde{g}_p=\sum_{k=0}^p g_kg_{p-k}$. By our remark on divisibility of $g_k$ by powers
of $p$, we find that all terms of this summation are 0 modulo $p^3$,
except when $k=0,p$. This gives us $\tilde{g}_p\is 2g_p\mod{p^3}$.
The equality $\tilde{g}_1=2g_1$ is immediate. 

Thirdly, the congruence
$g_p\is g_1\mod{p^3}$ follows from Wolstenholme's theorem for binomial coefficients.
\end{proof}

\begin{proof}[Proof of Theorem \ref{thm:RamanujanVanHamme}]
Analogous to the proof of Theorem \ref{thm:functional3} we construct a polynomial $T_p(x,y)$
such that $T_p(E_4(\tau)/E_4(p\tau),t(\tau))=0$ and $T_p(x,y)\is A(y)x^p+B(y)x^{p+1}\mod{p^3}$
with $\deg(A)\le p/3$ and $\deg(B)\le(p+1)/3$. Hence, repeating the proof of Theorem
\ref{thm:functional3extend}, we find that
$$
\tilde{F}(t)\is \tilde{F}(t^\sigma)(\tilde{F}_p(t)+(\tilde{g}_p-\tilde{g}_1)t^p)\mod{p^3},
$$
where $\tilde{g}_k$ is the $k$-th coefficient of $\tilde{F}(t)$. Using Lemma \ref{lemma:fullmodular}
we see that the coefficient of $t^p$ is $0\mod{p^3}$. By the same lemma we also replace $\tilde{F}_p(t)$
by $F_p(t)^2$ modulo $p^3$. Hence we find
$$
F(t)^2\is F(t^\sigma)^2F_p(t)^2\mod{p^3}.
$$
As power series in $t$ we can conclude that $F(t)\is F(t^\sigma)F_p(t)\mod{p^3}$. For the specialization
step like in Corollary \ref{specialcongruence3} we need to retain the squared version. 
The analogue of this corollary yields a statement of the form  $F_p(t(\tau_0))^2\is\vartheta_p\mod{p^3}$,
where $\vartheta_p$ is the unit root value among $E_4(\tau_0)/E_4(p\tau_0)$ and
$p^4E_4(\tau_0)/E_4((\tau_0+k)/p)$ with $k=0,\ldots,p-1$. 

For any $p$ that splits in $\Q(\sqrt{-D})$ we write $p=|c\omega_D+d|^2$. Note $\wp=c\omega_D+d$.
Choose $a,b\in\Z$ such that $ad-bc=1$ and set $\tau_0=\frac{a\omega_D+b}{c\omega_D+d}$. 
Clearly $t(\tau_0)=t(\omega_D)$ and $E_4(\tau_0)=(c\omega_D+d)^4E_4(\omega_D)$. But also 
$p\tau_0=\omega_D+k$ for some integer $k$, hence $E_4(p\tau_0)=E_4(\omega_D)$. Thus we conclude
that $F(t(\omega_D))^2\is \wp^4\mod{p^3}$. The first congruence statement now follows. 
The Van Hamme-type congruence follows in the same way as in the proof of Theorem \ref{thm:VanHamme2}.
\end{proof}

\section{Appendix A}
Modular subgroups and their Hauptmoduln.
We use the standard notations
$$\eta_N=\eta(q^N)\quad\text{where}\quad\eta(q)=q^{1/24}\prod_{k>0}(1-q^k)$$
$$E_{2,N}=E_2(q)-NE_2(q^N)\quad\text{where}\quad E_2(q)=1-24\sum_{k>0}\frac{kq^k}{1-q^k}.$$
 
In the following listing we choose the Hauptmodul $h_N$ for each level $N$ 
in such a way that $h_N(q)=q+O(q^2)$ and $w_Nh_N(\tau)h_N(-1/N\tau))=1$ for some integer $w_N$. 

A slightly more extended list can be found in \cite[Table 2]{Ma09}. The cusps and
elliptic points are listed by their $h_N$-values. 
\medskip

\begin{center}
\begin{tabular}{|c|c|c|l|l|}
\hline
$N$ & $h_N$ & $w_N$ & cusps & elliptic points\\
\hline
$2$ & $\eta_2^{24}/\eta_1^{24}$ & $4096$ & $\infty,0$ & $-1/64$ (quadratic)\\
$3$ & $\eta_3^{12}/\eta_1^{12}$ & $729$ & $\infty,0$ & $-1/27$ (cubic)\\
$4$ & $\eta_4^8/\eta_1^8$ & $256$ & $\infty,-1/16,0$ & \\
$5$ & $\eta_5^6/\eta_1^6$ & $125$ & $\infty,0$ & $(-11\pm2\sqrt{-1})/125$ (quadratic)\\
$6$ & $\eta_2\eta_6^5/\eta_3\eta_1^5$ & $72$ & $\infty,0,-1/8,-1/9$ & \\
$7$ & $\eta_7^4/\eta_1^4$ & $49$ & $\infty,0$ & $(-13\pm3\sqrt{-3})/98$ (cubic)\\
$8$ & $\eta_2^2\eta_8^4/\eta_1^4\eta_4^2$ & $32$ & $\infty,-1/4,-1/8,0$ & \\
$9$ & $\eta_9^3/\eta_1^3$ & $27$ & $\infty,(-9\pm3\sqrt{3})/54,0$ & \\
$10$ & $\eta_2\eta_{10}^3/\eta_1^3\eta_5$ & $20$ & $\infty,-1/5,-1/4,0$ & $(-4\pm2\sqrt{-1})/20$
(quadratic)\\
$12$ & $\eta_2^2\eta_3\eta_{12}^3/\eta_1^3\eta_4\eta_6^2$ & $12$ &
6 cusps & \\
$18$ & $\eta_2\eta_3\eta_{18}^2/\eta_1^2\eta_6\eta_9$ & $6$ & 8 cusps & \\
\hline
\end{tabular}
\end{center}
\medskip

\section{Appendix B: second order equations}
In this section we list second order hypergeometric differential equations
whose monodromy group is defined over $\Z$, and Ap\'ery-like equations whose numbering is
taken from Zagier's paper \cite{Zagier07}. We also list the mirror map and the
associated form $\Theta$. In some entries we added the adjective 'twisted'.
This means that we deviated from the standard literature by choosing the coefficients
$g_n$ as $(-1)^n$ times the standard coefficients. This simplies the modular group
in such cases. Furthermore we list (hopefully complete) rational 
CM-values, for use as argument in the supercongruence applications. 
Note that in the case hypergeometric $F(1/4,3/4;1|64t)$
the function $\Theta$ is not a modular form (its square is).  In that case
one still finds many supercongruences, which can be shown in a manner parallel
to the proof of Theorem \ref{thm:RamanujanVanHamme}. Note that for hypergeometric
$F(1/6,5/6;1|864t)$ the function $t$ is not modular.

\subsection{Hypergeometric $F(1/2,1/2,1|16t)$}
Differential operator $\theta_t^2 - 4t (2\theta_t+1)^2$
$$
\text{Coefficients:}\quad g_n=\binom{2n}{n}^2
$$
$$\text{Mirror map: }t(q)=\frac{\eta_4^{16}\eta_1^8}{\eta_2^{24}}=\frac{h_4}{1+16h_4}=-h_4(-q),
\quad\text{pole: }1/2$$
$$\Theta(q)=\theta_S=\frac{\eta_2^{10}}{\eta_1^4\eta_4^4},\quad\text{Group: }\Gamma_0(4)$$
$$t(q)\Theta(q)^2=\frac{\eta_4^8}{\eta_2^4}
=\left(\sum_{n\in\Z}q^{n(n+1)/2}\right)^4$$
$$\text{Cusp values:}\quad t(0)=1/16,\quad t(1/2)=\infty$$
$$\text{Rational CM-values:}\quad t(i/2)=\frac{1}{32},\quad t((i+1)/2)=-\frac{1}{16},
\quad t((1+i)/4)=\frac{1}{8}$$
$$\text{Some quadratic CM-values:}\quad t(i)=(1-\sqrt{2})^4/16,\quad t((2+i)/5)=(1+\sqrt{2})^4/16$$
$$t(\sqrt{-2}/2)=(1-\sqrt{2})^2/16,\quad t((2+\sqrt{-2})/6)=(1+\sqrt{2})^2/16$$

\subsection{Hypergeometric $F(1/3,2/3,1|27t)$}
Differential operator $\theta_t^2 - 3t (3\theta_t+1)(3\theta_t+2)$
$$
\text{Coefficients:}\quad g_n=\frac{(3n)!}{n!^3}
$$
$$t(q)=\frac{h_3}{1+27h_3},\quad\text{pole:}\quad(3+\sqrt{-3})/6$$
$$\Theta(q)=\theta_H=1+6\sum_{n\ge1}\chi_{-3}(n)\frac{q^n}{1-q^n},\quad\text{Group: }\Gamma_0(3)$$
$$t(q)\Theta(q)^3=\frac{\eta_3^9}{\eta_1^3}=
\sum_{n\ge1}\sum_{d|n}d^2\chi_{-3}(n/d)q^n$$

$$\text{Rational cusp values and CM-values:}\quad t(0)=\frac{1}{27},
\quad t(\sqrt{-3}/3)=\frac{1}{54},$$
$$t((1+\sqrt{-3})/2)=-\frac{1}{216},\quad t((-1+\sqrt{-3})/6)=\frac{1}{24}$$

\subsection{Hypergeometric $F(1/4,3/4,1|64t)$}

Differential operator $\theta_t^2 - 4t (4\theta_t+1)(4\theta_t+3)$

$$t(q)=\frac{h_2}{1+64h_2}\quad\text{pole:}\quad (1+i)/2$$

$$\Theta(q)^2=-E_{2,2}$$

$$t(q)\Theta(q)^4=\sum_{n\ge1}\sum_{d|n}d^3\chi_2(n/d)q^n,\quad\text{where $\chi_2(m)=1$ if $m$ is odd,
and $0$ if $m$ is even}$$
$$\text{Rational CM-values:}\quad t((1+\sqrt{-7})/2)=-\frac{1}{4032}
\quad t((-1+\sqrt{-7})/8)=\frac{1}{63},$$
$$t(i)=\frac{1}{576},\quad t(i/2)=\frac{1}{72},\quad t(\sqrt{-2}/2)=\frac{1}{128},$$
$$t((1+\sqrt{-3})/2)=-\frac{1}{192},\quad t((-1+\sqrt{-3})/4)=\frac{1}{48}$$

\subsection{Hypergeometric $F(1/6,5/6,1|864t)$}

Differential operator $\theta_t^2 - 24t (6\theta_t+1)(6\theta_t+5)$

$$t(q)=\frac{E_4^{3/2}-E_6}{864E_4^{3/2}}$$

$$J=\frac{1}{t(q)-432t(q)^2},\quad\Theta(q)^4=E_4$$

\subsection{Zagier A (Franel numbers)}
Differential operator $\theta_t^2 - t(7\theta_t^2 + 7\theta_t + 2)-8t^2(\theta_t + 1)^2$
$$
\text{Coefficients:}\quad g_n=\sum_{k=0}^n\binom{n}{k}^3
$$
$$\text{Mirror map:}\quad t(q)=\frac{\eta_1^3\eta_6^9}{\eta_2^3\eta_3^9}=\frac{h_6}{1+8h_6},
\quad\text{Group: }\Gamma_0(6)$$ 
$$\Theta(q)=\frac13\theta_H(q)+\frac23\theta_H(q^2)=\frac{\eta_2\eta_3^6}{\eta_1^2\eta_6^3}$$
$$\text{Cusp values:}\quad t(0)=\frac18,\quad t(1/2)=-1$$
$$\text{Rational CM-values:}\quad t((3+\sqrt{-3})/6)=-\frac14,\quad t((3+\sqrt{-3}/12)=\frac12$$

\subsection{Zagier B} (twisted)
Differential operator $\theta_t^2 + t(9\theta_t^2 + 9\theta_t + 3) + 27t^2(\theta_t + 1)^2$
$$
\text{Coefficients:}\quad g_n=\sum_{k=0}^{\lfloor n/2\rfloor}(-3)^{n-3k}\frac{n!}{(n-3k)!k!^3}
$$
$$\text{Mirror map:}\quad t(q)=\frac{\eta_9^3}{\eta_1^3}=h_9,\quad\text{Group: }\Gamma_0(9)$$
$$\Theta(q)=\frac{\eta_1^3}{\eta_3}=-\frac12\theta_H(q)+\frac32\theta_H(q^3)$$

$$\text{Rational CM-value:}\quad t((3+\sqrt{-3})/6)=\frac19\quad t((3+\sqrt{-3})/18)=\frac13$$

\subsection{Zagier C}
Differential operator $\theta_t^2 - t(10\theta_t^2 + 10\theta_t + 3) + 9t^2(\theta_t + 1)^2$
$$
\text{Coefficients:}\quad g_n=\sum_{k=0}^n\binom{n}{k}^2\binom{2k}{k}
$$
$$\text{Mirror map:}\quad t(q)=\left(\frac{\eta_6^2\eta_1}{\eta_2^2\eta_3)}\right)^4=\frac{h_6}{1+9h_6}$$
$$\Theta(q)=\frac{\eta_3\eta_2^6}{\eta_1^3\eta_6^2},\quad\text{Group: }\Gamma_0(6)$$
$$\text{Cusp values:}\quad t(0)=1/9,\quad t(1/3)=1$$
$$\text{Rational CM-values:}\quad t((3+\sqrt{-3})/6)=-\frac13,\quad t((3+\sqrt{-3}/12)=\frac13$$

\subsection{Zagier D (Ap\'ery numbers)}
Differential operator $\theta_t^2 - t (11 \theta_t^2 + 11 \theta_t + 3) - t^2 (\theta_t+1)^2 $
$$
\text{Coefficients:}\quad g_n=\sum_{k=0}^n\binom{n}{k}^2\binom{n+k}{k}
$$
$$\text{Mirror map:}\quad t(q)=q\prod_{n>0}(1-q^n)^{5\left(\frac{n}{5}\right)},\quad\text{Group: }\Gamma_1(5)$$
$$\Theta(q)=1+\frac12\sum_{k>0}((3-i)\chi(k)+(3+i)\overline\chi(k))\frac{q^k}{1-q^k},\quad \chi(2)=i$$

$$t(q)\Theta(q)^2=\frac{\eta_5^5}{\eta_1}$$

\subsection{Zagier E}
Differential operator $\theta_t^2 - t(12\theta_t^2 + 12\theta_t + 4) + 32t^2(\theta_t + 1)^2$
$$
\text{Coefficients:}\quad g_n=\sum_{k=0}^{\lfloor n/2\rfloor}4^{n-2k}\binom{n}{2k}\binom{2k}{k}^2
$$
$$\text{Mirror map:}\quad t(q)=\left(\frac{\eta_1^2\eta_4\eta_8^2}{\eta_2^5}\right)^2=\frac{h_8}{1+8h_8}$$ 
$$\Theta(q)=\theta_S=\frac{\eta_2^{10}}{\eta_1^4\eta_4^4},\quad\text{Group: }\Gamma_0(8)$$
$$\text{Cusp values:}\quad t(0)=\frac18,\quad t(1/4)=\frac14$$

\subsection{Zagier F}(twisted)
Differential operator: $\theta^2 + t(17\theta_t^2 + 17\theta_t + 6) + 72t^2(\theta_t + 1)^2$
$$
\text{Coefficients:}\quad g_n=\sum_{k=0}^n(-8)^{n-k}\binom{n}{k}\sum_{j=0}^k\binom{k}{j}^3
$$
$$\text{Mirror map:}\quad t(q)=\frac{\eta_6^5\eta_2}{\eta_1^5\eta_3}=h_6,\quad\text{Group: }\Gamma_0(6)$$
$$\Theta(q)=\frac{\eta_1^6\eta_6}{\eta_2^3\eta_3^2}=-\theta_H(q)+2\theta_H(q^2)$$
$$\text{Cusp values:}\quad t(1/2)=-\frac19,\quad t(1/3)=-\frac18$$
$$\text{Rational CM-values:}\quad t((3+\sqrt{-3})/6)=-\frac{1}{12},
\quad t((3+\sqrt{-3})/12)=-\frac16$$

\section{Appendix C: third order differential equations}
This section lists third order equations and their corresponding mirror maps,
modular forms and rational CM-values taken from the following sources.
First of all the hypergeometric equations, whose monodromy group is defined over $\Z$.
Then a list of Ap\'ery like equations
which are described in Almkvist and Zudilin's paper \cite{AZ06}. Their numbering 
by Greek letters is adopted in this paper. In \cite{AZ06} there is also an extensive description
of their relation with second order equations. Furthermore we listed the sporadic 
sequences Cooper $s_7,s_{10},s_{18}$ found by Cooper in \cite{Cooper12}.
The last two examples are Hadamard
products of the central binomial coefficient $\binom{2n}{n}$ with the
coefficients of the solution of a second order equation. We take the Ap\'ery numbers
and Franel numbers for the latter. The corresponding supercongruences are quite popular
in Z.W.Sun's list \cite{ZW-Sun19} and the papers \cite{ZH-Sun18} and \cite{ZH-Sun21}.

Many of the formulas in this section can also be found in Cooper's book \cite{Cooper17}.
We remark that in the case hypergeometric $F(1/6,1/2,5/6;1,1|1728t)$ the function $\Theta$ is
not modular (but its square is). Nevertheless there are some very interesting supercongruences
associated to it, see Theorem \ref{thm:RamanujanVanHamme}.

\subsection{Hypergeometric $F(1/2,1/2,1/2;1,1|64t)$}
Differential operator: $\theta^3-4t(2\theta+1)^3$
$$
\text{Coefficients:}\quad g_n=\binom{2n}{n}^3.
$$
$$
\text{Mirror map:}\quad t(q)=\left(\frac{\eta_1\eta_4}{\eta_2^2}\right)^{24}=
\frac{h_4}{(1+16h_4)^2},
\quad\text{pole: }1/2
$$
$$
\Theta(q)=\theta_S^2,\quad t\Theta^4=\left(\frac{\eta_1\eta_4}{\eta_2}\right)^8,
\quad\text{Group: }\Gamma_0(4)+\left<4\right>
$$
$$
\text{Rational CM-values:}\quad t(i/2)=\frac{1}{64},\quad t((1+\sqrt{-2})/2)=-\frac{1}{64},
\quad t(\sqrt{-3}/2)=\frac{1}{256},
$$
$$
t((1+\sqrt{-3})/4)=\frac{1}{16},\quad t((1+i)/2)=-\frac12,\quad t(1/2+i)=-\frac{1}{512},
$$
$$
t((3+\sqrt{-7})/8)=1,\quad t(\sqrt{-7}/2)=\frac{1}{4096}
$$

\subsection{Hypergeometric $F(1/4,1/2,3/4;1,1|256t)$}\ \\
Differential operator: $\theta^3-8t(4\theta+1)(2\theta+1)(4\theta+3)$
$$
\text{Coefficients:}\quad g_n=\frac{(4n)!}{n!^4}.
$$
$$
\text{Mirror map:}\quad t(q)=\frac{h_2}{(1+64h_2)^2}\quad\text{pole: $(1+i)/2$}
$$
$$
\Theta(q)=-E_{2,2},\quad t\Theta^4=(\eta_1\eta_2)^8,\quad\text{Group: }\Gamma_0(2)+\left<2\right>
$$
$$
\text{Rational CM-values:}\quad t(\sqrt{-1/2})=\frac{1}{256},
\quad t((1+\sqrt{-5})/2)=-\frac{1}{1024},\quad t(\sqrt{-3/2})=\frac{1}{2304},
$$
$$
t((1+\sqrt{-3})/2)=-\frac{1}{144}, \quad t((1+3i)/2)=-\frac{1}{17088},
\quad t(i)=\frac{1}{648},
$$
$$
t(\sqrt{-5/2})=\frac{1}{20736},\quad t((1+\sqrt{-7})/4)=\frac{1}{81},
\quad t((1+\sqrt{-13})/2)=-\frac{1}{82944},
$$
$$
t((1+\sqrt{-7})/2)=-\frac{1}{3969},\quad t(3\sqrt{-2}/2)=\frac{1}{614656},
\quad t(\sqrt{-11/2})=\frac{1}{9801},
$$
$$
t((1+5i)/2)=-\frac{1}{6635520},\quad t((1+\sqrt{-37})/2)=-\frac{1}{199148544},
\quad t(\sqrt{-29/2})=\frac{1}{24591257856}
$$

\subsection{Hypergeometric $F(1/3,1/2,2/3;1,1|108t)$}\ \\
Differential operator: $\theta^3-6t(3\theta+1)(2\theta+1)(3\theta+2)$
$$
\text{Coefficients:}\quad g_n=\frac{(3n)!(2n)!}{n!^5}.
$$
$$
\text{Mirror map:}\quad t(q)=\frac{h_3}{(1+27h_3)^2}\quad\text{pole: $(3+\sqrt{-3})/6$}
$$
$$
-2\Theta(q)=E_{2,3},\quad t\Theta^3=(\eta_1\eta_3)^6,\quad\text{Group: }\Gamma_0(3)+\left<3\right>
$$
$$
\text{Rational CM-values:}\quad t(\sqrt{-3}/3)=\frac{1}{108},
\quad t((3+\sqrt{-15})/6)=-\frac{1}{27},\quad t(\sqrt{-6}/3)=\frac{1}{216},
$$
$$ 
t((1+\sqrt{-2})/3)=\frac{1}{8},
\quad t((1+\sqrt{-17/3})/2)=-\frac{1}{1728},\quad t((1+\sqrt{-3})/2)=-\frac{1}{192},$$
$$t(1+\sqrt{-11})/6)=\frac{1}{1024},\quad t(1+5\sqrt{-1/3})/2)=-\frac{1}{8640},
\quad t(\sqrt{-5/3})=\frac{1}{3375},$$
$$t((1+\sqrt{-41/3})/2)=-\frac{1}{110592},
\quad t((1+7\sqrt{-1/3})/2)=-\frac{1}{326592}
$$

\subsection{Hypergeometric $F(1/6,1/2,5/6;1,1|1728t)$}\ \\
Differential operator: $\theta^3-24t(6\theta+1)(2\theta+1)(6\theta+5)$
$$
\text{Coefficients:}\quad g_n=\frac{(6n)!}{(3n)!n!^3}.
$$
$$
\text{Mirror map:}\quad t(q)=\frac{1}{J},\quad\text{Group: }\Gamma_0(1)
$$
$$
\Theta(q)^2=E_4,\quad t\Theta^6=12^3\Delta
$$

\subsection{Ap\'ery$_3$, Almkvist ($\gamma$)}\ \\
Differential operator: $(1 - 34 t + t^2) \theta^3+(-51 t + 3 t^2) \theta^2+ (-27 t + 3 t^2) \theta -5 t + t^2 $ 
$$
\text{Coefficients:}\quad g_n=\sum_{k=0}^n\binom{n}{k}^2\binom{n+k}{k}^2.
$$
$$\text{Mirror map:}\quad 
t(q)=\left(\frac{\eta_1\eta_6}{\eta_2\eta_3}\right)^{12}=\frac{h_6}{(1+8h_6)(1+9h_6)},
\quad\text{pole: }1/2$$
$$\Theta(q)=\frac{(\eta_2\eta_3)^7}{(\eta_1\eta_6)^5},\quad\text{Group: }\Gamma_0(6)+\left<6\right>$$
$$
\text{Rational CM-values:}\quad t((2+\sqrt{-2})/6)=1,\quad t((3+\sqrt{-3})/6)=-1
$$

\subsection{Almkvist ($\eta$)}(twisted)\ \\
Differential operator: 
$(1 + 22 t + 125 t^2) \theta^3 +(33 t + 375 t^2) \theta^2 +(21 t + 375 t^2) \theta + 5 t + 125 t^2	$
$$
\text{Coefficients:}\quad g_n=\sum_{k=0}^n(-1)^{n-k}\binom{n}{k}^3\binom{4n-5k}{3n}.
$$
$$\text{Mirror map:}\quad t(q)=\left(\frac{\eta_5}{\eta_1}\right)^6$$
No rational CM-values found.
$$\Theta(q)=\frac{\eta_1^5}{\eta_5},\quad\text{Group: }\Gamma_0(5)$$

\subsection{Domb, Almkvist ($\alpha$)}(twisted)\ \\
Differential operator: 
$(1 + 20 t + 64 t^2) \theta^3+(30 t + 192 t^2) \theta^2+(18 t + 192 t^2)
\theta + 4 t + 64 t^2$
$$
\text{Coefficients:}\quad g_n=(-1)^n\sum_{k=0}^n\binom{n}{k}^2\binom{2k}{k}\binom{2n-2k}{n-k}.
$$

$$\text{Mirror map:}\quad t(q)=\left(\frac{\eta_2\eta_6}{\eta_1\eta_3}\right)^{6}=
\frac{h_6(1+9h_6)}{1+8h_6}$$

$$\Theta(q)=\sum_{x,y,z,u\in\Z}(-q)^{x^2+y^2+3z^2+3u^2}=
\frac{\eta_1^4\eta_3^4}{\eta_2^2\eta_6^2},\quad\text{Group: }\Gamma_0(6)+\left<3\right>$$
$$
\text{Rational CM-values:}\quad t((9+\sqrt{-15})/24)=1,\quad t((3+\sqrt{-3})/6)=-\frac12,
\quad t(3+\sqrt{-3})/12)=\frac14,
$$
$$
t(1+\sqrt{-2})/6)=\frac18,\quad t((1+\sqrt{-2/3})/2)=-\frac18,\quad t(\sqrt{-3}/6)=\frac{1}{16},
$$
$$
t((1+\sqrt{-4/3})/2)=-\frac{1}{32},\quad t(\sqrt{-15}/30)=\frac{1}{64}
$$

\subsection{Almkvist-Zudilin, Almkvist ($\delta$)}(twisted)\ \\
Differential operator: 
$(1 + 14 t + 81 t^2) \theta^3 + (21 t + 243 t^2) \theta^2 + (13 t + 243 t^2) 
\theta + 3 t + 81 t^2$
$$
\text{Coefficients:}\quad g_n=\sum_{k=0}^{\lfloor n/3\rfloor}(-3)^{n-3k}\frac{(n+k)!}{k!^4(n-3k)!}.
$$

$$\text{Mirror map:}\quad t(q)=\left(\frac{\eta_3\eta_6}{\eta_1\eta_2}\right)^4
=\frac{h_6(1+8h_6)}{1+9h_6}$$
$$\Theta(q)=\frac{\eta_1^3\eta_2^3}{\eta_3\eta_6},\quad\text{Group: }\Gamma_0(6)+\left<2\right>$$
$$
\text{rational CM-values:}\quad t((3+\sqrt{-2})/6)=-1,\quad t((2+i)/6)=\frac13,
\quad t((3+\sqrt{-3})/12)=\frac19,
$$
$$
t((1+\sqrt{-2/3})/2)=-\frac19, \quad t(i/2)=\frac{1}{27},\quad t((1+\sqrt{-2})/2)=-\frac{1}{81}
$$

\subsection{Almkvist ($\epsilon$)}\ \\
Differential operator: 
$(1 - 24 t + 16 t^2) \theta^3 + (-36 t + 48 t^2) \theta^2 + (-20 t + 48 t^2)\theta -4 t + 16 t^2$
$$
\text{Coefficients:}\quad g_n=\sum_{k=\lceil n/2\rceil}^n\binom{n}{k}^2\binom{2k}{n}^2.
$$

$$\text{Mirror map:}\quad t(q)=\left(\frac{\eta_1\eta_8}{\eta_2\eta_4}\right)^8
=\frac{h_8}{(1+4h_8)(1+8h_8)},\quad\text{pole: }1/2$$
$$\Theta(q)=\frac{\eta_2^6\eta_4^6}{\eta_1^4\eta_8^4},\quad\text{Group: }\Gamma_0(8)+\left<8\right>$$
$$
\text{Rational CM-values:}\quad t((5+\sqrt{-7})/16)=1,\quad t((2+\sqrt{-2})/4)=-\frac14,
$$
$$
t((1+i)/4)=\frac14,\quad t((1+\sqrt{-7})/8)=\frac{1}{16}
$$

\subsection{Almkvist ($\zeta$)}\ \\
Differential operator: 
$(1 - 18 t - 27 t^2) \theta^3 + (-27 t - 81 t^2) \theta^2 + (-15 t - 81 t^2) \theta -3 t - 27 t^2$
$$
\text{Coefficients:}\quad g_n=\sum_{k,\ell}\binom{n}{k}^2\binom{n}{\ell}\binom{k}{\ell}\binom{k+l}{n}.
$$

$$\text{Mirror map:}\quad t(q)=\left(\frac{\eta_1\eta_9}{\eta_3^2}\right)^6
=\frac{h_9}{1+9h_9+27h_9^2},\quad\text{pole: }1/3$$
$$\Theta(q)=\frac{\eta_3^{10}}{\eta_1^3\eta_9^3},\quad\text{Group: }\Gamma_0(9)+\left<9\right>$$
$$
\text{Rational CM-values:}\quad t((5+\sqrt{-11})/18)=1,\quad t((4+\sqrt{-2})/9)=-1,
\quad t((1+\sqrt{-1/3})/2)=-\frac13,
$$
$$
t((1+\sqrt{-3})/6)=\frac19,\quad t(\sqrt{-2}/3)=\frac{1}{27},\quad t((3+\sqrt{-11})/6)=-\frac{1}{27}
$$

\subsection{Cooper s$_7$}\ \\
Differential operator: 
$(1 - 26 t - 27 t^2) \theta^3 + (-39 t - 81 t^2) \theta^2 + (-21 t - 78 t^2) \theta -4 t - 24 t^2$
$$
\text{Coefficients:}\quad g_n=\sum_{k=\ceil{n/2}}^n\binom{n}{k}^2\binom{n+k}{k}\binom{2k}{n}.
$$

$$t(q)=\frac{h_7}{1+13h_7+49h_7^2},\quad\text{pole: }(5+\sqrt{-3})/14$$
$$\Theta(q)=\left(1+2\sum_{k>0}\left(\frac{k}{7}\right)\frac{q^k}{1-q^k}\right)^2,
\quad t\Theta^{3/2}=(\eta_1\eta_7)^3,\quad\text{Group: }\Gamma_0(7)+\left<7\right>$$
$$
\text{Rational CM-values:}\quad t((1+\sqrt{-1/7})/2)=-1,\quad t((2+\sqrt{-3})/7)=\frac12,
\quad t((3+\sqrt{-19})/14)=\frac18,
$$
$$
t((1+\sqrt{-5/7})/2)=-\frac18,\quad t((1+3\sqrt{-3})/14)=\frac{1}{24},\quad t(\sqrt{-1/7})=\frac{1}{27},
$$
$$
t((1+\sqrt{-13/7})/2)=-\frac{1}{64},\quad t(\sqrt{-4/7})=\frac{1}{125},
\quad t((1+\sqrt{-61/7})/2)=-\frac{1}{10648}
$$

\subsection{Yang-Zudilin, Cooper s$_{10}$}\ \\
Differential operator: 
$(1 - 12 t - 64 t^2) \theta^3 + (-18 t - 192 t^2) \theta^2 + (-10 t - 188 t^2) \theta -2 t - 60 t^2$
$$
\text{Coefficients:}\quad g_n=\sum_{k=0}^n\binom{n}{k}^4.
$$

$$\text{Mirror map:}\quad t(q)=h_{10}\frac{(1+4h_{10})(1+5h_{10})}{(1+8h_{10}+20h_{10}^2)^2}$$
$$4\Theta(q)=E_{2,2}+E_{2,5}-E_{2,10},\quad t^3\Theta^4=(\eta_1\eta_2\eta_5\eta_{10})^4,
\quad\text{Group: }\Gamma_0(10)+\left<2,5\right>
$$
$$
\text{Rational CM-values:}\quad t((5+\sqrt{-15})/20)=1,
\quad t((2+i)/5)=-\frac12,\quad t((2+\sqrt{-6})/10)=\frac14,
$$
$$
t((1+\sqrt{-1/5})/2)=-\frac14,\quad t((1+\sqrt{-3/5})/2)=-\frac19,\quad t(1+3i)/10)=\frac{1}{12},
$$
$$
t(\sqrt{-1/10})=\frac{1}{16},\quad t((1+i)/2)=-\frac{1}{20},\quad t(\sqrt{-3/10})=\frac{1}{36},
$$
$$
t((1+\sqrt{-9/5})/2)=-\frac{1}{64},\quad t(\sqrt{-7/10})=\frac{1}{196},
\quad t((1+\sqrt{-17/5})/2)=-\frac{1}{324},
$$
$$
t(\sqrt{-13/10})=\frac{1}{1296},\quad t(\sqrt{-19/10})=\frac{1}{5776}
$$

\subsection{Cooper s$_{18}$}\ \\
Differential operator: 
$(1 - 28 t + 192 t^2) \theta^3 + (-42 t + 576 t^2) \theta^2 + (-26 t + 564 t^2) \theta -6 t + 180 t^2$
$$
\text{Coefficients:}\quad g_n=\sum_{k=0}^{\lfloor n/2\rfloor}\binom{n}{k}\binom{2k}{k}
\binom{2n-2k}{n-k}\left(\binom{2n-3k-1}{n}+\binom{2n-3k}{n}\right).
$$

$$
\text{Mirror map:}\quad t(q)=
\frac{h_{18}(1+2h_{18})(1+3h_{18})(1+3h_{18}+3h_{18}^2)(1+6h_{18}+12h_{18}^2)}
{(1 + 6 h_{18} + 6 h_{18}^2)^4}
$$
$$
4\Theta(q)=E_{2,2}+2E_{2,3}-2E_{2,6}-E_{2,9}+E_{2,18},\quad t^3\Theta^4=(\eta_3\eta_6)^8
$$
$$\text{Group: }\Gamma_0(18)+\left<2,9,w\right>\quad\text{where }w=\begin{pmatrix}2 & -1\\6 & -2\end{pmatrix}.$$
$$
\text{Rational CM-values:}\quad t((3+\sqrt{-5})/6)=-\frac14,\quad t((3+\sqrt{-7})/12)=\frac19,
\quad t((3+\sqrt{-7})/6)=-\frac19,
$$
$$
t((1+i)/6)=\frac{1}{12},\quad t(\sqrt{-2}/6)=\frac{1}{16},\quad t(i/3)=\frac{1}{18},
$$
$$
t(\sqrt{-10}/6)=\frac{1}{36},\quad t((3+\sqrt{-13})/6)=-\frac{1}{36},\quad t(\sqrt{-22}/6)=\frac{1}{144},
$$
$$
t(3+5i)/6)=-\frac{1}{180},\quad t((3+\sqrt{-37})/6)=-\frac{1}{576}
$$

\subsection{Ap\'ery$_2$ $\odot$ central}\ \\
Differential operator: 
$(1 - 44 t - 16t^2) \theta^3 + (-66 t - 48 t^2) \theta^2 +(-34 t - 44 t^2) -6 t - 12 t^2$
$$
\text{Coefficients:}\quad g_n=\binom{2n}{n}\sum_{k=0}^n\binom{n}{k}^2\binom{n+k}{k}.
$$

$$\text{Mirror map:}\quad t(q)=\frac{h_5}{1+22h_5+125h_5^2},\quad\text{pole: }(2+i)/5$$
$$-4\Theta(q)=E_{2,5},\quad t\Theta^2=(\eta_1\eta_5)^4,\quad\text{Group: }\Gamma_0(5)+\left<5\right>$$
$$
\text{Rational CM-values:}\quad t((3+\sqrt{-11})/10)=\frac14,\quad t((1+\sqrt{-3/5})/2)=-\frac13,
\quad t((1+\sqrt{-7/5})/2)=-\frac{1}{28},
$$
$$
t((1+\sqrt{-19})/10)=\frac{1}{36},\quad t((-1+2i)/5)=\frac{1}{18},\quad t(\sqrt{-2/5})=\frac{1}{72},
$$
$$
t(\sqrt{-3/5})=\frac{1}{588},\quad t((1+\sqrt{-23/5})/2)=-\frac{1}{828},
\quad t((1+\sqrt{-47/5})/2)=-\frac{1}{15228}
$$

\subsection{Franel $\odot$ central}\ \\
Differential operator: 
$(1 - 28 t - 128 t^2) \theta^3 + (-42 t - 384 t^2) \theta^2 + (-22 t - 352 t^2) \theta -4t - 96 t^2$
$$
\text{Coefficients:}\quad g_n=\binom{2n}{n}\sum_{k=0}^n\binom{n}{k}^3.
$$

$$\text{Mirror map:}\quad t(q)=\frac{h_6+17h_6^2+72h_6^3}{(1+16h_6+72h_6^2)^2},
\quad\text{Group: }\Gamma_0(6)+\left<2,3\right>
$$
$$
\Theta(q)=\frac{(\eta_2\eta_3)^7}{(\eta_1\eta_6)^5} - 
\frac{(\eta_1\eta_6)^7}{(\eta_2\eta_3)^5},\quad t\Theta^2=(\eta_1\eta_2\eta_3\eta_6)^2
$$
$$
\text{Rational CM-values:}\quad t((1+\sqrt{-1/3})/2)=-\frac14,\quad t((1+\sqrt{-5})/6=\frac{1}{16},
\quad t((1+i)/2)=-\frac{1}{16},
$$
$$
t((3+\sqrt{-15})/12)=\frac15,\quad t(\sqrt{-1/6})=\frac{1}{32},\quad t(\sqrt{-1/2})=\frac{1}{96},
$$
$$
t(\sqrt{-1/3})=\frac{1}{50},\quad t(1+\sqrt{-7/3})/2)=-\frac{1}{116},\quad t((1+\sqrt{-5/3})/2)=-\frac{1}{49},
$$
$$
t(\sqrt{-5/6})=\frac{1}{320},\quad t((1+\sqrt{-11/3})/2)=-\frac{1}{400},\quad t(\sqrt{-7/6})=\frac{1}{896},
$$
$$
t((1+\sqrt{-19/3})/2)=-\frac{1}{2704},\quad t(\sqrt{-13/6})=\frac{1}{10400},\quad t(\sqrt{-17/6})=\frac{1}{39200}
$$

\end{document}